\let\c@table\c@figure
\numberwithin{figure}{section}
\numberwithin{table}{section}
\newcommand{\lapl}{\Delta}
\newcommand{\grad}{\nabla}
\newcommand{\norm}[1]{\lVert#1\rVert}
\renewcommand{\S}{\mathbb{S}}
\renewcommand{\H}{\mathbb{H}}
\newcommand{\R}{\mathbb{R}}
\newcommand{\B}{\mathbb{B}}
\newcommand{\D}{\mathbb{D}}
\newcommand{\csch}{\text{csch}}
\renewcommand{\coth}{\text{coth}}
\newcommand{\si}{\text{si}}
\newcommand{\ta}{\text{ta}}
\newtheorem{theorem}{Theorem}[section]
\newtheorem{lemma}[theorem]{Lemma}
\newtheorem{conjecture}[theorem]{Conjecture}
\title[Scaling inequalities and limits]{Scaling inequalities and limits for Robin and Dirichlet eigenvalues}
\author{Scott Harman}
\email{sharma65@illinois.edu}
\address{University of Illinois, Urbana, IL 61801, USA}
\keywords{Laplace--Beltrami, spherical cap, hyperbolic disk, exterior Robin, spectral theory.}
\subjclass[2020]{\text{Primary 35P15}}
\begin{document}

\begin{abstract}
   For the Laplacian in spherical and hyperbolic spaces, Robin eigenvalues in two dimensions and Dirichlet eigenvalues in higher dimensions are shown to satisfy scaling inequalities analogous to the standard scale invariance of the Euclidean Laplacian. These results extend work of Langford and Laugesen to Robin problems and to Dirichlet problems in higher dimensions.
   In addition, scaled Robin eigenvalues behave exotically as the domain expands to a 2-sphere, tending to the spectrum of an exterior Robin problem.
\end{abstract}
\maketitle
\section{Introduction}
\subsection*{From Euclidean to spherical and hyperbolic}
In Euclidean space, we have the following scaling identity of eigenvalues for every $t \in \R$:
\[
t^2\lambda(t\Omega) = \lambda(\Omega)
\]
where $\lambda$ is a Dirichlet or Neumann Laplacian eigenvalue on a sufficiently regular domain $\Omega$. Robin eigenvalues $\lambda(\Omega, \alpha)$ with parameter $\alpha \in \R$ also satisfy a scale invariance provided one also scales the parameter, namely
\begin{equation}\label{robinEuclideanInvariant}
t^2 \lambda(t\Omega, \alpha/t) = \lambda(\Omega, \alpha).
\end{equation}
Equivalently, one can interpret these identities as the two functionals being constant for every value of $t$. However, when space is curved, the above identities fail, even after making sense of linear dilation on the new spaces. Our goal is to recover information about these functionals on spherical and hyperbolic spaces.

We will analyze the functionals
\[
\lambda_k(\Omega_t)S(t)^2
\]
where $\lambda_k$ is a Laplace--Beltrami eigenvalue, $\Omega_t$ is a spherical cap or geodesic ball, and $S(t)$ is some natural geometric scaling factor. For Robin eigenvalues, we will also look for natural normalizing factors $N(t)$ and their corresponding functionals
\[
\lambda_k(\Omega_t, \alpha / N(t))S(t)^2.
\]

The three main results in this paper are Robin monotonicity in spherical and hyperbolic space in two dimensions, Dirichlet monotonicity in spherical and hyperbolic space in higher dimensions, and computations of the limits for the Robin functionals. The limiting cases exhibit exotic behavior: as the spherical caps expand to fill up the entire sphere, the scaled Robin functional converges to a negative eigenvalue of the exterior Robin problem on the complement of a Euclidean disk.
\subsection*{Unifying spherical and hyperbolic notation}

Let $\S^n$ be the unit sphere of positive curvature $1$. We will employ the coordinate system $(\theta, \xi)$ where $\theta \in (0, \pi)$ is the angle of aperture from the north pole and $\xi \in \S^{n-1}$ represents the other angular components. Observe that any point $(\theta, \xi)$ has geodesic distance $\theta$ from the north pole. The domain of interest is
\[
C(\Theta) := \{(\theta, \xi) : 0 \leq \theta < \Theta, \xi \in \S^{n-1}\}
\]
where $0 < \Theta < \pi$, or geometrically, the spherical cap of aperture $\Theta$. We will consider the eigenvalue problem on $C(\Theta)$:
\[
-\lapl_{\S^n}u = \lambda u
\]
subject to Dirichlet, Neumann, or Robin boundary conditions.

We also consider hyperbolic space $\H^n$ of negative curvature $-1$. In this space, for $\Theta < 0$ we put $C(\Theta)$ as the geodesic disk of radius $|\Theta|$. There is the analogous coordinate system $(\theta, \xi)$ where $\theta \in (0, \infty)$ and $\xi \in \S^{n-1}$, and the corresponding eigenvalue problem
\[
-\lapl_{\H^n}u = \lambda u.
\]
Frequently we will denote the Laplace--Beltrami operator on these sets as
\[
\lapl_{C(\Theta)} := 
    \begin{cases}
     \lapl_{\H^n}, & -\infty < \Theta < 0, \\
     \lapl_{\S^n}, & 0 < \Theta < \pi.
    \end{cases}
\]

\subsection*{Two-dimensional Robin results}

Our goal is to quantify the effect that curving space has on the scaled eigenvalues, that is, how the geometry of the space interacts with its underlying analytic properties. In Euclidean $\R^n$, taking $\B^n$ as the unit ball, the constant function $t^2 \lambda(t \B^n)$ takes the appealing geometric form as an eigenvalue times the radius squared. In curved spaces such as $\S^n$ and $\H^n$, it is unclear what the ``scaling factor'' should be, i.e.,\ with what geometric quantity one should replace $t^2$.

In two dimensions, Langford and Laugesen in \cite[Theorem 1]{LL22} present results and scaling factors for Dirichlet and Neumann eigenvalues in two dimensions. A natural question to ask is whether analogous scaling results apply to Robin eigenvalues. We recall that the Robin problem is
\begin{gather*}
    \left\{
    \begin{aligned}
        & -\lapl_{C(\Theta)} u = \lambda u \quad \text{in } C(\Theta),\\
        & \frac{\partial u}{\partial n} + \alpha u = 0 \hspace{.9cm} \text{on } \partial C(\Theta),
    \end{aligned}
    \right.
\end{gather*}
where $\frac{\partial}{\partial n}$ is the outer unit normal derivative and $\alpha$ is a real-valued parameter. This problem induces a discrete spectrum of eigenvalues
\[
\lambda_1(\Theta, \alpha) < \lambda_2(\Theta, \alpha) \leq \lambda_3(\Theta, \alpha) \leq \dots \to \infty
\]
where $\lambda_k(\Theta, \alpha)$ is the $k^{\text{th}}$ Robin eigenvalue on $C(\Theta)$ with parameter $\alpha$.

The Euclidean scale invariance \eqref{robinEuclideanInvariant} suggests that the Robin parameter needs to be suitably scaled in the hyperbolic and spherical cases. Additionally, if the Robin parameter is negative, the spectrum now has negative eigenvalues. The Robin problem thus contrasts sharply with Neumann and Dirichlet problems, which have non-negative spectra. The negative eigenvalues of course tend to reverse the monotonicity of the functional, but the main interest for the negative eigenvalues is their peculiar limiting behavior. 

First, we handle positive Robin parameters (and hence positive Robin eigenvalues). A straightforward generalization of \cite[Theorem 1]{LL22} holds. The main adaptation for the Robin problem is to scale the Robin parameter $\alpha$ by the perimeter of the geodesic disk ($\sinh |\Theta|$) or by the perimeter of the spherical cap ($\sin \Theta$). At $\Theta = 0$, we define $\lambda_k(\D, \alpha)$ to be the $k^\text{th}$ Robin eigenvalue of the Euclidean unit disk $\D \subset \R^2$ with parameter $\alpha$.

\begin{theorem}[Positive eigenvalue monotonicity for Robin]\label{alphapos}
    Fix $n = 2$, $k \geq 1$, and $\alpha > 0$.

    \begin{enumerate}[(i)]
    \item The function \begin{equation}\label{tanhfunc}
        \Theta \mapsto 
        \begin{cases}
        \lambda_k(\Theta, \alpha/\sinh |\Theta|) \, 4 \tanh^2(\Theta/2), & \Theta \in (-\infty,0) , \\
        \lambda_k(\mathbb{D}, \alpha) , & \Theta=0 , \\
        \lambda_k(\Theta, \alpha/\sin \Theta) \, 4 \tan^2(\Theta/2) , & \Theta \in (0,\pi),
    \end{cases}
    \end{equation}
    increases strictly and continuously from $0$ to $\infty$. See Figure \ref{ta2fixedalpha}.
    \item The function \begin{equation}\label{sinhfunc}
        \Theta \mapsto 
        \begin{cases}
        \lambda_k(\Theta, \alpha/\sinh |\Theta|) \, \sinh^2(\Theta), & \Theta \in (-\infty,0) , \\
        \lambda_k(\mathbb{D}, \alpha), & \Theta=0 , \\
        \lambda_k(\Theta, \alpha/\sin \Theta) \, \sin^2(\Theta) , & \Theta \in (0,\pi),
        \end{cases}
    \end{equation}
    decreases strictly and continuously from $\infty$ to $0$. See Figure \ref{si2fixedalpha}.
    \end{enumerate}
\end{theorem}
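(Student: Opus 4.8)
The plan is to conformally flatten each curved cap onto a fixed Euclidean disk, so that all the $\Theta$-dependence of the Robin problem is pushed into a single scalar weight, and then read off monotonicity from pointwise monotonicity of that weight. In two dimensions the metrics on $\S^2$ and $\H^2$ are conformally flat: the substitution $r=\tan(\Theta/2)$ (spherical) or $r=\tanh(|\Theta|/2)$ (hyperbolic) sends $C(\Theta)$ to the Euclidean disk $D_R$ of radius $R=\tan(\Theta/2)$, respectively $R=\tanh(|\Theta|/2)$, with metric $\rho(|x|)^2|dx|^2$, where $\rho(r)^2=4(1+r^2)^{-2}$ on the sphere and $\rho(r)^2=4(1-r^2)^{-2}$ on the hyperbolic plane. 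Because the Dirichlet energy is conformally invariant in dimension two, the weak form of the Robin problem with scaled parameter $\alpha_\Theta=\alpha/\sin\Theta$ (resp.\ $\alpha/\sinh|\Theta|$) becomes
\[
\int_{D_R}\grad u\cdot\grad\phi\,dx+\alpha_\Theta\,\rho(R)\int_{\partial D_R}u\phi\,ds=\lambda\int_{D_R}u\phi\,\rho(|x|)^2\,dx .
\]
The crucial algebraic observation is $\sin\Theta=R\rho(R)$ and $\sinh|\Theta|=R\rho(R)$, so that $\alpha_\Theta\rho(R)=\alpha/R$; rescaling $x=Ry$ onto the unit disk $\D$ then collapses the boundary term to the $\Theta$-independent quantity $\alpha\int_{\partial\D}v^2\,ds$. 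This is exactly the adaptation needed to accommodate the Robin term.

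Carrying out the rescaling I expect to obtain, for the functional in (i),
\[
\lambda_k(\Theta,\alpha_\Theta)\,4R^2=\min_{\dim V=k}\ \max_{0\ne v\in V}\ \frac{\mathcal N[v]}{\int_{\D}v^2\,w_\Theta\,dy},\qquad \mathcal N[v]:=\int_{\D}|\grad v|^2\,dy+\alpha\int_{\partial\D}v^2\,ds ,
\]
with $4R^2=4\tan^2(\Theta/2)$ (resp.\ $4\tanh^2(\Theta/2)$) and weight $w_\Theta(y)=(1+R^2|y|^2)^{-2}$ on the sphere, $(1-R^2|y|^2)^{-2}$ on the hyperbolic plane. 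A one-line check shows that for each fixed $y\ne0$ the map $\Theta\mapsto w_\Theta(y)$ is strictly decreasing across the whole range $(-\infty,\pi)$, with $w_0\equiv1$ recovering $\lambda_k(\D,\alpha)$. Since $\alpha>0$ forces $\mathcal N[v]>0$ for every $v\ne0$, strict monotonicity follows from a Courant--Fischer comparison: for $\Theta_1<\Theta_2$ one has $w_{\Theta_1}>w_{\Theta_2}$ off the origin, hence $\int v^2 w_{\Theta_1}>\int v^2 w_{\Theta_2}$ for all $v\ne0$, and feeding the subspace optimal for $\Theta_2$ into the min--max for $\Theta_1$ shows the $\Theta_1$ value is strictly below the $\Theta_2$ value. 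For (ii) I would instead absorb $\sin^2\Theta=4R^2(1+R^2)^{-2}$ (resp.\ $\sinh^2|\Theta|$), producing the same min--max with the modified weight $\tilde w_\Theta(y)=\big((1\pm R^2)/(1\pm R^2|y|^2)\big)^2$; a sign check on $\partial_{R^2}\big((1\pm R^2)/(1\pm R^2|y|^2)\big)$ for $|y|<1$ gives that $\tilde w_\Theta$ is strictly increasing in $\Theta$, so the eigenvalues strictly decrease.

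Continuity on $(-\infty,\pi)$ follows from continuous dependence of $w_\Theta$ and $\tilde w_\Theta$ on $\Theta$, together with uniform two-sided bounds on each compact $\Theta$-interval, which make the min--max values vary continuously. The main obstacle I anticipate is the endpoint behaviour. The vanishing limits are comparatively soft: to reach $0$ in (i) as $\Theta\to-\infty$ (where $R\to1^-$ and $w_\Theta\to(1-|y|^2)^{-2}$ becomes non-integrable at $\partial\D$), one takes $k$ trial functions with disjoint supports concentrating at the boundary, forcing the denominator to blow up while $\mathcal N$ stays bounded; the corresponding limit $0$ for (ii) as $\Theta\to\pi^-$ is handled the same way, the weight $\tilde w_\Theta$ now concentrating at the origin. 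The blow-up limits are the delicate ones: as $\Theta\to\pi^-$ in (i) we have $R\to\infty$, $w_\Theta\to0$ a.e.\ with $\int_{\D}w_\Theta\,dy=\pi/(1+R^2)\to0$, and to conclude that the $k$th value tends to $\infty$ one must upgrade the pointwise vanishing of the denominator to a bound uniform over the unit sphere of every competing $k$-dimensional subspace, via a compactness argument in $H^1(\D)$; the limit $\infty$ for (ii) as $\Theta\to-\infty$ requires the same uniform control. Making these blow-up limits rigorous is where I expect the real work to lie, the monotonicity itself being essentially immediate once the weighted reformulation is in place.
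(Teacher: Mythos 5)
Your reduction is exactly the paper's: stereographic/hyperbolic flattening, the identity $\si(\Theta)=R\sqrt{w_\pm(R)}$ collapsing the Robin term to the $R$-independent quantity $\alpha\int_{\partial\D}v^2\,ds$, and pointwise strict monotonicity in $\Theta$ of the weights $(1\pm R^2r^2)^{-2}$ and $\bigl((1\pm R^2)/(1\pm R^2r^2)\bigr)^2$ fed through Courant--Fischer. (The paper delegates the weight analysis to Langford--Laugesen's Proposition 9; you verify it by hand, which is equivalent.) The monotonicity and continuity parts of your proposal are sound. The gaps are in the endpoint limits, and there are two of them.

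First, the vanishing limit in (ii) as $\Theta\to\pi$ is \emph{not} ``handled the same way'' as the boundary-concentration argument in (i). There the limiting weight $(1-r^2)^{-2}$ blows up non-integrably on a full annulus, so $k$ fixed, disjointly supported functions equal to $1$ near disjoint boundary arcs work. Here the limiting weight is $r^{-4}$, non-integrable only at the single point $y=0$, and fixed disjointly supported trial functions cannot all produce divergent denominators: if the supports are disjoint sectors with vertex at the origin, any function bounded away from zero near the origin inside its sector has \emph{infinite} Dirichlet energy in two dimensions (the angular derivative contributes $\int_0 r^{-1}\,dr=\infty$); if instead the supports are disjoint annuli avoiding the origin, the weighted norms stay bounded as $R\to\infty$ and the quotients converge to finite nonzero limits. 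So for $k\geq 2$ your sketch fails. It can be repaired with $R$-dependent trial functions $v_j(y)=\psi_j(Ry)$, $\psi_j$ supported in disjoint annuli, since conformal invariance keeps the numerators fixed while the denominators grow like $R^2$; the paper instead avoids the issue entirely by the comparison $0\leq\lambda_k(\Theta,\alpha/\sin\Theta)\sin^2\Theta\leq\lambda_k(\Theta)\sin^2(\Theta)$ with the Dirichlet eigenvalue and quoting the known Dirichlet limit.

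Second, the two blow-up limits ((i) as $\Theta\to\pi$, (ii) as $\Theta\to-\infty$) are where you correctly locate the real difficulty, but you do not supply the argument. The statement needed is precisely the paper's Lemma 3.1: for weights $0<w_R\leq1$ with $w_R\to0$ a.e., the positive Robin eigenvalues of $-w_R^{-1}\Delta$ tend to $\infty$. Its proof is not a routine compactness remark: one truncates the weight below by $\delta$, normalizes eigenfunctions in $L^2$, derives an $R$-uniform $H^1$ bound, applies Rellich--Kondrachov together with compactness of the trace operator, passes to the limit in the weak eigenvalue equation to identify the limit function as an eigenfunction of the \emph{unweighted} Robin Laplacian, and concludes $\liminf_R \lambda_k(w_R,\alpha)\geq\delta^{-1}\lambda_N(1,\alpha)$ with $\delta$ arbitrarily small. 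Your one-sentence description names the right tool (a uniform bound over all competing $k$-dimensional subspaces via compactness in $H^1(\D)$) but none of this structure, so as written the proposal establishes strict monotonicity and continuity but not the claimed limiting values $0$ and $\infty$.
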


The function $\eqref{tanhfunc}$ has the geometric interpretation of an eigenvalue times stereographic radius squared, where the stereographic transformation is detailed in \eqref{stereo}. The function $\eqref{sinhfunc}$ has the interpretation of an eigenvalue times perimeter squared.

Langford and Laugesen showed the Neumann case $\alpha = 0$ in \cite{LL22}. For the Dirichlet case (i.e.,\ $\alpha \to \infty$) the theorem was also proven in \cite{LL22}; in that case the function \eqref{tanhfunc} increases from $1$ to $\infty$.

For negative Robin parameters in the next theorem, we encounter more obstacles. The eigenvalue can turn negative, which reverses monotonicity and necessitates different techniques for the limiting values. In particular, a negative eigenvalue causes strange limiting behavior towards a seemingly unrelated unbounded Robin problem. 

We define $\lambda_k^\text{ext}(\D, \alpha)$
as the $k^\text{th}$ Robin eigenvalue on the exterior of the Euclidean unit disk $\D$. (The full definition of the PDE system is \eqref{exteriorpde}. More information on exterior Robin problems can also be found in \cite{KL18, KL20, KL23}.) We show that this exterior eigenvalue arises as a limit when the Robin eigenvalue is negative.

\begin{theorem}[Negative eigenvalue monotonicity for Robin]\label{alphaneg}
    Fix $n = 2$, $k \geq 1$, and $\alpha < 0$ where $\alpha \in [-m - 1, -m)$ and $m$ is a non-negative integer.

    \begin{enumerate}[(i)]
    \item If $k \leq 2m + 1$, then the eigenvalue  function \eqref{tanhfunc} is negative and decreases strictly and continuously from $0$ to $-\infty$ as a function of $\Theta \in (-\infty, \pi)$. If $k = 2m+2, 2m+3$, and $\alpha = -m-1$, then $\lambda_{2m+2} = \lambda_{2m+3} = 0$ for all $\Theta$ and \eqref{tanhfunc} is identically zero. Otherwise, $\lambda_k > 0$ for all $\Theta$ and Theorem \ref{alphapos}(i) continues to hold. See Figure \ref{ta2fixedalpha}.
    \\
    \item If $k \leq 2m + 1$, then the eigenvalue function \eqref{sinhfunc} is negative and increases strictly and continuously from $-\infty$ to $\lambda_k^\text{ext}(\D, \alpha)$ as a function of $\Theta \in (-\infty, \pi)$. If $k = 2m+2, 2m+3$, and $\alpha = -m-1$, then $\lambda_{2m+2} = \lambda_{2m+3} = 0$ for all $\Theta$ and \eqref{tanhfunc} is identically zero. Otherwise, $\lambda_k > 0$ for all $\Theta$ and Theorem \ref{alphapos}(ii) continues to hold. See Figure \ref{si2fixedalpha}.
    \end{enumerate}
\end{theorem}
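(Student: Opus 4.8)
The plan is to reuse the conformal change of variables behind Theorem~\ref{alphapos} and then read off the sign of the eigenvalue from the min--max principle. Since stereographic projection is conformal, projecting $C(\Theta)$ onto the Euclidean unit disk $\D$ and rescaling to unit radius turns the problem with scaled parameter $\alpha/\sin\Theta$ (resp.\ $\alpha/\sinh|\Theta|$) into a weighted Robin problem $-\lapl v = \lambda\,W_\Theta v$ on $\D$ whose quadratic form $Q(v)=\int_\D|\grad v|^2\,dA+\alpha\int_{\partial\D}v^2\,ds$ is \emph{independent} of $\Theta$; all $\Theta$-dependence is carried by a strictly positive weight $W_\Theta$. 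Writing $\lambda_k[W]$ for the $k$th eigenvalue of the weighted problem, one finds in the spherical range that the functional \eqref{tanhfunc} equals $\lambda_k[\hat W_\Theta]$ with the pointwise \emph{decreasing} weight $\hat W_\Theta(w)=(1+\tan^2(\Theta/2)\,|w|^2)^{-2}$, while \eqref{sinhfunc} equals $\lambda_k[\tilde W_\Theta]$ with the pointwise \emph{increasing} weight $\tilde W_\Theta(w)=(\cos^2(\Theta/2)+\sin^2(\Theta/2)\,|w|^2)^{-2}$; the hyperbolic weights are the analogues with $\cosh,\sinh$.

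The sign structure is then a $\Theta$-independent Morse-index count. Because $W_\Theta>0$, one has $\lambda_k[W_\Theta]<0$ if and only if $Q$ is negative on some $k$-dimensional subspace, which is a property of $Q$ alone, so the number of negative eigenvalues equals the Morse index of the fixed Euclidean-disk form $Q$. Separating variables on $\D$ shows this index is $2m+1$ for $\alpha\in[-m-1,-m)$, coming from the modes $\ell=0,1,\dots,m$ with multiplicities $1,2,\dots,2$; the borderline $\alpha=-m-1$ contributes a two-dimensional kernel spanned by the harmonic modes $r^{m+1}e^{\pm i(m+1)\phi}$, which satisfy $-\lapl v=0=0\cdot W_\Theta v$ and hence stay eigenfunctions with eigenvalue $0$ for every weight, giving $\lambda_{2m+2}=\lambda_{2m+3}=0$ for all $\Theta$. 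This yields the stated trichotomy; for indices beyond the negative ones the eigenvalue stays positive for every $\Theta$ (no integer $\ell$ lies in the open interval), and the argument of Theorem~\ref{alphapos} applies unchanged, since it uses only positivity of the eigenvalue, not of $\alpha$.

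For the monotonicity of the negative eigenvalues I would note that when $k\le 2m+1$ the optimal subspaces in the min--max for $\lambda_k[W_\Theta]$ lie in the negative cone of $Q$, where $Q<0$; there, dividing $Q(v)$ by the larger value of $\int_\D W_\Theta v^2$ produces the larger (less negative) quotient. Hence \eqref{tanhfunc} decreases strictly as $\hat W_\Theta$ decreases in $\Theta$, and \eqref{sinhfunc} increases strictly as $\tilde W_\Theta$ increases, with strictness and continuity (including across $\Theta=0$) following from the corresponding properties of the weights. Three of the four endpoint limits are then soft: as $\Theta\to-\infty$ the weight $\hat W_\Theta\to(1-|w|^2)^{-2}$ makes $\int_\D\hat W_\Theta v^2$ diverge, so \eqref{tanhfunc} tends to $0^-$, while $\tilde W_\Theta\to0$ forces \eqref{sinhfunc} to $-\infty$; and as $\Theta\to\pi$ the collapse $\hat W_\Theta\to0$ forces \eqref{tanhfunc} to $-\infty$.

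The remaining limit, in which \eqref{sinhfunc} tends to $\lambda_k^\text{ext}(\D,\alpha)$ as $\Theta\to\pi$, is the crux, and here I would work directly on the cap rather than in the disk model. A negative Robin eigenfunction concentrates on the shrinking boundary circle near the antipode, whose Euclidean radius is comparable to $\sin\Theta$; dilating a neighborhood of that circle by $1/\sin\Theta$ carries the cap to the exterior of the unit disk, rescales eigenvalues by $\sin^2\Theta$, and sends the parameter $\alpha/\sin\Theta$ to $\alpha$, so the rescaled problems should converge to the exterior Robin problem \eqref{exteriorpde}. The upper bound, that $\limsup_{\Theta\to\pi}$ of \eqref{sinhfunc} is at most $\lambda_k^\text{ext}(\D,\alpha)$, is routine by transplanting exterior quasimodes as trial functions. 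The matching lower bound is the main obstacle: I must rule out negative spectrum leaking from the boundary layer into the bulk of the almost-complete sphere, which I expect to handle through an a priori concentration and decay estimate for negative Robin eigenfunctions (the bulk carries only positive energy), yielding weak convergence of the rescaled eigenfunctions to a full system of exterior eigenfunctions and hence $\liminf_{\Theta\to\pi}$ of \eqref{sinhfunc} at least $\lambda_k^\text{ext}(\D,\alpha)$.
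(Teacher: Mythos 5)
Your reduction to a weighted problem on $\D$, the Morse-index count of the fixed form $Q$ (equivalent to the paper's Steklov argument in Lemma \ref{unifpos}, including the harmonic kernel $r^{m+1}e^{\pm i(m+1)\phi}$ at $\alpha=-m-1$), and the reversed monotonicity via restriction of the min--max to the negative cone of $Q$ all match the paper's own proof. The first genuine gap is the limit of \eqref{tanhfunc} as $\Theta\to-\infty$, which you list among the ``soft'' limits: your argument has the logic backwards. Divergence of $\int_\D \hat W_\Theta v^2$ for a \emph{fixed} $v$ only yields upper bounds in the min--max, and an upper bound by $0$ is trivial here since the eigenvalue is negative. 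What must be proved is the lower bound $\liminf \geq 0$, i.e.\ that $Q(v)/\int_\D \hat W_\Theta v^2 \geq -\epsilon$ \emph{uniformly} over all $v$ in the negative cone, whose mass may concentrate where the weight is still moderate. The paper needs real work for this: the ground state is radial, the domain is shrunk to the annulus $[R,1]$ by reversed monotonicity (Lemma \ref{negrobmono}), and a rescaling produces a one-dimensional mixed Robin--Neumann problem with parameter $(1-R)\alpha/R \to 0$, whose lowest eigenvalue tends to the Neumann value $0$. Your sketch could be completed differently (normalize the boundary trace, note $Q(v)\geq -|\alpha|$ on the negative cone, and show by a compactness/trace argument that $\inf\int_\D \hat W_\Theta v^2 \to\infty$ over that set, using that an $H^1$ function with nonzero trace has $\int_\D (1-r^2)^{-2}v^2\,dA=\infty$), but some such uniformity argument is indispensable and is absent.

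The second and larger gap is the exotic limit of \eqref{sinhfunc} toward $\lambda_k^\text{ext}(\D,\alpha)$, which you correctly identify as the crux but leave as a plan. Two essential ingredients are missing. First, you never show the exterior problem \eqref{exteriorpde} possesses $2m+1$ negative discrete eigenvalues below its essential spectrum $[0,\infty)$; without this count (the paper's Lemma \ref{equalNegDim}, proved through monotonicity of the Bessel ratios $K_{j-1}/K_j$), the assertion that rescaled eigenfunctions converge to ``a full system of exterior eigenfunctions'' is not even well posed, and the matching of indices $k \leq 2m+1$ cannot be made. Second, your lower bound is an expectation, not an argument: nothing written rules out negative spectrum persisting in the bulk, nor addresses the functional-analytic mismatch that after inversion the natural trial space is $H_\text{ext}(\R^2\setminus\D) \supsetneq H^1(\R^2\setminus\D)$ while the exterior problem has essential spectrum $[0,\infty)$ and no compact resolvent. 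The paper resolves exactly this by sandwiching: the inversion $z\mapsto 1/z$ and the inclusion $H^1 \subset H_\text{ext}$ give the upper bound \eqref{interupperBound}, a Robin--Neumann problem on a large annulus gives the lower bound \eqref{interlowerBound} via reversed monotonicity, and then separation of variables plus uniform convergence of the implicit Bessel equations shows the annular radial eigenvalues converge to the exterior ones as $M\to\infty$. Until your concentration/decay estimate is actually carried out and the exterior count supplied, the convergence to $\lambda_k^\text{ext}(\D,\alpha)$ --- the most novel claim of the theorem --- remains unproved.
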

The appearance of the exterior Robin eigenvalue problem is quite surprising, but we will provide some intuition when we prove the limiting values. Compared to the previous theorem, monotonicity and limits reverse sign when the eigenvalue is negative. Another fact that emerges from the theorem is that the normalized eigenvalues cannot change sign as a function of $\Theta$: they remain negative or positive (or zero) for all $\Theta$.

We will often make use of the following function definitions:
\[
\si(\Theta) = 
\begin{cases} 
\sinh(|\Theta|) & \text{for } \Theta < 0, \\
\sin(\Theta) & \text{for } \Theta \geq 0,
\end{cases} \hspace{0.3cm} \ta(\Theta) = 
\begin{cases} 
\tanh(|\Theta|) & \text{for } \Theta < 0, \\
\tan(\Theta) & \text{for } \Theta \geq 0.
\end{cases}
\]
These definitions differ from \cite{LL22} in using $\sinh (|\Theta|)$ for $\Theta < 0$ rather than $\sinh( \Theta)$.
 These functions make our statements more succinct, e.g.,\ the first part of Theorem \ref{alphapos} reads that $\lambda_k(\Theta, \alpha/\si(\Theta))4\ta^2(\Theta/2)$ is an increasing function. 
\begin{table}[t]
\centering
\parbox{.45\linewidth}{
    \centering
    
    \begin{tabular}{lcc}
        \toprule
        & $\Theta \to -\infty$ & $\Theta \to \pi$ \\
        \midrule
        $\lambda_k > 0$ & 0 & $\infty$ \\
        $\lambda_k < 0$ & 0 & $-\infty$ \\
        \bottomrule
    \end{tabular}
    \caption*{$\displaystyle \lim_\Theta \, \lambda_k(\Theta, \alpha/\si(\Theta))\,4\ta^2(\Theta/2)$}
}
\hfill
\parbox{.45\linewidth}{
    \centering
   
    \label{tab:table2}
    \begin{tabular}{lcc}
        \toprule
        & $\Theta \to -\infty$ & $\Theta \to \pi$ \\
        \midrule
        $\lambda_k > 0$ & $\infty$ & $0$ \\
        $\lambda_k < 0$ & $-\infty$ & $\lambda_k^\text{ext}(\D, \alpha)$ \\
        \bottomrule
    \end{tabular}
     \caption*{$\displaystyle \lim_\Theta \, \lambda_k(\Theta, \alpha/\text{si}(\Theta))\,\text{\si}^2(\Theta)$}
}
\caption{Summary of Robin eigenvalue limits}
\label{roblims}
\end{table}

\begin{figure}[htbp]
\includegraphics[width=\linewidth]{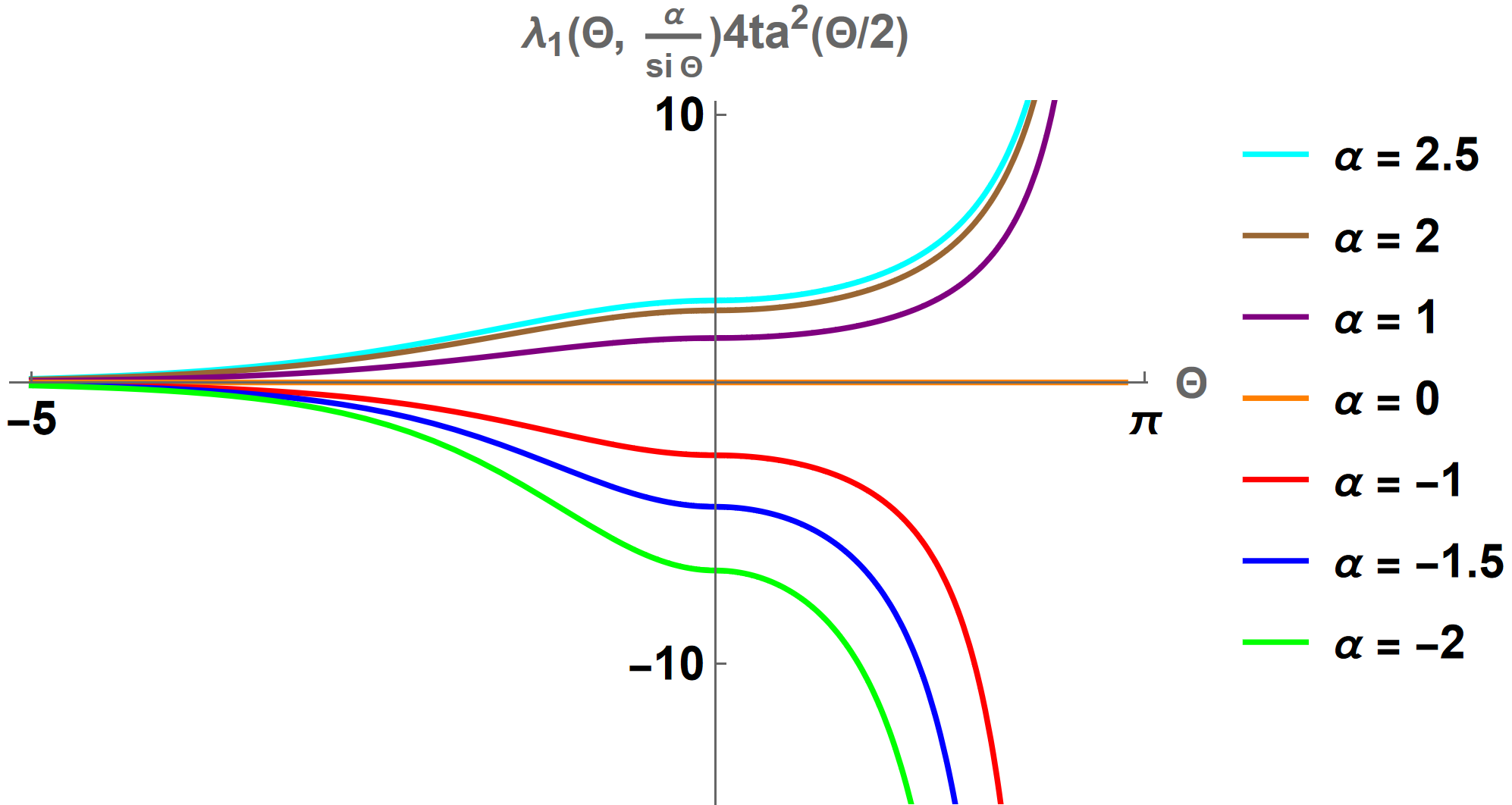}
\caption{Graph of the first Robin eigenvalue as a function of the aperture (or geodesic radius) $\Theta$ for fixed values of the Robin parameter $\alpha$. The parameter is normalized by perimeter and the eigenvalue scaled by stereographic radius. See Theorems \ref{alphapos}(i) and \ref{alphaneg}(i).}
\label{ta2fixedalpha}
\end{figure}
\begin{figure}[htbp]
\includegraphics[width=\linewidth]{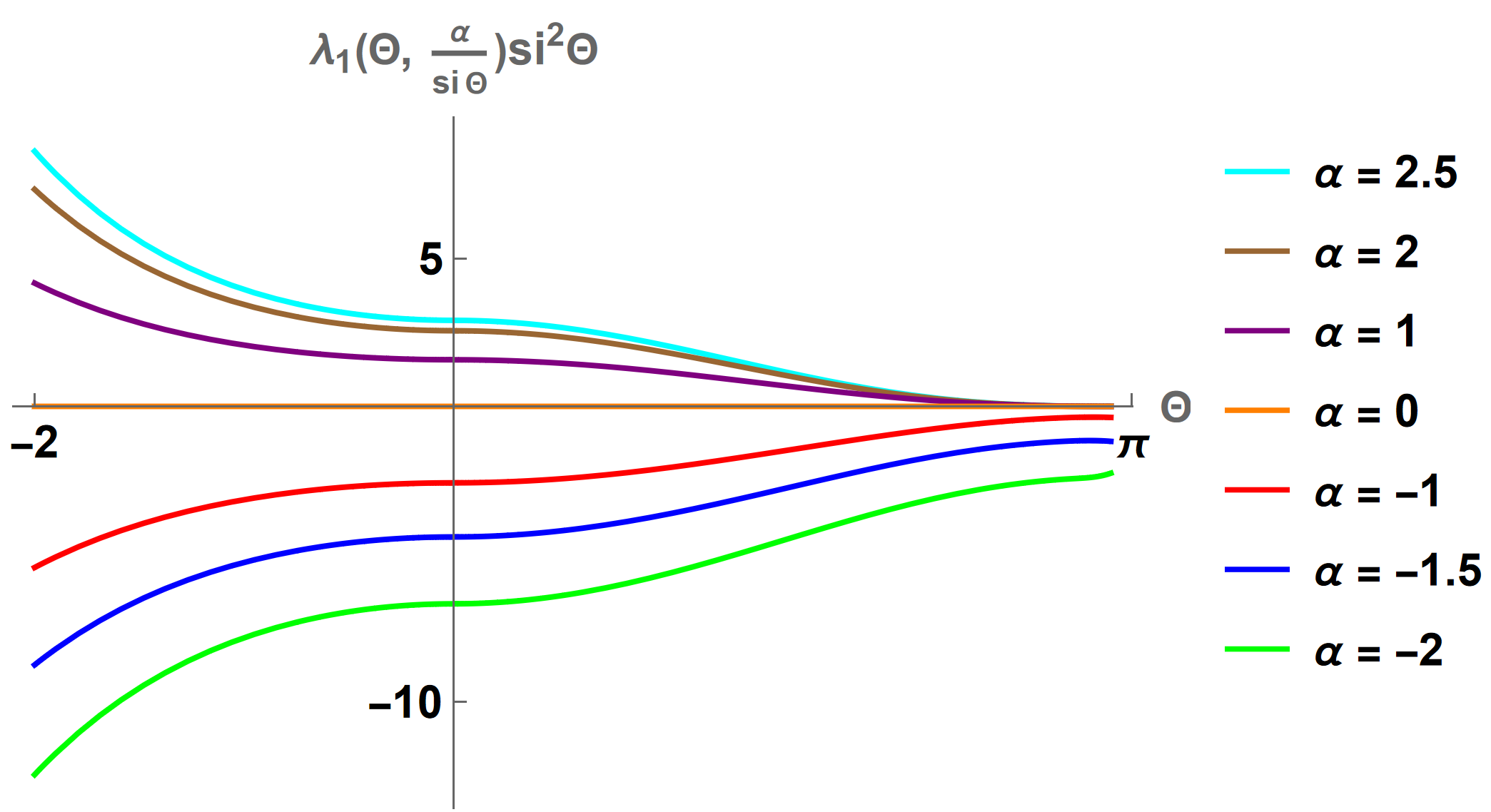}
\caption{Graph of the first Robin eigenvalue as a function of the aperture $\Theta$ for fixed values of the Robin parameter $\alpha$. The scaling factor is perimeter. Note the convergence to finite, nonzero limits for negative $\alpha$ values as $\Theta \to \pi$. See Theorems \ref{alphapos}(ii) and \ref{alphaneg}(ii).}
\label{si2fixedalpha}
\end{figure}

\begin{figure}[htbp]
\includegraphics[width=\linewidth]{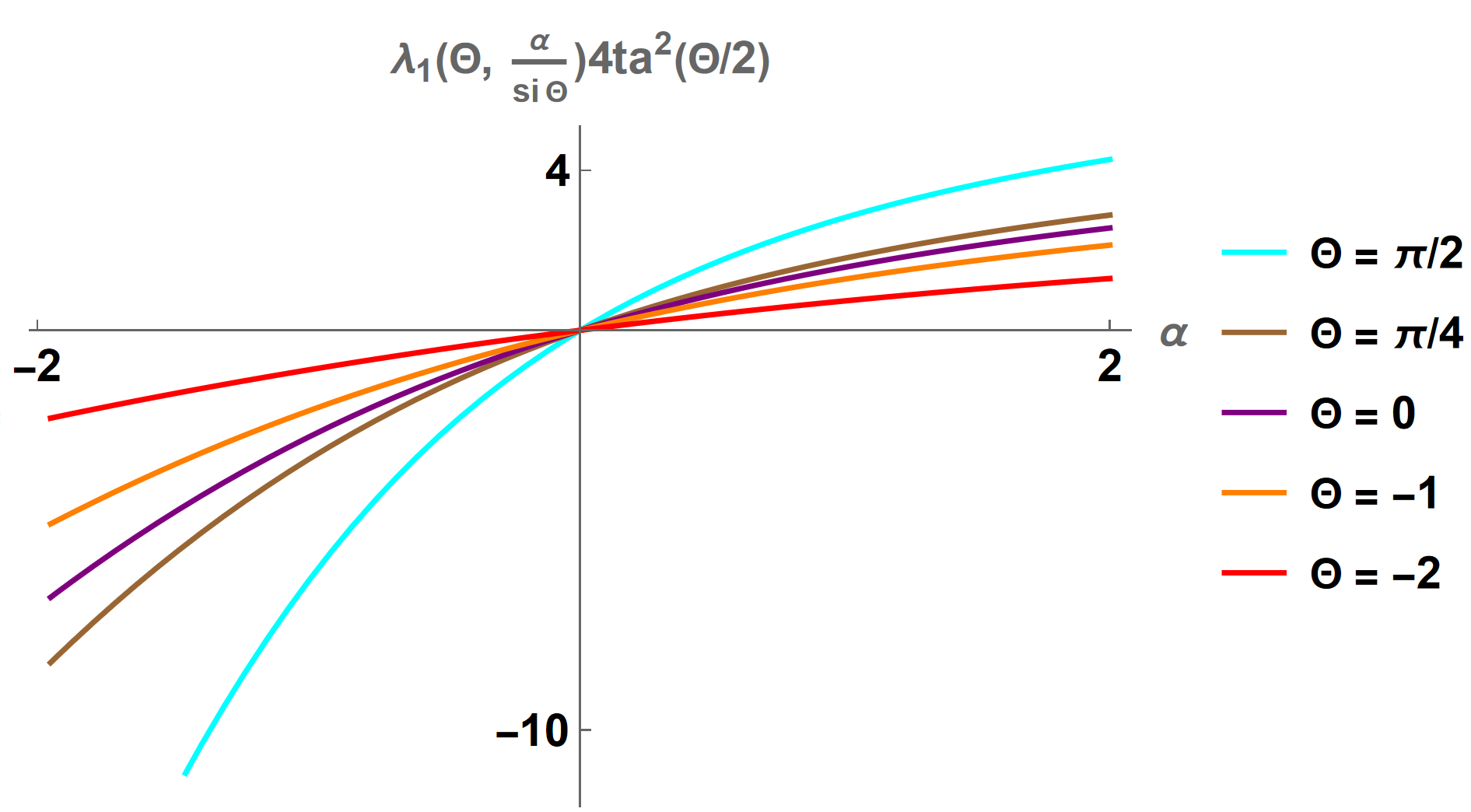}
\caption{Graph of the first Robin eigenvalue as a function of the Robin parameter $\alpha$ for fixed values of the aperture $\Theta$. The parameter has been normalized by perimeter and the eigenvalue scaled by stereographic radius. Note the reversal in ordering when crossing $\alpha = 0$.}
\label{ta2fixedtheta}
\end{figure}
\begin{figure}[htbp]
\includegraphics[width=\linewidth]{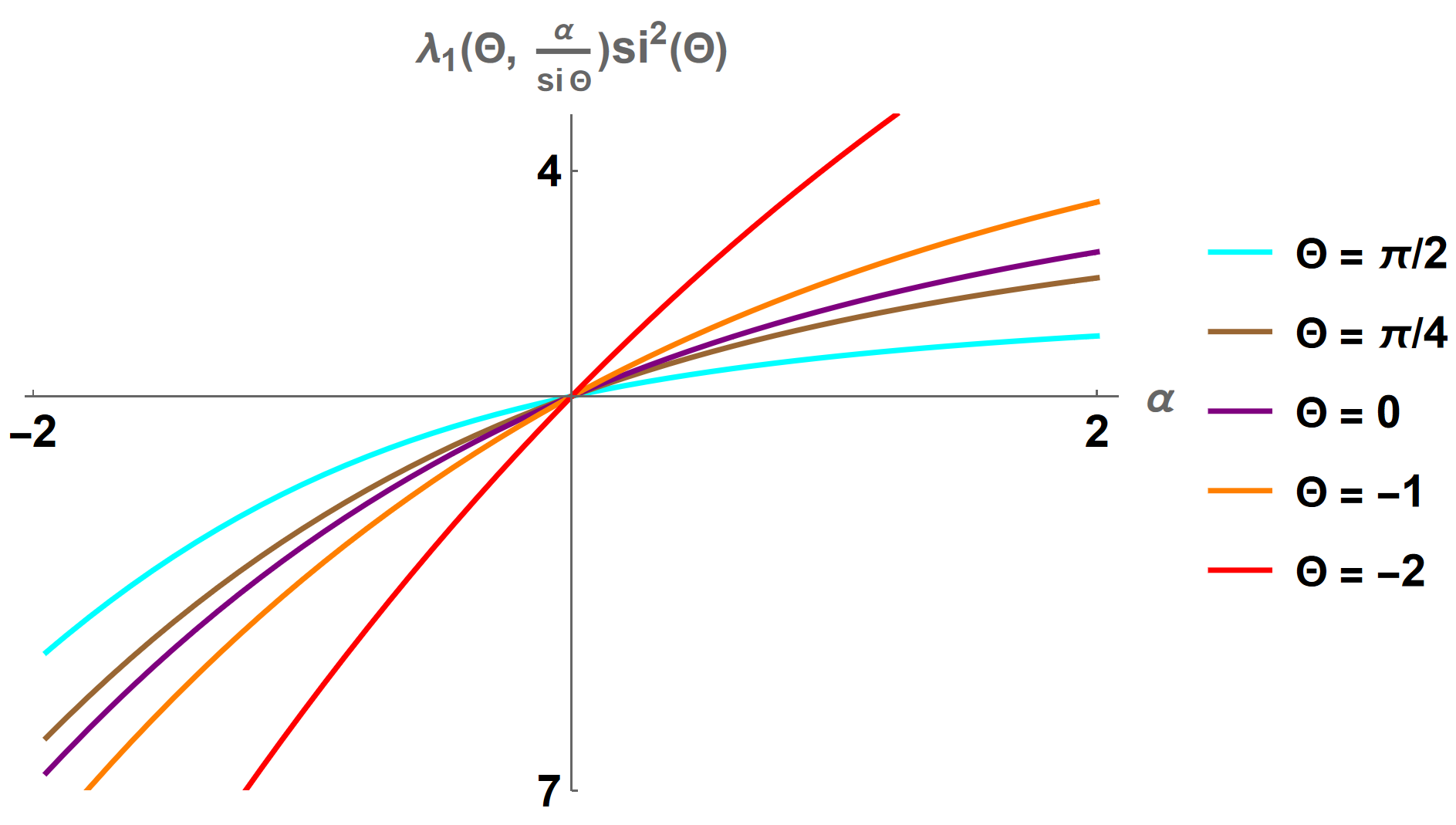}
\caption{Graph of the first Robin eigenvalue as a function of $\alpha$ for fixed values of $\Theta$. The eigenvalue is scaled and normalized by perimeter.}
\label{si2fixedtheta}
\end{figure}

Using this notation, we provide graphs of the first spectral curve ($k = 1$) in terms of $\alpha$ and $\Theta$. See Figures \ref{ta2fixedalpha}, \ref{si2fixedalpha}, \ref{ta2fixedtheta} and \ref{si2fixedtheta}. These plots help visualize Theorems \ref{alphapos} and \ref{alphaneg}. In addition, Table \ref{roblims} gives an overview of the limiting values.

\subsection*{Higher-dimensional results}

Higher dimensions pose completely different challenges to two dimensions. The Dirichlet integral is no longer conformally invariant and the proof techniques break down. In addition, previously natural choices of scaling factors in lower dimensions are no longer so in higher dimensions. The initial goal then is to determine which geometric quantities do suitably generalize to higher dimensions and scale the eigenvalue appropriately.

Table \ref{scaletab} gives examples of natural quantities on $\S^n$ which could potentially work, along with their geometric interpretation. The quantity $\sin(\Theta)$ is one such geometric factor that does not experience a monotonicity defect in higher dimensions, and it is this scaling quantity we focus on for $n \geq 3$. Volume also appears to be a monotonic scaling factor by the numerics in the Dirichlet case, but we postpone discussing this quantity for a brief moment.

Let $\lambda_k(\Theta)$ represent the $k^{\text{th}}$ Dirichlet eigenvalue of the Laplace--Beltrami operator on $C(\Theta)$. The scaling factor $\si^2(\Theta)$ is shown to produce monotonicity in higher dimensions, thus generalizing Langford and Laugesen's result to all dimensions, in the Dirichlet case. At $\Theta = 0$, we define $\lambda_k(\B^n)$ to be the $k^\text{th}$ Dirichlet eigenvalue of the Euclidean ball $\B^n \subset \R^n$.
\begin{theorem}[Higher-dimensional Dirichlet monotonicity]\label{main}
The function
\[
\Theta \mapsto 
\begin{cases}
\lambda_k(\Theta) \, \sinh^2(\Theta), & \Theta \in (-\infty,0) , \\
\lambda_k(\B^n) , & \Theta=0 , \\
\lambda_k(\Theta) \, \sin^2(\Theta) , & \Theta \in (0,\pi) ,
\end{cases}
\]
decreases strictly from $\infty$ to $0$, for all $k \geq 1$ and $n \geq 3$. See Figure \ref{dirichletlambda1graph}.
\end{theorem}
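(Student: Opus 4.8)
The plan is to prove the strict monotonicity through a differential inequality for $\log\!\big(\lambda_k(\Theta)\,\si^2(\Theta)\big)$, combining the Hadamard variational formula with a Rellich--Pohozaev identity built from the conformal vector field on the curved space. First I would fix $\Theta$ in the spherical range $(0,\pi)$ and differentiate. By Hadamard's formula, for an $L^2$-normalized Dirichlet eigenfunction $u$ with eigenvalue $\lambda=\lambda_k(\Theta)$ the eigenvalue moves at rate $\lambda_k'(\Theta)=-\int_{\partial C(\Theta)}|\partial_n u|^2\,dS$, since the boundary $\{\theta=\Theta\}$ advances at unit geodesic speed. Because $\frac{d}{d\Theta}\log\si^2(\Theta)=2\cot\Theta$ on the sphere, strict decrease of $\lambda_k\si^2$ is equivalent to the pointwise flux inequality $\int_{\partial C(\Theta)}|\partial_n u|^2\,dS>2\cot\Theta\,\lambda$; on the hyperbolic side the same computation gives the \emph{reversed} inequality $\int_{\partial}|\partial_n u|^2\,dS<2\coth|\Theta|\,\lambda$. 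The whole problem is thus converted into controlling the boundary flux by the eigenvalue.

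To evaluate the flux I would use the conformal field $X=\sin\theta\,\partial_\theta$ on $\S^n$ (respectively $\sinh\theta\,\partial_\theta$ on $\H^n$), which is the pullback of the Euclidean dilation $x\cdot\grad$ under stereographic projection. A direct computation shows $X$ is conformal with $\operatorname{div}X=n\,c(\theta)$ and deformation term $DX(\grad u,\grad u)=c(\theta)|\grad u|^2$, where $c=\cos\theta$ on the sphere and $c=\cosh\theta$ on $\H^n$; crucially the divergence is \emph{not} constant, and this is exactly the curvature defect absent in the flat case. Multiplying $-\lapl u=\lambda u$ by $X\cdot\grad u$ and integrating by parts (the standard Rellich--Pohozaev manipulation), using $u|_{\partial C(\Theta)}=0$, yields
\[
\si(\Theta)\int_{\partial C(\Theta)}|\partial_n u|^2\,dS=n\lambda\int_{C(\Theta)}c\,u^2\,dV-(n-2)\int_{C(\Theta)}c\,|\grad u|^2\,dV .
\]
A second integration by parts, invoking $-\lapl u=\lambda u$ once more, collapses the gradient term to $\int c\,|\grad u|^2=(\lambda\mp\tfrac n2)\int c\,u^2$ (minus on the sphere, plus on $\H^n$). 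Substituting, the entire flux is expressed through the single scalar $A:=\int_{C(\Theta)}c\,u^2\,dV$:
\[
\si(\Theta)\int_{\partial C(\Theta)}|\partial_n u|^2\,dS=\Big(2\lambda\pm\tfrac{n(n-2)}{2}\Big)A ,
\]
with $+$ on the sphere and $-$ on $\H^n$.

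The inequality then closes by a weighted-average estimate. Since $\int u^2\,dV=1$ and $c(\theta)$ is monotone in $\theta$, the number $A$ is an average of $c$ over $(0,\Theta)$, so $A>\cos\Theta$ on the sphere and $A<\cosh|\Theta|$ on $\H^n$. For the sphere with $\Theta\le\pi/2$ (so $\cos\Theta\ge0$), the bound $A>\cos\Theta$ together with the strictly positive correction $\tfrac{n(n-2)}{2}>0$ (here $n\ge3$ is essential) gives $(2\lambda+\tfrac{n(n-2)}{2})A>2\lambda\cos\Theta$, which is exactly the desired flux inequality; for $\Theta>\pi/2$ it is automatic because $\cot\Theta<0$ while $\lambda_k'<0$. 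On $\H^n$, either $2\lambda-\tfrac{n(n-2)}{2}\le0$, in which case the left side is nonpositive and $(2\lambda-\tfrac{n(n-2)}{2})A<2\lambda\cosh|\Theta|$ is immediate, or it is positive and $A<\cosh|\Theta|$ closes it, the strict gap $\tfrac{n(n-2)}{2}\cosh|\Theta|>0$ again using $n\ge3$. In every case the differential inequality is strict, so $\lambda_k\si^2$ is strictly monotone on each of $(-\infty,0)$ and $(0,\pi)$; note that at $\Theta=0$ the correction vanishes, matching the flat conformal borderline.

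The main technical obstacle is the eigenvalue multiplicity forced by rotational symmetry: $\lambda_k(\Theta)$ is generally degenerate, so Hadamard's formula must be read through one-sided derivatives. I would handle this by noting that the flux identity above holds for \emph{every} normalized eigenfunction in the eigenspace, whence both one-sided derivatives of $\log(\lambda_k\si^2)$ carry the correct sign and strict monotonicity survives; equivalently one may separate variables into spherical harmonics, prove the inequality for each simple radial eigenvalue, and recombine using that the $k$-th smallest of a family of strictly decreasing continuous functions is itself strictly decreasing. What remains is softer: continuity across $\Theta=0$ with value $\lambda_k(\B^n)$ follows from convergence of the rescaled caps to the Euclidean ball, while the endpoint limits ($\infty$ as $\Theta\to-\infty$, since $\lambda_k\to\tfrac{(n-1)^2}{4}$ while $\sinh^2|\Theta|\to\infty$, and $0$ as $\Theta\to\pi$, since $\lambda_k\to\lambda_k(\S^n)$ while $\sin^2\Theta\to0$) follow from domain convergence together with the removability of a point in dimension $n\ge3$. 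I expect the most delicate step to be the Pohozaev identity itself---selecting the conformal field so that the boundary term is precisely $\si(\Theta)\int_\partial|\partial_n u|^2$ and isolating the sign of the curvature term $\pm\tfrac{n(n-2)}{2}$---since it is this term, positive exactly when $n\ge3$, that drives the strict monotonicity and explains why the two-dimensional conformal argument degenerates.
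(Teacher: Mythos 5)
Your proposal is correct in substance, but it follows a genuinely different route from the paper. The paper never differentiates the eigenvalue: it transplants an eigenspace from $C(\Theta_1)$ to $C(\Theta_2)$ by the volume-element--preserving map
$\tilde f(\theta,\xi)=f(\Theta_1\theta/\Theta_2,\xi)\,\sqrt{\sin^{n-1}(\Theta_1\theta/\Theta_2)/\sin^{n-1}(\theta)}$,
pulls the Rayleigh quotient back to $C(\Theta_1)$ by a change of variables (integrating by parts to kill the cross term $g\,\partial_1 g$), and then bounds the three resulting coefficient functions $P$, $Q$, $R$ using elementary monotonicity of $\sin^2(x)/x^2$ and $\tan(x)/x$; the min--max principle then yields $\lambda_k(\Theta_2)\sin^2(\Theta_2)<\lambda_k(\Theta_1)\sin^2(\Theta_1)$ directly, with multiplicity handled automatically and no eigenvalue perturbation theory needed. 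Your route instead converts strict monotonicity into the flux inequality $\int_{\partial C(\Theta)}|\partial_n u|^2\,dS>2\lambda\cot\Theta$ (reversed on $\H^n$) via Hadamard's formula, and proves it by a Rellich--Pohozaev identity for the conformal field $X=\si(\theta)\,\partial_\theta$. I checked your identities and they are correct: $\operatorname{div}X=nc$ and $DX=c\,g$ (since $c$ has Hessian $\pm c\,g$), the Dirichlet boundary term collapses to $\tfrac12\si(\Theta)\int_{\partial}|\partial_n u|^2\,dS$, and $\lapl c=\mp nc$ gives $\int c\,|\grad u|^2\,dV=(\lambda\mp\tfrac{n}{2})A$, so that $\si(\Theta)\int_{\partial}|\partial_n u|^2\,dS=(2\lambda\pm\tfrac{n(n-2)}{2})A$ holds with your stated signs; the averaging bounds $A>\cos\Theta$ (sphere) and $A<\cosh|\Theta|$ ($\H^n$), together with the trivial regime $\Theta>\pi/2$, then close the differential inequality. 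What your approach buys is a structural explanation---the curvature correction $\pm\tfrac{n(n-2)}{2}$ is precisely the term that vanishes in the conformal dimension---plus a quantitative bound on $\tfrac{d}{d\Theta}\log(\lambda_k\si^2)$; what the paper's approach buys is economy of hypotheses, since it requires neither differentiability of eigenvalue branches, nor boundary regularity for the Pohozaev manipulation, nor any multiplicity bookkeeping.

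Two points to tighten if you write this up. First, make the separation-of-variables reduction your primary treatment of multiplicity: each angular momentum $\ell$ gives a one-dimensional Sturm--Liouville problem whose eigenvalues are simple and vary smoothly in $\Theta$, the flux identity applies to each product eigenfunction $f(\theta)Y_\ell(\xi)$, and the $k$-th smallest of a locally finite family of continuous strictly decreasing functions is strictly decreasing; your alternative ``one-sided derivatives over the eigenspace'' argument is also true but requires Rellich--Kato perturbation theory to be made precise. Second, for the endpoint limits you do not need the precise limits you quote: as $\Theta\to-\infty$, McKean's bound $\lambda_k(\Theta)\geq\lambda_1(\Theta)\geq\tfrac{(n-1)^2}{4}>0$ already forces $\lambda_k(\Theta)\sinh^2(\Theta)\to\infty$ (the paper instead quotes Borisov--Freitas), and as $\Theta\to\pi$, boundedness $\lambda_k(\Theta)\leq\lambda_k(\pi/2)$ by domain monotonicity suffices, with no appeal to removability of a point.
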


The result does not exclude other geometric scaling factors from working, nor does it state that other eigenvalues such as Neumann or Robin do not behave monotonically when scaled. Indeed, we will briefly discuss open problems below.
\subsection*{Open monotonicity problems}
The statement analogous to Theorem \ref{main} for the Neumann spectrum is supported by numerical evidence. However, the proof of the theorem does not translate well to the Neumann case, and so we state the analog as a conjecture. Let $\mu_k(\Theta)$  be the $k^{\text{th}}$ Neumann eigenvalue on $C(\Theta)$.
\begin{conjecture}[Higher-dimensional Neumann monotonicity] \label{neumannconj}
The function
\[
\Theta \mapsto 
\begin{cases}
\mu_k(\Theta) \, \sinh^2(\Theta), & \Theta \in (-\infty,0) , \\
\mu_k(\B^n) , & \Theta=0 , \\
\mu_k(\Theta) \, \sin^2(\Theta) , & \Theta \in (0,\pi/2) ,
\end{cases}
\]
decreases strictly from $\infty$ to $0$, for all $k \geq 2$ and $n \geq 3$. See Figure \ref{neumannlambdagraph}.
\end{conjecture}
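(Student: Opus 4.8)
The plan is to run, for the Neumann spectrum, the same curvature-removing reduction that proves the Dirichlet case (Theorem \ref{main}), and then to locate precisely where the Neumann condition obstructs it. In stereographic coordinates the cap $C(\Theta)$ becomes a Euclidean ball and the metric is conformally flat with factor $\Omega(x)=2/(1+\sigma|x|^2)$, with $\sigma=+1$ on $\S^n$ and $\sigma=-1$ on $\H^n$; since $\int|\grad u|_g^2\,dV_g=\int\Omega^{n-2}|\grad u|^2\,dx$ and $\int u^2\,dV_g=\int\Omega^n u^2\,dx$, rescaling to the unit ball $\B^n$ and multiplying by $\si^2(\Theta)$ collapses all the geometry into one parameter:
\[
\mu_k(\Theta)\,\si^2(\Theta)=\frac{1}{(1+s)^2}\,\min_{\dim V=k}\ \max_{u\in V\setminus\{0\}}\ \frac{\int_{\B^n}(1+s|y|^2)^{2-n}|\grad u|^2\,dy}{\int_{\B^n}(1+s|y|^2)^{-n}u^2\,dy},
\]
where now $V$ ranges over subspaces of $H^1(\B^n)$ (no boundary constraint), and $s=\tan^2(\Theta/2)$ on the sphere while $s=-\tanh^2(|\Theta|/2)$ in hyperbolic space. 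The map $\Theta\mapsto s$ is increasing, sends $\Theta=0$ to $s=0$ (giving $\mu_k(\B^n)$ and continuity across $\Theta=0$), and sends the conjectured range $\Theta\in(-\infty,\pi/2)$ onto $s\in(-1,1)$. Since $\mu_1\equiv 0$ (constants), we work with $k\ge 2$. Thus the conjecture reduces to showing that $G(s)$, the right-hand side, is strictly decreasing on $(-1,1)$, and the exponent $2-n$ signals that for $n\ge 3$ the Dirichlet integral is no longer conformally invariant, so \cite{LL22} cannot simply be quoted.

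Next I would differentiate. With $\B^n$ and $H^1(\B^n)$ fixed, $G$ is locally Lipschitz and, by the Hellmann--Feynman principle, at each $s$ its derivative is obtained by varying only the explicit weights at a normalized eigenfunction $u$. Writing $F(s)$ for the weighted minimax (so $G=F/(1+s)^2$) and normalizing $\int(1+s|y|^2)^{-n}u^2=1$, one gets
\[
F'(s)=(2-n)\int_{\B^n}|y|^2(1+s|y|^2)^{1-n}|\grad u|^2\,dy+nF\int_{\B^n}|y|^2(1+s|y|^2)^{-n-1}u^2\,dy,
\]
and $G'(s)<0$ is equivalent to $(1+s)F'(s)<2F(s)$. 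For $n\ge 3$ the first term is negative and the second positive, so the inequality cannot be obtained term by term and must use that $u$ solves the weighted eigenequation $-\grad\!\cdot\!\big((1+s|y|^2)^{2-n}\grad u\big)=F(1+s|y|^2)^{-n}u$. For the Dirichlet problem ($u=0$ on $\partial\B^n$) this is closed by a Rellich--Pohozaev identity: testing against the multiplier $y\cdot\grad u$ produces the boundary integral $\int_{\partial\B^n}(1+s|y|^2)^{2-n}(\partial_\nu u)^2\,dS\ge 0$ of definite sign, which forces $(1+s)F'<2F$. \textbf{The main obstacle is precisely that this boundary term is no longer sign-definite under the Neumann condition.} With $\partial_\nu u=0$ on $\partial\B^n$ the boundary gradient is purely tangential, and the same multiplier now yields contributions $\int_{\partial\B^n}\!\big(\cdots|\grad_T u|^2+\cdots u^2\big)\,dS$ of indefinite sign; this is exactly why the Dirichlet proof ``does not translate,'' and it is plausibly why the range must be restricted to $\Theta<\pi/2$, i.e.\ $|s|<1$, where one might still dominate these terms. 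Resolving the sign therefore requires either a Neumann-adapted multiplier or weight, or a global test-function comparison, rather than the soft identity that suffices for Dirichlet.

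Granting the monotonicity of each branch, the full conjecture follows as in the Dirichlet case. Separating variables into radial Sturm--Liouville problems indexed by the spherical-harmonic degree $\ell$ on $\S^{n-1}$ produces a countable family of eigenvalue curves; if each curve $\Theta\mapsto\mu_{\ell,j}(\Theta)\si^2(\Theta)$ is continuous and strictly decreasing, then an order-statistic argument---using that the constant branch $\mu_1\si^2\equiv 0$ sits strictly below all others---transfers strict monotonicity to the $k$-th ordered eigenvalue for every $k\ge 2$. For the limits, as $\Theta\to-\infty$ the geodesic ball exhausts $\H^n$ and $\si^2(\Theta)=\sinh^2(\Theta)\to\infty$, so the product tends to $\infty$ provided the Neumann eigenvalues of large hyperbolic balls stay bounded below by a positive constant, which must be checked separately (unlike Dirichlet, Neumann eigenvalues lack automatic domain monotonicity); the behavior at the equator $\Theta\to\pi/2$ is read off directly from the reduced formula at $s=1$. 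I expect the monotonicity sign to be the decisive difficulty, with the limit and assembly steps being adaptations of the Dirichlet analysis.
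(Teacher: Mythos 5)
This statement is Conjecture \ref{neumannconj}: the paper does not prove it, offering only numerical evidence (Figure \ref{neumannlambdagraph}) and the explicit remark that the proof of Theorem \ref{main} does not translate to the Neumann case. So there is no proof in the paper to compare against; the only question is whether your argument closes the open problem, and it does not. Your proposal reduces the claim to the inequality $(1+s)F'(s)<2F(s)$ and then concedes, in your own words, that the decisive boundary term arising from the Rellich--Pohozaev multiplier $y\cdot\grad u$ is not sign-definite under the Neumann condition, with no mechanism offered to control it. That is not a removable technicality: it is exactly the obstruction that keeps this a conjecture rather than a theorem. Everything downstream (``granting the monotonicity of each branch\dots'') is conditional on this unproved step, and the limit $\Theta\to-\infty$ is likewise left hanging on an unverified positive lower bound for Neumann eigenvalues of large hyperbolic balls --- a real issue, since Neumann eigenvalues lack domain monotonicity and the divergence of $\sinh^2(\Theta)$ alone does not force the product to diverge.

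A secondary correction: your account of how the Dirichlet case is proved does not match the paper. For $n\ge 3$ the paper abandons conformal/stereographic methods precisely because the Dirichlet integral is no longer conformally invariant; Theorem \ref{main} is instead proved by the direct transplantation $\tilde f(\theta,\xi)=f(\Theta_1\theta/\Theta_2,\xi)\sqrt{\sin^{n-1}(\Theta_1\theta/\Theta_2)/\sin^{n-1}(\theta)}$, a linear change of variables, and pointwise bounds on the coefficient functions $P$, $Q$, $R$, in which the cross term $2g\,\partial_1 g=(g^2)_1$ is integrated by parts and the boundary terms vanish because of the Dirichlet condition. If you were to attack the Neumann conjecture along the paper's lines, that integration by parts is where the Neumann condition breaks the argument (the boundary term at $t=\Theta_1$ no longer vanishes and has indefinite sign) --- structurally the same failure you identified in your Pohozaev route. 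Either way, the monotonicity step remains genuinely open: the proposal is a sensible diagnosis and research plan, but not a proof.
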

The lower-dimensional case $n = 2$ of Conjecture \ref{neumannconj} was shown to hold by Langford and Laugesen in \cite[Theorem 1]{LL22}. One could further conjecture that the Robin case satisfies a monotonicity result in higher dimensions, but the proof would encounter similar obstacles to the Neumann case.

For another open problem, we look at the scaling factor volume. Write $n$-dimensional volume of the geodesic ball of radius $|\Theta|$ as
\[
V(\Theta) := c_n\int_0^{|\Theta|} \si(\theta)^{n-1} \, d\theta,
\]where $c_n$ is the appropriate normalizing factor, equal to the surface area of the Euclidean $(n-1)$-sphere $\S^{n-1}$. Consider the scaled Dirichlet and Neumann eigenvalues \[\lambda_k(\Theta)V(\Theta)^{2/n}, \quad \mu_k(\Theta)V(\Theta)^{2/n},
\]with $\lambda_k(\Theta)V(\Theta)^{2/n} = \lambda_k(\B^n)|\B^n|^{2/n}$ at $\Theta = 0 $ by convention. This scaling factor is somewhat peculiar since Figure \ref{surfacemonofigs} numerically supports that $\lambda_k(\Theta)V(\Theta)^{2/n}$ is always decreasing whereas Figure \ref{neummanincreasingfig} numerically supports that $\mu_k(\Theta)V(\Theta)^{2/n}$ is always increasing.

We look at the Dirichlet case first, for $k =1,2$. Observe that after a change of variable $t = \theta/\Theta$, we have for $\Theta \in (0, \pi)$ that
\[
\frac{V(\Theta)}{\Theta^n} = c_n \int_0^1 \left(\frac{\sin(\Theta t)}{
\Theta t}\right)^{\! \! n-1} t^{n-1} dt,
\]implying that $V(\Theta)^{2/n}/\Theta^2$ is a decreasing function since $\sin(x)/x$ is decreasing.  (The $\Theta < 0$ case is proven similarly.) One may also verify that the displayed ratio converges to $|\B^n|$ as $\Theta \to 0$. Since $\lambda_1(\Theta)\Theta^2$ decreases from $\infty$ to $0$ by \cite[Theorem 3]{LL22}, we see in all dimensions $n \geq 2$ that
\[\lambda_1(\Theta)V(\Theta)^{2/n}\]
decreases strictly from $\infty$ to $0$ for $\Theta \in (-\infty, \pi)$, passing through the value $\lambda_k(\B^n)|\B^n|^{2/n}$ continuously at $\Theta = 0$. Note that \cite[Theorem 3]{LL22} also manifests as a consequence of a theorem by Cheng (see \cite{C75} and \cite[Chapter III]{C84}.) Furthermore, for the second eigenvalue, from \cite[Theorem 4]{LL22} we see for all $n \geq 2$ that
\[
\lambda_2(\Theta)V(\Theta)^{2/n}
\]
decreases strictly from $\infty$ to $\lambda_k(\B^n)|\B^n|^{2/n}$  for $\Theta \in (-\infty, 0]$. However, we cannot make further deductions on the behavior of the function on the interval $(0, \pi)$ using this theorem. 

For the Neumann case, \cite[Theorem 6]{LL22} implies in dimension $n =2$
\[
\mu_2(\Theta)V(\Theta)^{2/n}
\]
increases strictly and continuously on $(-\infty, \pi)$ from $2\pi$ to $8\pi$. Observe how the function converges to non-zero values, highlighting the peculiar behavior of this scaling factor.

We state monotonicity conjectures for these two functionals. 

\begin{conjecture}[Dirichlet monotonicity with volume scaling]\label{surfaceareaconjdirch} The function
\[
\lambda_k(\Theta) \, V(\Theta)^{2/n}
\]
decreases strictly for $\Theta \in (-\infty, \pi)$ for $k \geq 2$ and $n \geq 2$. See Figure \ref{surfacemonofigs}.
\end{conjecture}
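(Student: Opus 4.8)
The plan is to turn the monotonicity into a pointwise differential inequality and then to evaluate the resulting boundary flux with a curved Pohozaev identity. Since $C(\Theta)$ grows as $\Theta$ increases and Dirichlet eigenvalues are domain monotone, the ordered eigenvalues are Lipschitz and one may work with real-analytic branches; the Hadamard variational formula gives, for $\Theta\in(0,\pi)$ and an $L^2$-normalized eigenfunction $u_k$,
\[
\frac{d}{d\Theta}\lambda_k(\Theta)=-\int_{\partial C(\Theta)}(\partial_n u_k)^2\,dS,
\]
the boundary moving at unit normal speed because $\theta$ is geodesic distance, with the sign reversed on the hyperbolic side. (Where $\lambda_k$ is degenerate one checks the inequality on each analytic branch, which yields strict monotonicity of the ordered spectrum.) Using $V'(\Theta)=|\partial C(\Theta)|=c_n\,\si(\Theta)^{n-1}$ and $\int_{C(\Theta)}|\grad u_k|^2=\lambda_k$, the claim $\frac{d}{d\Theta}\log\bigl(\lambda_k V^{2/n}\bigr)<0$ on $(0,\pi)$ is equivalent to the boundary-flux inequality
\[
\frac{\int_{\partial C(\Theta)}(\partial_n u_k)^2\,dS}{\int_{C(\Theta)}|\grad u_k|^2\,dV}>\frac{2}{n}\,\frac{|\partial C(\Theta)|}{V(\Theta)},
\]
with the inequality reversed for $\Theta\in(-\infty,0)$.

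Next I would evaluate the flux exactly. Let $\si$ be the warping function of the metric $d\theta^2+\si(\theta)^2 g_{\S^{n-1}}$, so $\si(\theta)=\sin\theta$ on the cap and $\sinh\theta$ on the disk, and set $X=\si(\theta)\,\partial_\theta=\grad\phi$ with $\phi'=\si$. A computation gives $\mathrm{Hess}\,\phi=\si'(\theta)\,g$, so $X$ is the conformal field generalizing the Euclidean dilation field (it is the spherical and hyperbolic analogue of the field attached to the stereographic coordinate of \eqref{stereo}). Inserting $X$ into the Rellich--Pohozaev identity for $-\lapl u_k=\lambda_k u_k$ with $u_k=0$ on $\partial C(\Theta)$, and using that $\si(\Theta)$ is constant on the boundary sphere, produces
\[
\si(\Theta)\int_{\partial C(\Theta)}(\partial_n u_k)^2\,dS=n\lambda_k\!\int_{C(\Theta)}\!\si'(\theta)\,u_k^2\,dV+(2-n)\!\int_{C(\Theta)}\!\si'(\theta)\,|\grad u_k|^2\,dV.
\]
Because $\int_{C(\Theta)}\si'\,dV=\tfrac{c_n}{n}\si(\Theta)^n$, the target reduces, after dividing by $\lambda_k$, to a comparison of three averages of $\si'$. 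Writing $\langle\si'\rangle_0=\int_C\si'\,u_k^2$, $\langle\si'\rangle_1=\lambda_k^{-1}\int_C\si'\,|\grad u_k|^2$ and $\overline{\si'}=V^{-1}\int_C\si'$, the claim becomes
\[
n\bigl(\langle\si'\rangle_0-\langle\si'\rangle_1\bigr)+2\bigl(\langle\si'\rangle_1-\overline{\si'}\bigr)>0.
\]
In Euclidean space $\si'\equiv1$ and the three averages coincide, giving equality; this is exactly the Euclidean scale invariance, so the content of the conjecture is precisely that curvature breaks this equality in the decreasing direction.

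The sign of that curvature correction is the crux. On the sphere $\si'=\cos\theta$ decreases in the radius $\theta$, so the $u_k^2$ weight, whose mass sits in the interior, sees a larger average than the $|\grad u_k|^2$ weight, whose mass sits near the boundary; this makes $\langle\si'\rangle_0-\langle\si'\rangle_1>0$, while the same gradient weighting forces $\langle\si'\rangle_1<\overline{\si'}$, so the two grouped terms compete and the inequality asserts that the first wins. For the ground state $u_1>0$ this balance is favorable and recovers the $k=1$ statement already proved in the text, and on the hyperbolic side the analogous comparison together with \cite[Theorem 4]{LL22} and Cheng's comparison theorem \cite{C75} should dispatch $\Theta\le0$. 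I expect \emph{the genuine obstacle} to be the higher spherical modes $k\ge2$ with $\Theta>\pi/2$: there $u_k$ changes sign and $\si'=\cos\theta$ itself changes sign across the cap, so neither weighted average is pinned down by a maximum principle. The most promising route is to separate variables, reduce the three averages to one-dimensional integrals of the radial factors against $\si'$, and control them through the nodal structure of the radial ODE.

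A companion strategy is the factorization $\lambda_k V^{2/n}=\bigl(\lambda_k\si^2\bigr)\cdot\bigl(V^{2/n}/\si^2\bigr)$, whose first factor is controlled by Theorem \ref{main} for $n\ge3$ and by \cite[Theorem 1]{LL22} for $n=2$. This does not suffice by monotonicity alone: the second factor grows to $\infty$ as $\Theta\to\pi$, so one needs a quantitative decay rate for $\lambda_k\si^2$, equivalently a sharp bound on $\lambda_k'/\lambda_k$. In dimension $n=2$ the two scalings of \cite[Theorem 1]{LL22} give $-2/\sin\Theta<\lambda_k'/\lambda_k<-2\cot\Theta$, whereas the quantity needed for volume scaling is $-V'/V=-\cot(\Theta/2)$, which lies strictly between these bounds; thus the volume functional is genuinely intermediate between the two known scalings and neither monotonicity result alone decides it. Establishing a bound sharp enough to place $\lambda_k'/\lambda_k$ below $-\tfrac{2}{n}V'/V$ — most naturally through the exact Pohozaev expression above — is, I expect, the decisive step.
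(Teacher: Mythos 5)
This statement is Conjecture \ref{surfaceareaconjdirch} in the paper: the paper offers \emph{no} proof of it, only the nearby partial results ($k=1$ for all $n\geq 2$, obtained by combining \cite[Theorem 3]{LL22} with the monotonicity of $V(\Theta)^{2/n}/\Theta^2$, and $k=2$ on $(-\infty,0]$ from \cite[Theorem 4]{LL22}) together with numerical evidence. So the real question is whether your argument settles the open case $k\geq 2$ on $(-\infty,\pi)$. It does not. To give credit where due: your reduction is correct as far as it goes. The Hadamard formula, the equivalence of monotonicity with the boundary-flux inequality, and the curved Pohozaev identity all check out — with $\mathrm{Hess}\,\phi = \si'(\theta)\,g$ the identity
$\si(\Theta)\int_{\partial C(\Theta)}(\partial_n u_k)^2\,dS = n\lambda_k\int \si'\,u_k^2\,dV + (2-n)\int \si'\,|\grad u_k|^2\,dV$
is the correct generalization of the Rellich identity (it collapses to $\int_{\partial\Omega}(x\cdot\nu)(\partial_\nu u)^2\,dS = 2\lambda$ in the flat case), and your bracketing computation $2\cot\Theta < \cot(\Theta/2) < 2/\sin\Theta$ correctly shows that neither known scaling result decides the volume functional, which matches the paper's own remark that \cite[Theorem 4]{LL22} cannot be pushed past $\Theta = 0$.

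The genuine gap is that the inequality $n\bigl(\langle\si'\rangle_0-\langle\si'\rangle_1\bigr)+2\bigl(\langle\si'\rangle_1-\overline{\si'}\bigr)>0$ \emph{is} the conjecture, re-expressed, and nothing in your proposal proves it. The heuristic that the mass of $u_k^2$ sits in the interior while the mass of $|\grad u_k|^2$ sits near the boundary is not a rigorous mechanism, and it breaks down precisely in the only cases the conjecture concerns ($k\geq 2$): there $u_k$ changes sign, $|\grad u_k|$ is large near nodal sets rather than concentrated at $\partial C(\Theta)$, and for $\Theta>\pi/2$ the weight $\si'=\cos\theta$ itself changes sign, so none of the three averages has a controlled sign or ordering — a difficulty you yourself flag as "the genuine obstacle" and then leave unresolved, deferring it to an unspecified analysis of radial nodal structure. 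Note also that in dimension $n=2$ your reduction becomes simply $\int_C \cos\theta\,u_k^2\,dV > V^{-1}\int_C\cos\theta\,dV$, which is clean but still unproven for $k\geq2$. Even your claim that the Pohozaev balance "recovers the $k=1$ statement" is asserted rather than shown (and the paper's $k=1$ proof goes by an entirely different route), while "Cheng's theorem plus \cite[Theorem 4]{LL22} dispatch $\Theta\leq 0$" at best reproduces $k=2$ there, not $k\geq 3$. In short: you have produced a correct and possibly useful reformulation of the conjecture as a weighted-average inequality, plus heuristics for why it should hold — but the conjecture remains exactly as open at the end of your argument as at the beginning.
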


\begin{conjecture}[Neumann monotonicity with volume scaling]\label{surfaceareaconjneum} The function
\[
\mu_k(\Theta) \, V(\Theta)^{2/n}
\]
increases strictly for $\Theta \in (-\infty, \pi)$ for $k \geq 2$ and $n \geq 2$. See Figure \ref{neummanincreasingfig}.
\end{conjecture}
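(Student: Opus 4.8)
The plan is to separate the geometry from the spectrum by factoring
\[
\mu_k(\Theta)\,V(\Theta)^{2/n}=\bigl(\mu_k(\Theta)\,\si^2(\Theta)\bigr)\cdot\frac{V(\Theta)^{2/n}}{\si^2(\Theta)},
\]
proving the geometric factor increases and then attacking the spectral factor. First I would establish the clean lemma that $V(\Theta)^{2/n}/\si^2(\Theta)$ increases strictly from $0$ at $\Theta=-\infty$, through $|\B^n|^{2/n}$ at $\Theta=0$, to $\infty$ at $\Theta=\pi$. In the spherical regime this reduces, after differentiating and using $V'(\Theta)=c_n\sin^{n-1}\Theta$, to the inequality $\tfrac1n\sin^{n}\Theta>\cos\Theta\int_0^{\Theta}\sin^{n-1}\theta\,d\theta$, which follows from the identity $\tfrac1n\sin^{n}\Theta=\int_0^{\Theta}\sin^{n-1}\theta\cos\theta\,d\theta$ together with the strict bound $\cos\theta>\cos\Theta$ on $(0,\Theta)$. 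The hyperbolic regime is identical with $\cosh$ in place of $\cos$ and the inequality reversed, which after accounting for the orientation of $\Theta<0$ again yields strict monotonicity.

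This reduces the conjecture to a sharp rate comparison. By Conjecture~\ref{neumannconj} the complementary factor $\mu_k(\Theta)\,\si^2(\Theta)$ is (conjecturally) decreasing, so the product is of the indeterminate form $(\text{decreasing})\cdot(\text{increasing})$; indeed near $\Theta=-\infty$ it is a genuine $\infty\cdot 0$ competition, since in the known case $n=2$, $k=2$ of \cite[Theorem 6]{LL22} the product tends to the finite value $2\pi$ there while each factor is individually unbounded or vanishing. No soft ``product of monotone functions'' argument can therefore succeed, and one must instead establish the quantitative logarithmic-derivative inequality
\[
\frac{\mu_k'(\Theta)}{\mu_k(\Theta)}+\frac{2}{n}\,\frac{V'(\Theta)}{V(\Theta)}>0,
\qquad
\frac{V'(\Theta)}{V(\Theta)}=\frac{\si^{\,n-1}(\Theta)}{\int_0^{|\Theta|}\si^{\,n-1}(\theta)\,d\theta},
\]
for $k\ge 2$ (where $\mu_k>0$), with the sign bookkeeping adjusted on $\Theta<0$.

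To access $\mu_k'(\Theta)$ I would separate variables, writing a Neumann eigenfunction as $f(\theta)Y(\xi)$ with $Y$ a spherical harmonic of degree $\ell$ on $\S^{n-1}$, so that $f$ solves the weighted Sturm--Liouville problem $(\si^{\,n-1}f')'+\bigl(\mu\,\si^{\,n-1}-\ell(\ell+n-2)\si^{\,n-3}\bigr)f=0$ on $(0,b)$, $b=|\Theta|$, regular at $0$ and with the natural condition $f'(b)=0$. Because the Neumann condition is natural, the Rayleigh-quotient envelope argument applies without a moving constraint, giving the clean endpoint-derivative formula
\[
\frac{d\mu}{db}=\bigl(\ell(\ell+n-2)\,\si^{\,n-3}(b)-\mu\,\si^{\,n-1}(b)\bigr)\,f(b)^2,
\]
where $f$ is normalized by $\int_0^b\si^{\,n-1}f^2\,d\theta=1$; one checks this against the flat model $\cos(k\pi\theta/b)$. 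Substituting this into the displayed inequality turns the conjecture into a concrete statement comparing the normalized boundary value $f(b)^2$ with the averaged weight $1/\int_0^b\si^{\,n-1}$ and with the quantity $\ell(\ell+n-2)-\mu\,\si^2(b)$.

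The main obstacle is signing this comparison sharply and uniformly. The coefficient $\ell(\ell+n-2)-\mu\,\si^2(b)$ is sign-indefinite, so $\mu_k$ need not even be monotone in $\Theta$, and what is required is not mere positivity of $\mu_k'$ but a lower bound strong enough to overcome the decay of $\mu_k\si^2$ --- precisely the regime where the factorization is indeterminate. This forces a genuine two-sided estimate on the boundary value of the normalized radial eigenfunction, holding uniformly across angular modes $\ell$, spectral indices $k$, dimensions $n$, and both geometries (where the endpoint data enter with opposite sign). By contrast, for the Dirichlet problem of Theorem~\ref{main} the correct Hadamard formula carries $f'(b)^2$ rather than $f(b)^2$ and a favorable fixed sign, which is exactly why Dirichlet succumbs while Neumann resists; the very same eigenfunction estimate is what leaves Conjecture~\ref{neumannconj} open, so the two should likely be settled together, with the $n=2$, $k=2$ computation serving as the consistency check for any candidate bound.
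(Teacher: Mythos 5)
This statement is a \emph{conjecture} in the paper: no proof of it exists there. The paper offers only the numerical evidence of Figure \ref{neummanincreasingfig}, the known special case $n=2$, $k=2$ from \cite[Theorem 6]{LL22}, and the implicit explanation of why the easy route fails. So your proposal must stand on its own as a complete argument, and it does not --- as you yourself acknowledge in your final paragraph.

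To give credit where due: the parts you actually prove are correct. The geometric lemma that $V(\Theta)^{2/n}/\si^2(\Theta)$ increases strictly is sound (the comparison $\tfrac1n\sin^n\Theta=\int_0^\Theta\sin^{n-1}\theta\cos\theta\,d\theta>\cos\Theta\int_0^\Theta\sin^{n-1}\theta\,d\theta$, trivially true also on $(\pi/2,\pi)$ where $\cos\Theta<0$, and its hyperbolic mirror with the inequality reversed), and your Hadamard formula $d\mu/db=\bigl(\ell(\ell+n-2)\,\si^{n-3}(b)-\mu\,\si^{n-1}(b)\bigr)f(b)^2$ is the correct Neumann boundary-variation formula, consistent with the flat check. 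Your diagnosis of the obstruction is also exactly right and mirrors the paper's own logic: the paper settles Dirichlet-with-volume-scaling for $k=1$ precisely because there \emph{both} factors $\lambda_1(\Theta)\Theta^2$ and $V(\Theta)^{2/n}/\Theta^2$ decrease, so a soft product argument closes; for Neumann the two factors $\mu_k\,\si^2$ and $V^{2/n}/\si^2$ move in opposite directions, with a genuine $\infty\cdot 0$ competition at $\Theta=-\infty$ (finite limit $2\pi$ in the known case), so no such argument can exist. But the entire content of Conjecture \ref{surfaceareaconjneum} is the quantitative inequality $\mu_k'/\mu_k+(2/n)V'/V>0$ that your reduction arrives at, and this is never established: the required uniform two-sided control of the normalized boundary value $f(b)^2$, across all angular modes $\ell$, indices $k$, dimensions $n$, and both curvatures, is named as the obstacle rather than overcome. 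The proposal is therefore a correct reduction together with an accurate account of why the problem is hard --- a useful framing, but not a proof; the statement remains open after it, just as it is in the paper.
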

(The Neumann case where both $k = 2$ and $n =2$ has already been proven; see Langford and Laugesen \cite[Theorem 6]{LL22}.) 

The limiting values of the functionals are not obvious. Certainly at the origin they should pass through $\lambda_k(\B^n)|\B^n|^{2/n}$ and $\mu_k(\B^n)|\B^n|^{2/n}$ but the endpoint $\Theta = -\infty$ is not clear. The Neumann case $k=2, n =2$ has already been shown to converge to a non-zero limit as $\Theta \to -\infty$, and Figure \ref{neummanincreasingfig} supports that this phenomenon continues in higher dimensions. For the Dirichlet eigenvalues, since $\lambda_1(\Theta)V(\Theta)^{2/n} \to \infty$ as $\Theta \to -\infty$, it must also be the case that $\lambda_k(\Theta)V(\Theta)^{2/n} \to \infty$. For $\Theta \to \pi$, the Dirichlet and Neumann spectra should converge to the spectrum of the whole sphere; see \cite{BKN10} and \cite{BKN19}. More investigation and analysis is needed for this scaling factor. 

\renewcommand{\arraystretch}{1.5}
\begin{table}[t]
\begin{center}
\begin{tabular}{ c  l  l  } 
 \toprule
 Scaling Factor & Geometric Interpretation & Result\\ 
\midrule
 $\sin (\Theta)$ & Perimeter of boundary circle & Monotonic by Theorem \ref{main}  \\   
  
 $\Theta$ & Geodesic radius  & Non-monotonic by Figure \ref{geononmono}\\ 
 
 $\tan(\Theta/2)$ & Stereographic radius & Non-monotonic by Figure \ref{stereononmono} \\ 
 
\rule{0pt}{2.5em}
$V(\Theta)$ & Volume & \shortstack[l]{Open problem \\ ---See Conjectures \ref{surfaceareaconjdirch} and \ref{surfaceareaconjneum}}
  \\[2ex]
 \bottomrule
\end{tabular}
\caption{Dirichlet scaling factor results and conjectures in dimensions $n \geq 3$.} \label{scaletab}
\end{center}
\end{table}

\subsection*{Related results}

Upper and lower bounds and some exact values are known for the Dirichlet eigenvalues in both hyperbolic and spherical spaces. For spherical caps in dimension $n = 3$, we have an exact formula for the first Dirichlet eigenvalue $\lambda_1(\Theta) = \pi^2/\Theta^2 - 1$, which one may verify is monotonically decreasing after multiplying by the scaling factor $\sin^2(\Theta)$. From Borisov and Freitas \cite[Theorem 3.3]{BF17}, we provide a two-sided estimate on the first eigenvalue, which we compute here for dimension $n = 4$ as
\begin{align*}
\frac{\lambda_1(\B^4)}{\Theta^2} - 2 &\leq \lambda_1(\Theta) \leq \frac{\lambda_1(\B^4)}{\Theta^2} - \frac{3}{4}\left(\frac{1}{\sin^2(\Theta)} - \frac{1}{\Theta^2} \right) \hspace{.41cm} \text{for } \Theta > 0, \\
\frac{\lambda_1(\B^4)}{\Theta^2} + 2 &\leq \lambda_1(\Theta) \leq \frac{\lambda_1(\B^4)}{\Theta^2} + \frac{3}{4}\left(\frac{1}{\sinh^2(\Theta)} - \frac{1}{\Theta^2} \right) \hspace{.2cm} \text{for } \Theta < 0. 
\end{align*}
Such bounds cannot prove monotonicity. However, they are sharper estimates than the bounds acquired from Theorem \ref{main}, which are
\begin{align*}
\lambda_1(\Theta) &\leq \frac{\lambda_1(\B^4)}{\sin^2(\Theta)}\hspace{.58cm} \text{for } \Theta > 0, \\
\lambda_1(\Theta) &\geq \frac{\lambda_1(\B^4)}{\sinh^2(\Theta)} \hspace{.36cm} \text{for } \Theta < 0. 
\end{align*}
There is other literature involving ratios of eigenvalues and Faber--Krahn type inequalities by Ashbaugh, Benguria, and Linde \cite{AB95, AB01, BL07}. Estimates for higher Dirichlet eigenvalues can be found in a paper by Berge \cite{B22}.

\section{Two-dimensional Robin monotonicity --- Theorems 1.1 and 1.2}
For this section, we will work exclusively in two dimensions and prove the monotonicity statements in Theorems \ref{alphapos} and \ref{alphaneg}. The proofs of the limiting values are in the next section.

Let $C(\Theta)$ remain as defined above, either as a hyperbolic or spherical cap. 
We consider the Robin eigenvalue problem
\begin{gather*}
    \left\{
    \begin{aligned}
        & -\lapl_{C(\Theta)} u = \lambda u \quad \text{in } C(\Theta), \\
        & \frac{\partial u}{\partial n} + \alpha u = 0 \hspace{0.9cm} \text{on } \partial C(\Theta),\\
    \end{aligned}
    \right.
\end{gather*}
with discrete spectrum
\[
\lambda_1(\Theta, \alpha) < \lambda_2(\Theta, \alpha) \leq \lambda_3(\Theta, \alpha) \leq \dots \to \infty.
\]
The variational characterization of $\lambda_k(\Theta, \alpha)$ is
\[
\lambda_k(\Theta, \alpha) = \min_{\mathcal{L}} \max_{0 \neq u \in \mathcal{L}} \frac{\displaystyle\int_{C(\Theta)} |\grad_{\S^2} u|^2\, dA + \alpha \int_{\partial C(\Theta)} |u|^2\, dS}{\displaystyle\int_{C(\Theta)} |u|^2\, dA}
\]
where $\mathcal{L}$ varies over $k$-dimensional subspaces of $H^1(C(\Theta))$, $dA$ is the 2-dimensional area element, and $dS$ is the appropriate 1-dimensional measure on the boundary. When $\alpha < 0$, at least one negative eigenvalue exists, which impacts monotonicity calculations and proofs. We will separate then our monotonicity discussion into two cases: positive $\alpha$ parameters and negative $\alpha$ parameters.

\subsection*{Positive Robin parameter --- Theorem 1.1}
\begin{proof}
 Fix $\alpha > 0$. We perform the following conformal transformation to the plane under polar coordinates:

\begin{equation}\label{stereo}(\theta, \xi) \mapsto \begin{cases}
 (\tan \frac{\theta}{2}, \xi) & \text{if } \Theta > 0, \\
 (\tanh \frac{\theta}{2}, \xi) & \text{if } \Theta < 0.
\end{cases}
\end{equation}
This map is exactly stereographic projection in the spherical cap case, mapping the spherical cap $C(\Theta)$ to a disk in the plane.

The conformal map defined above transforms the Rayleigh quotient and eigenequation in 
a systematic way. Define the weight functions
\[
w_\pm(r) = \frac{4}{(1 \pm r^2)^2}.
\]
Consider the eigenequation $-\lapl_{C(\Theta)} u = \lambda_k(\Theta, \alpha) u$. Letting $v$ be the pushforward of $u$ under the above map, the equation transforms as
\begin{equation*}\label{weightedlapl}
-\lapl_{\R^2} v = \lambda_k(\Theta, \alpha) w_{\pm} v
\end{equation*}
where the weight is $w_+$ for positively curved spherical caps and $w_-$ for negatively curved geodesic disks. One can also consult \cite[Section 5]{LL22} for more detail on the transformations.

We define $R := \tan(\Theta/2)$ (or $\tanh(|\Theta|/2)$ in the hyperbolic case) for the radius of the disk where $v$ is defined. The Rayleigh quotient for $\lambda_k(\Theta, \alpha)$ transforms by conformal invariance of the Dirichlet integral to

\[
\frac{ \displaystyle \int_{R \D} |\grad v(r, \phi)|^2 \, dA + \alpha \sqrt{w_\pm(R)} \int_{\partial (R \D)} v(R, \phi)^2 \, d\phi}{\displaystyle \int_{R\D} v(r, \phi)^2 w_{\pm}\, dA}
\]
where $dA = r \, dr d\phi$.
The rescaling $g(z) = v(Rz)$ further transforms the expression into
\[
\frac{ \displaystyle \int_{\D} |\grad g(r, \phi)|^2\, dA + \alpha \sqrt{w_\pm(R)} \int_{\partial \D} g(1, \phi)^2 \, d\phi}{\displaystyle \int_{\D} g(r, \phi)^2 w_\pm (Rr) R^2\,dA}.
\]
After normalizing $\alpha$ by the factor present in Theorem \ref{alphapos}, and noting that $\sin \Theta = \sqrt{w_+(R)}$ and $\sinh |\Theta| = \sqrt{w_-(R)}$, the Rayleigh quotient for $\lambda_k(\Theta, \alpha/\si (\Theta))$ becomes
\[
\frac{ \displaystyle \int_{\D} |\grad g(r, \phi)|^2\, dA + \alpha \int_0^{2\pi} g(1, \phi)^2\, d \phi}{\displaystyle \int_{\D} g(r, \phi)^2 w_\pm (Rr) R^2\, dA}.
\]
As the numerator is independent of $R$ and positive (recall $\alpha > 0$), to show parts (i) and (ii) of the theorem we need only show monotonicity with respect to $R$ in the denominator after multiplying by the appropriate scaling factor. The scaling factors transform as
\[
\si^2(\Theta) = \frac{4R^2}{(1\pm R^2)^2}, \qquad 4\ta^2(\Theta/2) = 4R^2,
\]
where $\si (\Theta)$ and $\ta(\Theta)$ were defined in the remark after Theorem \ref{alphaneg}.
Let $S(R)$ be equal to the appropriate scaling factor.
The variational characterization for the scaled eigenvalue is thus
\begin{equation}\label{finalRobinRQ}
\lambda_k(\Theta, \alpha/\si(\Theta)) S(R):= \min_{\mathcal{L}} \max_{0 \neq g \in \mathcal{L}}
\frac{ \displaystyle \int_{\D} |\grad g(r, \phi)|^2 \, dA + \alpha \int_0^{2\pi} g(1, \phi)^2 \, d \phi}{\displaystyle \int_{\D} g(r, \phi)^2 \frac{w_\pm (Rr)}{S(R)} R^2 \, dA}
\end{equation}
where $\mathcal{L}$ varies over $k$-dimensional subspaces of $H^1(\D)$. The crucial point is that the only dependence on $R$ in the expression is the quantity
\[
\frac{w_{\pm}(Rr)}{S(R)}R^2
\]
in the denominator. Langford and Laugesen's proof of \cite[Proposition 9]{LL22} analyzes this quantity and justifies the strict monotonicity statements in Theorem \ref{alphapos}.
\end{proof}
We still need to show the limiting values in the theorem are correct, but we will delay the proof until after analyzing the negative parameters.

\subsection*{Negative Robin parameter --- Theorem 1.2}

Negative parameters require extra care as the eigenvalue could be negative, which reverses the monotonicity results. For spherical caps and geodesic disks, the behavior of negative eigenvalues is fortunately quite tractable, as seen in the following lemma.

\begin{lemma}[Counting negative eigenvalues]\label{unifpos}
    Let $\alpha < 0$ and $\Theta \neq 0$. If $\alpha \in [-m-1, -m)$, then the first $2m+1$ perimeter-normalized eigenvalues $\lambda_k(\Theta, \alpha/\emph{\si} (\Theta))$ are negative. The other eigenvalues are positive, except that when $\alpha = -m-1$ there are two zero eigenvalues, $\lambda_{2m+2} = \lambda_{2m+3} = 0$. The analogous results hold when $\Theta = 0$ for $\lambda_k(\D, \alpha)$.
\end{lemma}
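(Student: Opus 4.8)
The plan is to reduce the sign-counting entirely to a single, $\Theta$-independent quadratic form and then to compute the count by separation of variables on the Euclidean disk. The starting point is the normalized Rayleigh quotient \eqref{finalRobinRQ} derived in the proof of Theorem \ref{alphapos}: after the stereographic change of variables and perimeter normalization, the eigenvalues $\lambda_k(\Theta, \alpha/\si(\Theta))$ arise from the generalized variational problem with numerator
\[
Q(g) = \int_{\D} |\grad g|^2 \, dA + \alpha \int_0^{2\pi} g(1,\phi)^2 \, d\phi
\]
and with a denominator $M_R(g) = \int_{\D} g^2 \, w_\pm(Rr)\, R^2 \, dA$ that, for every admissible $R$, is a positive-definite weighted $L^2$ form (the positive scaling $S(R)$ being irrelevant to signs). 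The key observation is that $Q$ does not depend on $R$, equivalently on $\Theta$. Since the Robin problem has discrete spectrum with only finitely many eigenvalues below any fixed level, the inertia of the generalized eigenvalue problem $Q(g) = \lambda M_R(g)$, that is, the number of negative, zero, and positive eigenvalues, coincides with the inertia of $Q$ relative to any fixed positive-definite reference form, by the infinite-dimensional form of Sylvester's law of inertia. Hence these counts are the same for all $\Theta$, and in particular agree with the flat case $\Theta = 0$, where $M_R$ reduces to the standard $L^2$ form and $Q$ is exactly the numerator of the Euclidean Robin Rayleigh quotient on $\D$. This simultaneously proves the claimed $\Theta$-independence and reduces the lemma to counting the negative and zero eigenvalues of the Euclidean Robin problem on the unit disk $\D$ with parameter $\alpha$.

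For that Euclidean count I would separate variables in polar coordinates, writing $g = f(r) e^{i \ell \phi}$ with $\ell \in \Z$. A negative eigenvalue $\lambda = -\kappa^2$ forces the radial factor regular at the origin to be the modified Bessel function $f(r) = I_{|\ell|}(\kappa r)$, and the Robin condition $f'(1) + \alpha f(1) = 0$ becomes the transcendental equation
\[
\kappa \, I_{|\ell|}'(\kappa) + \alpha\, I_{|\ell|}(\kappa) = 0.
\]
Introducing the logarithmic-derivative ratio $\Phi_\ell(\kappa) := \kappa\, I_\ell'(\kappa)/I_\ell(\kappa)$, which increases strictly from $\Phi_\ell(0^+) = \ell$ to $+\infty$, the equation $\Phi_\ell(\kappa) = -\alpha$ has a unique positive solution precisely when $-\alpha > \ell$, i.e.\ when $\alpha < -\ell$. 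Thus each Fourier mode $\ell$ contributes exactly one negative eigenvalue when $\alpha < -\ell$ and none otherwise; the uniqueness also follows from the fact that $I_{|\ell|}(\kappa r)$ is nodeless on $(0,1]$, so no mode can support a second, node-bearing negative radial state. At the threshold $\alpha = -\ell$ the root sits at $\kappa = 0$, giving the zero eigenvalue with harmonic radial profile $r^{|\ell|}$.

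Counting over modes then finishes the proof. For $\alpha \in [-m-1, -m)$ we have $m < -\alpha \le m+1$, so the modes carrying a negative eigenvalue are exactly $\ell = 0, 1, \dots, m$. The radial mode $\ell = 0$ is simple while each $\ell \in \{1, \dots, m\}$ occurs with multiplicity two (from $e^{\pm i \ell \phi}$), giving $1 + 2m = 2m+1$ negative eigenvalues, which are therefore the smallest $2m+1$ in the spectrum. Modes $\ell \ge m+1$ produce strictly positive eigenvalues, except in the boundary case $\alpha = -m-1$: then mode $\ell = m+1$ sits exactly at the threshold, yielding the double zero eigenvalue $\lambda_{2m+2} = \lambda_{2m+3} = 0$ with profiles $r^{m+1}\cos((m+1)\phi)$ and $r^{m+1}\sin((m+1)\phi)$, and all remaining eigenvalues positive.

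I expect the main obstacle to be the monotonicity of $\Phi_\ell$, namely that $\kappa\, I_\ell'(\kappa)/I_\ell(\kappa)$ increases from $\ell$ to $\infty$; this single fact underlies both the existence and the uniqueness of the per-mode negative eigenvalue. It can be obtained from the Riccati equation satisfied by the logarithmic derivative of $I_\ell$ together with the power-series behavior near $\kappa = 0$ and the large-$\kappa$ asymptotics, or else sidestepped entirely using Sturm nodal theory for the radial operator. The reduction step is the conceptual crux but is technically light; the Bessel monotonicity is where the real care is needed.
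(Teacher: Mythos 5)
Your proposal is correct, but it reaches the count by a genuinely different route than the paper. The paper also passes to the Euclidean disk through the stereographic map, but its counting device is Robin--Steklov duality: a zero eigenvalue of the perimeter-normalized Robin problem occurs exactly when $-\alpha$ is a Steklov eigenvalue, the disk Steklov spectrum $\{0,1,1,2,2,\dots\}$ is known \cite{BFK17, GP17}, and conformal invariance of harmonic functions transports these zero modes to $C(\Theta)$; the count of $2m+1$ negative eigenvalues then follows from strict monotonicity and continuity of the Robin eigenvalues in $\alpha$ together with non-negativity of the Neumann spectrum. You instead freeze $\alpha$ and transport the entire inertia (negative/zero/positive counts) across $\Theta$ in one stroke, via Sylvester's law applied to the $R$-independent numerator $Q$ in \eqref{finalRobinRQ}, and then count on the disk by a self-contained Bessel analysis. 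Both reductions are legitimate; for yours you should record the easy but necessary observation that for each fixed $\Theta\neq 0$ the weight $w_\pm(Rr)R^2/S(R)$ is bounded above and below by positive constants on $\overline{\D}$, so all the quadratic forms share the form domain $H^1(\D)$, the spectra are discrete, and the min--max and kernel characterizations of the negative and zero counts are available uniformly in $\Theta$. As for what each approach buys: the paper's route avoids Bessel functions entirely by citing the known Steklov spectrum, and it exhibits the eigenvalue curves crossing zero as $\alpha$ decreases through the negative integers; your route is self-contained modulo the monotonicity of $\Phi_\ell(\kappa)=\kappa I_\ell'(\kappa)/I_\ell(\kappa)$, which does increase strictly from $\ell$ to $\infty$ --- this follows, for instance, from the expansion $\Phi_\ell(\kappa)=\ell+\sum_{k\geq 1} 2\kappa^2/(\kappa^2+j_{\ell,k}^2)$ with $j_{\ell,k}$ the positive zeros of $J_\ell$, or from the bounds in \cite{S11} that the paper itself invokes for the exterior problem --- and it makes transparent exactly why the perimeter normalization is the right one: it is precisely what renders the numerator $\Theta$-independent.
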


\begin{proof}
The proof relies on the fundamental connection between Robin and Steklov eigenvalues, and so we will mostly focus on integer $\alpha$. Consider the Steklov problem on the unit disk $\D$, which is
\begin{gather*}
    \left\{
    \begin{aligned}
        & -\lapl_{\R^2} g = 0  \hspace{.35cm} \text{in } \D,\\
        & \frac{\partial g}{\partial n} = \sigma g \hspace{0.97cm} \text{on } \partial \D.
    \end{aligned}
    \right.
\end{gather*}
The eigenfunctions and eigenvalues of this problem are well-understood (see \cite{BFK17} and \cite{GP17}.) The eigenvalues repeated according to multiplicity are
\[\sigma \in \{0, 1, 1, 2, 2, 3, 3, \dots \}.\]
Suppose $\Theta > 0$. We will transform the Steklov problem on the disk into a Steklov problem on a spherical cap by reversing the transformations performed in the preceding section. Letting $R := \ta(\Theta/2)$, we perform the rescaling $v(z) = g(z/R)$, so that $v$ is defined on the disk $R \D$. Then, we take the pullback of $v$ along the conformal map $\eqref{stereo}$ and call the pullback $u(\theta, \xi)$, whose domain is the spherical cap $C(\Theta)$. Under these conformal mappings, the Steklov problem on the disk transforms into the perimeter-normalized Steklov problem
\begin{gather*}
    \left\{
    \begin{aligned}
        & -\lapl_{\S^2} u = 0 \hspace{2.2cm} \text{in } C(\Theta),\\
        & \frac{\partial u}{\partial n} + (\alpha / \si (\Theta)) u = 0 \hspace{.6cm} \text{on }\partial C(\Theta),
    \end{aligned}
    \right.
\end{gather*}
where $\alpha = -\sigma$. 

So, the perimeter-normalized Robin problem on a spherical cap or geodesic disk has a zero eigenvalue, $\lambda(\Theta, \alpha/\si (\Theta)) = 0$, precisely when $\alpha$ is a non-positive integer. Recall that Robin eigenvalues are strictly increasing and continuous as a function of $\alpha$. This fact along with the systematic form of the Steklov spectrum on the disk implies there are at least $2m+1$ negative eigenvalues when $\alpha = -m-1$. If there were more than $2m+1$, then the Neumann spectrum $(\alpha = 0)$ would have a negative eigenvalue, which is impossible. It is easy to see $\lambda_{2m+1} < \lambda_{2m+2} = \lambda_{2m+3} = 0$ when $\alpha = -m -1$, as one simply uses the Steklov eigenfunctions as Robin ones. Furthermore, $\lambda_{2m+1} < 0 < \lambda_{2m+2}$ when $\alpha \in (-m-1, m)$.

For $\Theta = 0$, the argument is even simpler as one does not need to perform any conformal mappings and the result is immediate.
\end{proof}
The graphs in \cite{BFK17} give a detailed picture of what is occurring with the Robin eigencurves on the disk. The perimeter normalization is crucial for us so that the number of negative eigenvalues is independent of $\Theta$. That is, if $\lambda_k(\Theta, \alpha/\si(\Theta)) < 0$ for some $\Theta$, then it remains negative for all $\Theta$ values.

We can now show the monotonicity statements in Theorem \ref{alphaneg}.
All calculations resulting in \eqref{finalRobinRQ} are still valid, and Theorem \ref{alphapos} continues to hold for positive eigenvalues. For negative eigenvalues, one notes that if we already know that $\lambda_k(\Theta, \alpha/\si(\Theta)) < 0$, then we can modify the family of subspaces $\mathcal{L}$ in \eqref{finalRobinRQ} to only be those for which the maximizer of the Rayleigh quotient is negative. Using the lemma above, the negativity of the eigenvalue is independent of $\Theta$, and so we can use this modified variational characterization for each value of $\Theta$.

Now, since the numerator is uniformly negative in \eqref{finalRobinRQ}, the monotonicity results reverse direction, and we arrive at the desired statements in Theorem \ref{alphaneg}.
\section{Positive Robin limiting values for Theorem \ref{alphapos}}

We move forward to proving the limits for the scaled Robin eigenvalues. These proofs are long and involved, so we split them over three sections. A plethora of disparate techniques will be employed, with the negative eigenvalues in the next two sections requiring the most interesting and challenging ones. In this section, we prove the limits for positive eigenvalues in Theorems \ref{alphapos}. Each functional has three limits that we need to compute: $\Theta \to -\infty, \Theta \to 0,$ and $\Theta \to \pi$. 

The first and simplest case is when $\Theta \to 0$. We want to show it converges to the Euclidean eigenvalue. Observe that in \eqref{finalRobinRQ}, simple computations show the quantity $w_\pm(Rr)R^2/S(R) \to 1$ uniformly as $R \to 0$  and we recover the Euclidean Rayleigh quotient and eigenvalues.

There are quite a few cases to consider for the two other limiting values of $\Theta$, so we will be doing casework. Before doing so, we state the following Lemma, a Robin generalization of \cite[Lemma 8]{LL22} which states that certain weighted Euclidean eigenvalues have an infinite limit. The Laplacian $\lapl$ in the following lemma is interpreted as the Euclidean Laplacian $\lapl_{\R^2}$.

\begin{lemma}(Pointwise vanishing of the weight)\label{weightedDivergesLemma}
    Let $\Omega \subset \R^2$ be a bounded, regular domain. Fix $\alpha \in \R$. For each $R > 0$, let $w_R: \Omega \to (0,1]$ be a measurable function with $\text{ess inf }w_R > 0.$ Let $\lambda_k(w_R, \alpha)$ be the $k^\text{th}$ Robin eigenvalue of $-w_R^{-1}\lapl.$ Then, there exists an index $N$ independent of $R$ such that $\lambda_k(w_R, \alpha) > 0$ if and only if $k \geq N$. Furthermore, if $\lim_{R \to \infty} w_R = 0$ a.e., then 
    \[
    \lim_{R \to \infty}\lambda_k(w_R, \alpha) = \infty \quad \text{for each $k \geq N$.}
    \]
\end{lemma}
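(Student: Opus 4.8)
The plan is to exploit the fact that the numerator of the Rayleigh quotient does not depend on $R$. Set $Q(u):=\int_\Omega|\grad u|^2\,dA+\alpha\int_{\partial\Omega}u^2\,dS$ and $D_R(u):=\int_\Omega u^2 w_R\,dA$. Since $\text{ess\,inf}\,w_R>0$ and $w_R\le 1$, the form $D_R$ is an inner product uniformly equivalent to the $L^2$ one, so $-w_R^{-1}\lapl$ has discrete spectrum and the usual min--max applies: $\lambda_k(w_R,\alpha)=\min_{\dim\mathcal L=k}\max_{0\ne u\in\mathcal L}Q(u)/D_R(u)$, the minimum being over $k$-dimensional $\mathcal L\subset H^1(\Omega)$.

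For the first claim I would note that the sign of each eigenvalue is $R$-independent. Because $D_R(u)>0$ for $u\ne0$, one has $\lambda_k(w_R,\alpha)\le0$ exactly when there is a $k$-dimensional subspace on which $Q\le0$, a condition not referring to $R$. As the eigenvalues increase with $k$, defining $N$ to be the least index for which no such subspace exists yields $\lambda_k(w_R,\alpha)>0\iff k\ge N$ with $N$ independent of $R$; here $N-1$ is simply the number of non-positive eigenvalues of the ordinary Robin Laplacian, finite since that operator is bounded below with compact resolvent.

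For the divergence, let $\psi_1,\psi_2,\dots$ be $L^2$-orthonormal Robin eigenfunctions with eigenvalues $\nu_1\le\nu_2\le\cdots$, put $V':=\mathrm{span}(\psi_1,\dots,\psi_{N-1})$, and note $Q(u)\ge\nu_N\norm{u}_{L^2}^2$ whenever $u\perp_{L^2}V'$, where $\nu_N>0$ by the definition of $N$. Fix $k\ge N$ and suppose, for contradiction, that $\lambda_k(w_{R_j},\alpha)\le M$ along some $R_j\to\infty$. Taking $\mathcal L_j$ to be the span of the first $k$ eigenfunctions, on which the Rayleigh quotient is at most $\lambda_k\le M$, gives $Q(u)\le M\,D_{R_j}(u)\le M\norm{u}_{L^2}^2$ for all $u\in\mathcal L_j$, using $w_{R_j}\le1$. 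Since $k\ge N>\dim V'$, I can choose $0\ne u_j\in\mathcal L_j\cap(V')^\perp$ with $\norm{u_j}_{L^2}=1$; then $\nu_N\le Q(u_j)\le M$, so $(u_j)$ is bounded in $H^1$ (the Robin form being bounded below modulo $L^2$), while $D_{R_j}(u_j)\ge Q(u_j)/M\ge\nu_N/M>0$. After passing to a subsequence, Rellich compactness gives $u_j\to u_*$ strongly in $L^2$ with $\norm{u_*}_{L^2}=1$. But
\[
D_{R_j}(u_j)=\int_\Omega(u_j^2-u_*^2)w_{R_j}\,dA+\int_\Omega u_*^2 w_{R_j}\,dA\longrightarrow0,
\]
the first integral being bounded by $\norm{u_j^2-u_*^2}_{L^1}\to0$ and the second vanishing by dominated convergence (as $w_{R_j}\to0$ a.e.\ and $u_*^2w_{R_j}\le u_*^2$). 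This contradicts $D_{R_j}(u_j)\ge\nu_N/M$, so no bounded subsequence exists and $\lambda_k(w_R,\alpha)\to\infty$.

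The step I expect to be most delicate is precisely this final contradiction: pointwise decay $w_R\to0$ does not by itself make $D_R(u)$ uniformly small, because the competitor functions could concentrate on a shrinking set where $w_R$ is not yet small. The restriction to $(V')^\perp$, available only because $k\ge N$, is what resolves this: it simultaneously forces a uniform $H^1$ bound (hence $L^2$-precompactness, ruling out concentration in the limit) and a strictly positive lower bound on $D_{R_j}(u_j)$, and these two conclusions cannot coexist. Verifying the equivalence of $D_R$ with the $L^2$ inner product, and the resulting discreteness of the spectrum, is routine but is what legitimizes the min--max throughout.
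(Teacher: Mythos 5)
Your proof is correct, and while your treatment of the index $N$ coincides with the paper's --- both rest on the observation that non-positivity of $\lambda_k(w_R,\alpha)$ is equivalent to the existence of a $k$-dimensional subspace on which the $R$-independent form $Q$ is non-positive, your reformulation via the number of non-positive eigenvalues of the unweighted Robin Laplacian being a cleaner packaging of the paper's comparison of $N_{R_1}$ with $N_{R_2}$ --- your divergence argument takes a genuinely different route. The paper truncates the weight from below by $m_R:=\max(\delta,w_R)$, takes $L^2$-normalized eigenfunctions of $-m_R^{-1}\lapl$, and passes to the limit in the weak eigenequation, using Rellich--Kondrachov together with compactness of the trace operator, to identify the weak limit as an eigenfunction of the unweighted Robin Laplacian; this gives $\liminf_R\lambda_k(m_R,\alpha)\ge\delta^{-1}\lambda_N(1,\alpha)$ for each fixed $\delta$, and divergence follows since $\delta\in(0,1)$ is arbitrary. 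You instead run a compactness--contradiction argument with no truncation: assuming $\lambda_k(w_{R_j},\alpha)\le M$, you intersect the span of the first $k$ weighted eigenfunctions with the $L^2$-orthogonal complement of the first $N-1$ unweighted Robin eigenfunctions to produce normalized $u_j$ satisfying simultaneously $D_{R_j}(u_j)\ge\nu_N/M>0$ and, via Rellich plus dominated convergence, $D_{R_j}(u_j)\to0$. Your route is more economical: it never passes to a limit in the eigenequation, so compactness of the trace operator is not needed --- only the trace inequality making $Q$ bounded below modulo $L^2$, which both proofs require for the $H^1$ bound when $\alpha<0$ --- and the spectral gap $\nu_N>0$ of the unweighted operator on $(V')^\perp$ plays exactly the role that the limiting eigenequation plays in the paper. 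What the paper's argument buys in exchange is a quantitative lower bound for each $\delta$ and an identification of the limiting spectral problem, in structural parallel with the proof of \cite[Lemma 8]{LL22}, of which this lemma is the Robin generalization.
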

\begin{proof}

First, we show the existence of the index $N$. The variational characterization reads
\begin{equation}\label{weightedRQ}
\lambda_k(m_R, \alpha) = \min_{\mathcal{L}} \max_{0 \neq f \in \mathcal{L}} \frac{\displaystyle \int_\Omega |\grad f|^2\, dA + \alpha \int_{\partial \Omega}|f|^2\, dS}{\displaystyle \int_\Omega |f|^2 w_R\, dA}
\end{equation}
where $\mathcal{L}$ varies over $k$-dimensional subspaces of $H^1(\Omega)$, $dA = r \, dr d\phi$ and $dS$ is the appropriate 1-dimensional Hausdorff measure. 

For each $R$, let $N_R$ be the first index $k$ such that $\lambda_{N_R}(w_R,\alpha) > 0$. Let $R_1, R_2$  be arbitrary positive $R$-values. If $N_{R_1} = 1$, then the spectrum is positive, and so $\alpha > 0$ and $N_{R_1} = N_{R_2} = 1$. So, assume $N_{R_1} \geq 2$ and $k < N_{R_1}$. Then, $\lambda_k(w_{R_1}, \alpha) \leq 0$.  By homogeneity of the Rayleigh quotient, we may assume that each trial function $f$ in \eqref{weightedRQ} is $L^2$-normalized so that $\int_\Omega |f|^2 \, dA = 1$. Then, the denominator of the Rayleigh quotient is bounded above and below:
\[
\text{ess inf }w_{R_1} \leq \int |f|^2 w_{R_1} \, dA \leq 1.
\]Since the denominator is bounded and $\lambda_k(w_{R_1}, \alpha) \leq 0$, the variational characterization \eqref{weightedRQ} implies there exists a $k$-dimensional subspace $\mathcal{M}$ of $H^1(\Omega)$ such that
\[
\max_{0 \neq f \in \mathcal{M}} \left(\displaystyle \int_\Omega |\grad f|^2\, dA + \alpha \int_{\partial \Omega}|f|^2\, dS\right) \leq 0.
\]
Using $\mathcal{M}$ as a trial subspace in the variational characterization of $\lambda_k(w_{R_2}, \alpha)$ implies $\lambda_k(w_{R_2}, \alpha) \leq 0$. It follows that $N_{R_1} \leq N_{R_2}$, and $N_{R_1}=N_{R_2}$ by symmetry. Thus, the index $N_R$ is indeed independent of $R$ and we may write it simply as $N$.

We move on to the second statement in the lemma. Fix $0 < \delta < 1$ and define a new weight $m_R := \max (\delta, w_R)$ so that $m_R$ is uniformly bounded away from zero. The $k^{th}$ Robin eigenvalue of the operator $-m_R^{-1}\lapl$ has variational characterization
\begin{equation}\label{deltaRQ}
\lambda_k(m_R, \alpha) = \min_{\mathcal{L}} \max_{0 \neq f \in \mathcal{L}} \frac{\displaystyle \int_\Omega |\grad f|^2\, dA + \alpha \int_{\partial \Omega}|f|^2\, dS}{\displaystyle \int_\Omega |f|^2 m_R\, dA}.
\end{equation}
Assume $k \geq N$. Then, the numerator of the maximizer is positive, and by comparing Rayleigh quotients, we see that $\lambda_k(w_R, \alpha) \geq \lambda_k(m_R, \alpha)$ for each $R$. Furthermore, since $m_R$ is a function into $(0,1]$ with $\text{ess inf }m_R > 0$, we see that $\lambda_k(m_R, \alpha) > 0$ as well.

By the variational characterization, we see that
\begin{equation}\label{deltaBounds}
\delta^{-1} \lambda_k(1, \alpha) \geq \lambda_k(m_R, \alpha) \geq \lambda_k(1, \alpha)
\end{equation}
where $\lambda_k(1, \alpha)$ is the $k^\text{th}$ Robin eigenvalue of the unweighted Laplacian. We will show that $\liminf_{R \to \infty} \lambda_k(m_R, \alpha) \geq \delta^{-1}\lambda_N(1, \alpha)$ which implies $\lambda_k(w_R, \alpha) \to \infty$ as $R \to \infty$.

For each $R$, take a $k^\text{th}$ Robin eigenfunction $f_R$ of $-m_R^{-1}\lapl$. The eigenfunction satisfies the weak equation
\begin{equation}\label{weakrobineq}
\int_\Omega \grad \phi \cdot \grad f_R \, dA + \alpha \int_{\partial \Omega} \phi f_R \, dS= \lambda_k(m_R, \alpha) \int_\Omega \phi f_R m_R \, dA
\end{equation}
for all $\phi \in H^1(\Omega)$. We normalize the eigenfunction $f_R$ in $L^2(\Omega)$ so that $\int_\Omega |f_R|^2 = 1$. Now take a sequence of $R$ values tending to $\infty$. We see that
\[
\int_\Omega  |\grad f_R|^ 2\, dA + \alpha \int_{\partial \Omega}  f_R^2 \, dS = \lambda_k(m_R, \alpha) \int_\Omega f_R^2 m_R \, dA \leq \delta^{-1}\lambda_k(1, \alpha) \int_\Omega f_R^2 \, dA
\]
where the inequality used \eqref{deltaBounds} and that $m_R \leq 1$.
Applying the $L^2$ normalization of $f_R$, we derive an upper bound $\delta^{-1}\lambda_k(1, \alpha)$ which is independent of $R$.

When $\alpha \geq 0$, the above estimate gives us a bound on the $H^1$ norm of $f_R$ that is independent of $R.$ For $\alpha < 0$, some more work has to be done. From the calculations done in \cite[Chapter 5]{L12}, justified by the proof of the trace theorem in Evans \cite[Section 5.5]{E10}, one deduces that
\[
\int_\Omega |\grad f_R|^2 \, dA + \alpha \int_{\partial \Omega} |f_R|^2 \, dA \geq \frac{1}{2} \int_\Omega |\grad f_R|^2 \, dA + C\alpha \int_{\Omega} |f_R|^2 \, dA
\]
for some constant $C$ depending only on $\Omega$. Since $f_R$ is normalized in $L^2$, we again derive a uniform upper bound for the $H^1$ norm of $f_R$ that is independent of $R$.

After applying the Rellich--Kondrachov theorem, we pass to a subsequence of $f_R$ and yield a function $f \in H^1(\Omega)$ such that $f_R \rightharpoonup f$ weakly in $H^1(\Omega)$ and $f_R \to f$ strongly in $L^2(\Omega)$. Furthermore, by \cite[Corollary 18.4]{L17}, the trace operator $T: H^1(\Omega) \to L^2(\partial \Omega)$ is compact, and since $f_R \rightharpoonup f$ weakly in $H^1$, $Tf_R \to T_f$ strongly in $L^2(\partial \Omega).$

We may further assume $\lambda_k(m_R, \alpha)$ converges to a finite limit as $R \to \infty$, after extracting a subsequence. As the weak Robin eigenequation \eqref{weakrobineq} is preserved under weak limits, the calculations in \cite[Lemma 8]{LL22} still apply, and we deduce
\[
- \lapl f = (\delta \lim_{R \to \infty} \lambda_k(m_R, \alpha)) f
\]
in the weak sense. (This step uses the fact that $w_R \to 0$ a.e.\ to imply $m_R \to \delta$ a.e.)
Therefore, $f$ is an eigenfunction of the unweighted Robin Laplacian on $\Omega.$ The eigenvalue $\delta \lim_{R \to \infty} \lambda_k(m_R, \alpha)$ must be positive by \eqref{deltaBounds} since $k \geq N$. It follows that $\delta \liminf_{R \to \infty} \lambda_k(m_R, \alpha) \geq \lambda_N(1, \alpha)$, concluding the proof.
\end{proof}

\subsection*{Scaling factor $4\ta^2(\Theta/2)$}

Now we prove the limits for Theorem \ref{alphapos}(i). Suppose either $\alpha > 0$ and $k \geq 1$ or $\alpha \in [-m-1, -m)$ and $k \geq 2m+2$ ($\geq 2m+4$ if $\alpha = -m-1$.) By Lemma \ref{unifpos}, we know $\lambda_k(\Theta, \alpha / \si(\Theta)) > 0$. 

First consider $\Theta \to \pi$. We apply the lemma above to the weighted eigenvalue $\lambda_k(\Theta, \alpha/\sin \Theta)4\tan^2(\Theta/2)$. The variational characterization \eqref{finalRobinRQ} reads (recalling that $4\tan^2(\Theta/2) = 4R^2$)
\[
\lambda_k(\Theta, \alpha/\sin \Theta ) 4\tan^2(\Theta/2):= \min_{\mathcal{L}} \max_{0 \neq g \in \mathcal{L}}
\frac{ \displaystyle \int_{\D} |\grad g(r, \phi)|^2\, dA + \alpha \int_0^{2\pi} g(1, \phi)^2\, d \phi}{\displaystyle \int_{\D} g(r, \phi)^2 (1+R^2r^2)^{-2}\,dA}.
\]
The weights $w_R := (1+R^2r^2)^{-2}$ satisfy the hypotheses of Lemma \ref{weightedDivergesLemma}, and it follows that $\lambda_k(\Theta, \alpha/\sin \Theta)4\tan^2(\Theta/2) \to \infty$ as $R \to \infty$, that is, as $\Theta \to \pi$.

Let $\Theta \to -\infty$. A modified proof of the limiting case $R \to 1$ and $\Omega = \D$ in \cite[Proposition 9.(i)]{LL22} will be applied. Refer there for more details on the following proof. Choose $h \in C^1([0,1])$ such that $h$ is increasing with $h = 0$ on $[0, 1/4]$ and $h = 1$ on $[3/4, 1]$. Take the subspace spanned by the $H^1$ functions $h(r) \cos j\phi$, $1 \leq j \leq k$, where $(r, \phi)$ are the polar coordinates. By \cite{LL22}, these functions are $L^2$-orthogonal to the constant and to each other, with respect to the weight $(1-R^2r^2)^{-2}$. Further recall $\{\cos(j\phi)\}$ is orthogonal within $L^2([0, 2\pi])$. Using this subspace as a trial function space in \eqref{finalRobinRQ}, and by homogeneity of the Rayleigh quotient, we deduce the upper bound (recalling that $\tanh(|\Theta|/2) = R$)
\begin{multline*}
0 \leq \lambda_k(\Theta, \alpha/ \sinh |\Theta|) 4\tanh^2(\Theta/2) \\ \leq \max_{|c| = 1} \frac{\displaystyle \sum_{j = 1}^k c_j^2  \int_\D |\grad (h(r)\cos j\phi)|^2 \, dA + \alpha \sum_{j = 1}^k c_j^2 \int_0^{2 \pi} |\cos j \phi|^2 \, d \phi}{\displaystyle \sum_{j = 1}^k c_j^2 \int_\D |h(r) \cos j \phi|^2 (1-R^2r^2)^{-2}\, dA}
\end{multline*}
where $c = (c_1, \dots, c_j)$ is the coefficient vector.
As $R \to 1$, each integral in the denominator tends to $\infty$, and the numerator is independent of $R$. It follows that $\lambda_k(\Theta, \alpha/ \sinh |\Theta|) 4\tanh^2(\Theta/2) \to 0$ when $R \to 1$ (i.e. when $\Theta \to -\infty)$.

\subsection*{Scaling factor $\si^2(\Theta)$}

Now consider the limits for Theorem \ref{alphapos}(ii). Let $\Theta \to \pi.$ The inequality $0 \leq \lambda_k(\Theta, \alpha / \sin \Theta)\sin^2(\Theta) \leq \lambda_k(\Theta) \sin^2(\Theta),$ where $\lambda_k(\Theta)$ is the $k^\text{th}$ Dirichlet eigenvalue, always holds for the positive eigenvalues. By \cite[Theorem 1]{LL22}, $\lambda_k(\Theta) \sin^2(\Theta) \to 0$ and thus so does $\lambda_k(\Theta, \alpha / \sin \Theta)\sin^2(\Theta)$.

Let $\Theta \to -\infty.$ We will apply Lemma \ref{weightedDivergesLemma} again. The variational characterization of the weighted eigenvalue $\lambda_k(\Theta, \alpha/ \sinh |\Theta|)\sinh^2(\Theta)$ (recalling that $\sinh |\Theta| = 2R/(1-R^2)$) is
\[
\lambda_k(\Theta, \alpha/\sinh|\Theta|) \sinh^2(\Theta):= \min_{\mathcal{L}} \max_{0 \neq g \in \mathcal{L}}
\frac{ \displaystyle \int_{\D} |\grad g(r, \phi)|^2\, dA + \alpha \int_0^{2\pi} g(1, \phi)^2\, d \phi}{\displaystyle \int_{\D} g(r, \phi)^2 \left(\frac{1-R^2}{1-R^2r^2}\right)^{\!\!2}\,dA}.
\]
Taking $\Theta \to -\infty$ is equivalent to taking $R \to 1$, and one sees the weights $(\frac{1-R^2}{1-R^2r^2})^2$ satisfy the hypotheses of Lemma \ref{weightedDivergesLemma}. (Although the lemma is stated for $R \to \infty$, a simple relabeling lets it work for $R \to 1$.) We conclude that $\lambda_k(\Theta, \alpha/ \sinh |\Theta|)\sinh^2(\Theta) \to \infty$.

\section{Negative eigenvalue limits for Theorem \ref{alphaneg}}
The negative eigenvalues are more delicate and require a thorough analysis. We need some stronger tools than those used in the previous section.

Dirichlet eigenvalues satisfy domain monotonicity, where shrinking the domain causes the eigenvalues to increase. Furthermore, for weighted Laplacian operators with positive eigenvalues, decreasing the weight also increases the eigenvalues. For negative Robin eigenvalues, these phenomena tend to \textit{reverse}. That is, the Robin eigenvalues have a tendency to \textit{decrease} (become more negative) for these processes, letting us derive tractable lower bounds on the eigenvalues.

To be precise, we consider the weighted eigenvalue problem

\begin{gather}\label{robneusys}
    \left\{
    \begin{aligned}
        & -\lapl u = \eta w u \hspace{0.74cm} \text{in } \Omega, \\
        & \frac{\partial u}{\partial n} + \alpha u = 0 \hspace{0.8cm} \text{on } \Sigma,\\
        &\frac{\partial u}{\partial n} = 0 \hspace{1.83cm}\text{on } \partial \Omega \setminus \Sigma,\\
    \end{aligned}
    \right.
\end{gather}
where $\Omega$ is a bounded domain with piecewise smooth boundary, $\Sigma \subset \partial \Omega$ is a piecewise smooth, non-empty portion of the boundary, and $w$ is some smooth positive weight function on $\overline{\Omega}$. Then, $\eta_k(\Omega, \Sigma, \alpha, w) := \eta_k$ forms a discrete spectrum
\[
\eta_1 < \eta_2 \leq \eta_3 \leq \cdots
\]
(In the case $\Sigma = \partial \Omega$, we recover a standard weighted Robin problem.) We will typically suppress notation when it is clear from context. If $\alpha$ and $w$ are clear from context, or are unchanging, then we usually denote the eigenvalues as $\eta_k(\Omega, \Sigma)$. If the Robin boundary portions are clear from context, or are unchanging, then we usually write $\eta_k(\Omega, w)$. 

The weighted Robin-Neumann eigenvalue problem allows us to control negative eigenvalues from below by shrinking the domain or decreasing the weight:

\begin{lemma}[Reversed Robin monotonicity]\label{negrobmono}
    Fix the Robin parameter $\alpha < 0$, and let $k \geq 1$.
    
    \begin{enumerate}[(i)]
    
    \item (Domain monotonicity.) Let $\Omega_1, \Omega_2 \subset \R^n$ be bounded domains with piecewise smooth boundaries. Suppose $\Sigma_1 \subset \partial \Omega_1$ and $\Sigma_2 \subset \partial \Omega_2$ are piecewise smooth, non-empty portions of the boundaries and $w$ is positive and smooth on $\overline{\Omega}$. Assume $\Omega_1 \supset \Omega_2$ and $\Sigma_1 \subset \Sigma_2$. If $\eta_k(\Omega_1, \Sigma_1) < 0$, then 
    \[\eta_k(\Omega_2, \Sigma_2) \leq \eta_k(\Omega_1, \Sigma_1) < 0.\]
    \item (Weight monotonicity.) Let $\Omega \subset \R^n$ be a bounded domain with piecewise smooth boundary. Suppose $w_1, w_2$ are two positive smooth weight functions on $\overline{\Omega}$. Assume $w_1 \geq w_2$. If $\eta_k(\Omega, w_1) < 0$, then 
    \[\eta_k(\Omega, w_2) \leq \eta_k(\Omega, w_1) < 0.\]

    \end{enumerate}
\end{lemma}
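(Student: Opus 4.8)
The plan is to prove both parts from the min--max characterization
\[
\eta_k = \min_{\mathcal{L}} \max_{0 \neq u \in \mathcal{L}} \frac{\displaystyle\int_\Omega |\grad u|^2\, dA + \alpha \int_\Sigma |u|^2\, dS}{\displaystyle\int_\Omega |u|^2 w\, dA},
\]
with $\mathcal{L}$ ranging over $k$-dimensional subspaces of $H^1$, exploiting the same principle used for Theorem \ref{alphaneg}: once an eigenvalue is known to be negative, the optimal $k$-dimensional subspace $\mathcal{L}^\ast$ (the span of the first $k$ eigenfunctions) has the property that the numerator quadratic form $Q(u) := \int_\Omega |\grad u|^2\, dA + \alpha \int_\Sigma |u|^2\, dS$ is strictly negative on $\mathcal{L}^\ast \setminus \{0\}$, since every Rayleigh quotient there is $\le \eta_k < 0$ while the denominator is positive. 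Writing $R = Q/D$ for the Rayleigh quotient with denominator $D$, the whole argument rests on the elementary observation that for fixed negative numerator $Q(u) < 0$ the value $Q(u)/D$ is a \emph{decreasing} function of the positive denominator $D$; so shrinking the denominator and/or decreasing the numerator can only push the quotient down.

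I would dispatch part (ii) first, as it is the cleaner case. Here $\Omega$ and $\Sigma$ are fixed, so $\mathcal{L}^\ast$ for the weight $w_1$ is immediately an admissible trial space for $w_2$. The numerator $Q(u)$ is identical for both problems, while $w_1 \ge w_2 > 0$ forces $\int_\Omega |u|^2 w_2\, dA \le \int_\Omega |u|^2 w_1\, dA$. Since $Q(u) < 0$ on $\mathcal{L}^\ast$, the monotonicity of the quotient in its denominator gives, termwise, $R_{w_2}(u) \le R_{w_1}(u)$ for every $u \in \mathcal{L}^\ast$; taking the maximum over $\mathcal{L}^\ast$ and then the minimum over all trial spaces yields $\eta_k(\Omega, w_2) \le \max_{\mathcal{L}^\ast} R_{w_2} \le \eta_k(\Omega, w_1) < 0$.

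Part (i) follows the same template but transfers the trial space by \emph{restriction}: I would take $\mathcal{L}^\ast$ to be the span of the first $k$ eigenfunctions of $(\Omega_1, \Sigma_1)$ and use their restrictions to $\Omega_2 \subset \Omega_1$ as a trial space for $(\Omega_2, \Sigma_2)$. Two comparisons drive the estimate. For the numerator, $\int_{\Omega_2} |\grad u|^2\, dA \le \int_{\Omega_1} |\grad u|^2\, dA$ because $\Omega_2$ is smaller, while $\Sigma_1 \subset \Sigma_2$ and $|u|^2 \ge 0$ give $\int_{\Sigma_2} |u|^2\, dS \ge \int_{\Sigma_1} |u|^2\, dS$, so multiplying by $\alpha < 0$ reverses this and makes the boundary term smaller; hence $Q_{\Omega_2}(u) \le Q_{\Omega_1}(u) < 0$. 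For the denominator, $\int_{\Omega_2} |u|^2 w\, dA \le \int_{\Omega_1} |u|^2 w\, dA$ since $w > 0$. Combining a smaller negative numerator over a smaller positive denominator through the monotonicity observation gives $R_{\Omega_2}(u) \le R_{\Omega_1}(u)$ for each restricted $u$, and the min--max conclusion $\eta_k(\Omega_2, \Sigma_2) \le \eta_k(\Omega_1, \Sigma_1) < 0$ follows as before.

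The step I expect to be the main obstacle is the one technical gap in part (i): I must check that restriction to $\Omega_2$ does not collapse the dimension of the trial space, i.e.\ that the restricted eigenfunctions stay linearly independent in $H^1(\Omega_2)$. I would handle this by unique continuation. If a combination $u = \sum_i c_i u_i$ vanished on the open set $\Omega_2$, then, grouping $u = \sum_j v_j$ by its components $v_j$ in the distinct eigenspaces, repeatedly applying the elliptic operator $-w^{-1}\lapl$ (each application annihilates $u$ on $\Omega_2$, where it vanishes identically) yields $\sum_j \mu_j^m v_j = 0$ on $\Omega_2$ for all $m \ge 0$; a Vandermonde argument across the distinct eigenvalues $\mu_j$ then forces each $v_j$ to vanish on $\Omega_2$. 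Aronszajn's unique continuation theorem for $-\lapl v_j = \mu_j w v_j$ propagates each $v_j$ to zero on the connected set $\Omega_1$, forcing all $c_i = 0$. Everything else is bookkeeping with the sign of $\alpha$ and the monotonicity of $Q/D$, but this injectivity of restriction is where the genuine analytic input lies.
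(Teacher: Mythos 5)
Your proof is correct and follows essentially the same route as the paper: the paper also works from the min--max characterization, restricts attention to $k$-dimensional subspaces on which the numerator form is negative (it lets $\mathcal{L}$ range over the whole class of such subspaces where you use the eigenfunction span, an immaterial difference), and then compares the two Rayleigh quotients termwise using exactly your observation that a fixed negative numerator over a smaller positive denominator becomes more negative. The one place you diverge is the step you flag as the crux: linear independence of the restricted trial functions. Your unique continuation argument is valid (granted $\Omega_1$ connected, which is implicit in ``domain''), but it is much heavier than needed, and the sign condition you have already established dispatches the point for free: if some $0 \neq u \in \mathcal{L}^\ast$ had $u|_{\Omega_2} \equiv 0$, then $Q_{\Omega_2}(u|_{\Omega_2}) = 0$, contradicting your own termwise estimate $Q_{\Omega_2}(u|_{\Omega_2}) \leq Q_{\Omega_1}(u) < 0$. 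So restriction is automatically injective on any subspace on which the numerator form is negative; no Aronszajn-type input is required, and this is presumably why the paper passes over the issue without comment.
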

\begin{figure}[]
\centering
\begin{tikzpicture}

    \filldraw[draw=black, fill=gray!50] plot[smooth cycle, tension=0] coordinates {
     (7, 0) (11, 1) (11, -4) (8, -2)
    };

    \draw[blue, ultra thick] (11, -4) -- (8, -2);

    \draw[black, dashed] (13, -4) -- (11, -4);
    \draw[black, dashed] (13, 1.5) -- (11,1);
    \draw[black, dashed] (13, -4) -- (13, 1.5);

    \draw[blue, ultra thick] (7, 0) -- (8, -2);

    \node at (9, -3.2) {\(\Sigma_2\)};
    \node at (9.3, -1) {\(\Omega_2\)};

    \filldraw[draw=black, fill=gray!50] plot[smooth cycle, tension=0] coordinates {
     (0, 0) (6, 1.5) (6, -4) (4, -4) (1, -2)
    };

    \draw[blue, ultra thick] (4, -4) -- (1, -2);

    \node at (2, -3.2) {\(\Sigma_1\)};
    \node at (3.3, -1) {\(\Omega_1\)};
    
\end{tikzpicture}
\caption{The domain $\Omega_1$ shrinks to $\Omega_2$, and the Robin boundary portion $\Sigma_1$ enlarges to $\Sigma_2$. By Lemma \ref{negrobmono}, the negative, mixed Robin-Neumann eigenvalues on $\Omega_1$ decrease to the corresponding ones on $\Omega_2$.}
\end{figure}
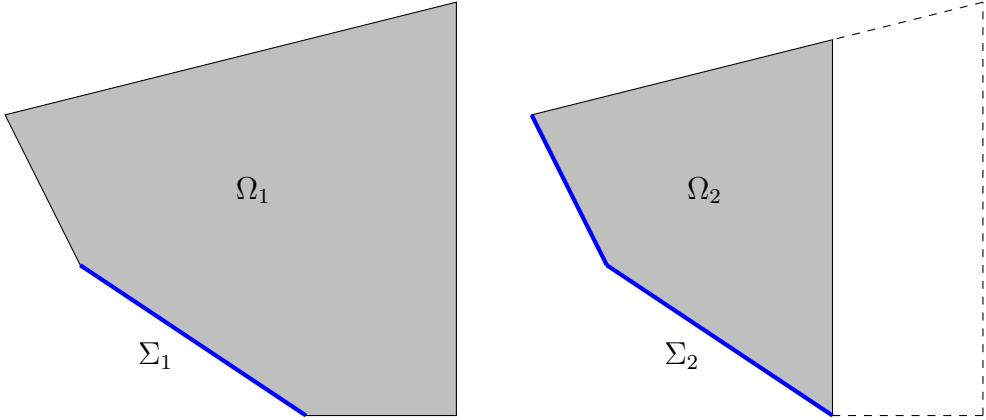
\begin{figure}[]
\centering 
\begin{tikzpicture}
    \filldraw[draw=black, fill = gray!50] (6,0) circle (2cm);
    \filldraw[draw = black, fill = white] (6,0) circle (1cm);
    \draw[blue, ultra thick] (6,0) circle (2cm);
    \node at (8.4, 0) {\(\Sigma_2\)};
    \node at (6, 1.5) {\(\Omega_2\)};

    \filldraw[draw = black, fill =gray!50] (0,0) circle (2cm);
    \draw[blue, ultra thick] (0,0) circle (2cm);
    \node at (2.4,0) {\(\Sigma_1\)};
    \node at (0, 0) {\(\Omega_1\)};
\end{tikzpicture}
\caption{Here $\Omega_1 \supset \Omega_2$ and $\Sigma_1 = \Sigma_2$. By Lemma \ref{negrobmono}, the negative eigenvalues for the pure Robin problem on the disk $\Omega_1$ are greater than or equal to the corresponding Robin-Neumann eigenvalues on the annulus $\Omega_2$. }
\label{annulusvscircle}
\end{figure}
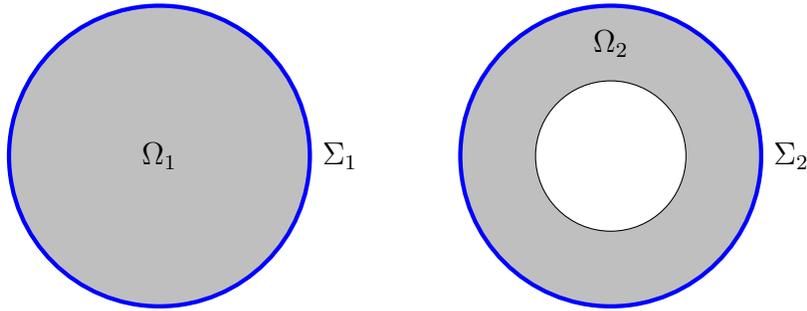
\begin{proof}
We will focus on (i) first.
The variational characterization of \eqref{robneusys} reads
\[
\eta_k(\Omega, \Sigma) = \min_{\mathcal{L}} \max_{0 \neq f \in \mathcal{L}} \frac{\displaystyle \int_\Omega |\grad f|^2 \, dA + \alpha \int_\Sigma |f|^2 \, dS}{\displaystyle \int_\Omega |f|^2 w \,dA}
\]
where $\mathcal{L}$ varies over $k$-dimensional subspaces of $H^1(\Omega)$. Suppose $\eta_k(\Omega_1, \Sigma_1) < 0$. We allow $\mathcal{L}$ to vary only over $k$-dimensional subspaces of $H^1(\Omega_1)$ such that the quantity
\[
\displaystyle \int_{\Omega_1} |\grad f|^2\, dA + \alpha \int_{\Sigma_1} |f|^2 \, dS
\]
is negative for all $f \in \mathcal{L}\setminus \{0\}$. These subspaces must exist since the eigenvalue is negative. Call the class of such subspaces $\mathfrak{N}$. It follows immediately that if $\Omega_1 \supset \Omega_2$ and $\Sigma_1 \subset \Sigma_2$ then
\begin{align*}
    \eta_k(\Omega_1, \Sigma_1) &= 
\min_{\mathcal L \in \mathfrak{N}} \max_{0 \neq f \in \mathcal{L}} \frac{\displaystyle \int_{\Omega_1} |\grad f|^2 \, dA + \alpha \int_{\Sigma_1} |f|^2 \, dS}{\displaystyle \int_{\Omega_1} |f|^2 w \,dA} \\ &\geq \min_{\mathcal L \in \mathfrak{N}} \max_{0 \neq f \in \mathcal{L}}  \frac{\displaystyle \int_{\Omega_2} |\grad f|^2  \,dA + \alpha \int_{\Sigma_2} |f|^2  \,dS}{\displaystyle \int_{\Omega_2} |f|^2 w \,dA} \geq \eta_k(\Omega_2, \Sigma_2),
\end{align*}
since the denominator is being made less positive and the numerator more negative. 

The proof of (ii) is similar: one restricts to negative numerators, and then the integrals decrease as the weight is decreased.
\end{proof}
The above lemma gives a tractable lower bound on negative Robin eigenvalues (perhaps with a weight) by taking $\Sigma_1 = \Sigma_2 = \partial\Omega_2$ and then comparing Robin eigenvalues of the disk $\Omega_1$ with Robin-Neumann eigenvalues of an annulus $\Omega_2$ as in Figure \ref{annulusvscircle}. 
\subsection*{Scaling factor $4\ta^2(\Theta/2)$}

Let $\Theta \to \pi$. We will show $\lambda_k(\Theta, \alpha/\sin \Theta)4\tan^2(\Theta/2) \to -\infty.$ Since the eigenvalue is negative for all $\Theta$ by Lemma \ref{unifpos}, it is negative at $\Theta = 0$ with eigenfunction span $\{f_1, \dots, f_k\}$. Computing \eqref{finalRobinRQ}, we have the upper bound
\[
\lambda_k(\Theta, \alpha/\sin \Theta)4\tan^2(\Theta/2) \leq \max_{f \in \text{span}\{f_1, \dots, f_k\}} \frac{\displaystyle \int_\D |\grad f|^2\, dA + \alpha \int_{\partial \D} f^2\, dS}{\displaystyle \int_\D |f|^2 (1+R^2r^2)^{-2} \,dA}.
\]
The numerator is negative for all linear combinations $f$ by negativity of the eigenvalue at $R = 0$. Additionally, the only dependency of this expression on $R$ is in the denominator, and tending $R \to \infty$ (i.e., $\Theta \to \pi$) causes the denominator to decrease to 0 for all $r > 0$. Thus, the left-hand side must go to $-\infty$.

Now let $\Theta \to -\infty$. To prove $\lambda_k(\Theta, \alpha/\sinh|\Theta|)4\tanh^2(\Theta/2) \to 0$, it is sufficient to show this is the case for the first eigenvalue. Since the Robin ground state is radial on the geodesic disk, the variational characterization \eqref{finalRobinRQ} can be written (recalling $R = \tanh(|\Theta|/2))$
\[
\lambda_1(\Theta, \alpha/\sinh|\Theta|)4\tanh^2(\Theta/2) = \min_{g \in C^1([0,1])} \frac{\displaystyle \int_0^1 g'(r)^2 r\, dr +  \alpha g(1)^2}{\displaystyle \int_0^1 g(r)^2 (1-R^2r^2)^{-2}r\, dr}
\]
where $g$ varies over the space of $C^1$ functions on $[0,1]$.
We may assume the numerator is negative since the eigenvalue is. From Lemma \ref{negrobmono}, we may shrink the domain from $[0,1]$ to $[R,1]$ and decrease the variational characterization. We thus see that
\[
\lambda_1(\Theta, \alpha/\sinh|\Theta|)4\tanh^2(\Theta/2) \geq \min_{g \in C^1([R,1])} \frac{\displaystyle \int_R^1 g'(r)^2 r\, dr +  \alpha g(1)^2}{\displaystyle \int_R^1 g(r)^2 (1-R^2r^2)^{-2}r\, dr}.
\]
By making the numerator more negative and the denominator less positive, we further deduce
\[
\frac{\displaystyle \int_R^1 g'(r)^2 r\, dr + \alpha g(1)^2}{\displaystyle \int_R^1 g(r)^2 (1-R^2r^2)^{-2}r\, dr} \geq \frac{\displaystyle \int_R^1 g'(r)^2 R\, dr +  \alpha g(1)^2}{\displaystyle \int_R^1 g(r)^2 (1-R^4)^{-2}R\, dr},
\]
where we used that $r \geq R$ and $(1-R^2r^2)^{-2} \geq (1-R^4)^{-2}$ for $r \in [R,1]$.
After performing the linear rescaling $r \mapsto \frac{r-1}{R-1}$ and doing some algebra on the $R$-terms, we conclude that
\[
\lambda_1(\Theta, \alpha/\sinh|\Theta|)4\tanh^2(\Theta/2) \geq \frac{(1-R^4)^2}{(1-R)^2 } \min_{g \in C^1([0,1])} \frac{\displaystyle \int_0^1 g'(r)^2 \, dr + \frac{(1-R)}{R} \alpha g(0)^2}{\displaystyle \int_0^1 g(r)^2  dr}.
\]
The variational characterization on the right-hand side represents the lowest eigenvalue of a mixed Robin-Neumann problem on the interval $[0,1]$ with Robin parameter $\frac{(1-R)\alpha}{R}$. By letting $R \to 1$ (which is $\Theta \to -\infty)$, the parameter tends to 0 and the eigenvalue converges to the lowest pure Neumann eigenvalue, which is $0$. (One can justify that this limit is accurate by bounding below by a pure Robin problem, or by formally letting $R \to 1$ in the Rayleigh quotient.) Since $\frac{(1-R^4)^2}{(1-R)^2} \to 16$, we conclude that $\lambda_1(\Theta, \alpha/\sinh |\Theta|)4\tanh^2(\Theta/2) \to 0$ and the claim follows.

\subsection*{Scaling factor $\si^2(\Theta)$}

The case $\Theta \to \pi$ is long and involved, and we place it in the next section

Let $\Theta \to -\infty$. To see that $\lambda_k(\Theta, \alpha/\sinh|\Theta|)\sinh^2(\Theta) \to -\infty$, let $f_1, \dots, f_k$ be the first $k$ Robin eigenfunctions for the Euclidean unit disk ($\Theta = 0$). By \eqref{finalRobinRQ}, we have the upper bound (recalling $\sinh|\Theta| = 2R/(1-R^2))$
\[
\lambda_k(\Theta, \alpha/\sinh|\Theta|)\sinh^2(\Theta)\leq 
\max_{|c| = 1} \frac{ \,\displaystyle \sum_{j=1}^k c_j^2\left( \int_\D |\grad f_j|^2 \, dA + \alpha
 \int_{\partial \D} |f_j|^2 dS\right)}{\displaystyle \sum_{j=1}^k c_j^2  \int_\D |f_j|^2 \left( \frac{1-R^2}{1-R^2r^2} \right)^{ \!\! 2} dA},
\]
where $c = (c_1, \dots, c_k)$ is the coefficient vector. The numerator is independent of $R$ and negative, so we wish to show the quantities $\int_\D |f_j|^2 \left( \frac{1-R^2}{1-R^2r^2} \right)^{ \!\! 2} dA$ tend towards $0$ as $R \to 1$. Note that the weight is bounded above by $1$ so that each $|f_j|^2 \left(\frac{1-R^2}{1-R^2r^2}\right)^2$ is bounded above by the integrable function $|f_j|^2$, and the weights tend pointwise to 0 as $R\to 1$. The claim follows from the dominated convergence theorem.

\section{Exotic limit towards exterior eigenvalue}

Consider the scaled spherical eigenvalue $\lambda_k(\Theta, \alpha/\sin\Theta)\sin^2(\Theta)$ and let $\Theta \to \pi$. In this case, we are claiming the scaled eigenvalue approaches the eigenvalue for a Robin exterior problem. Namely, we define the eigenvalue problem
\begin{equation}\label{exteriorpde}
    \left\{
    \begin{aligned}
        & -\lapl u = \lambda_k^\text{ext}(\D, \alpha)u  \quad \text{in } \mathbb{R}^2 \setminus \mathbb{D}, \\
        & \partial_{\nu} u - \alpha u = 0 \hspace{1.62cm} \text{on }  \partial\mathbb{D},
    \end{aligned}
    \right.
\end{equation}
where $\partial_v$ is the \textit{inner} normal derivative, i.e.,\ pointing into $\R^2 \setminus \D$. We call $\lambda_k^\text{ext}(\D, \alpha)$ the $k^\text{th}$ exterior Robin eigenvalue, and we claim for negative eigenvalues that 
\[\lambda_k(\Theta, \alpha/\sin \Theta)\sin^2(\Theta) \to \lambda_k^\text{ext}(\D, \alpha)\]
as $\Theta \to \pi$.

At first glance, the eigenvalue problem does not seem well-posed since the embedding $H^1(\R^2\setminus\D)$ into $L^2(\R^2 \setminus \D)$ is not compact. Indeed, the problem has essential spectrum equal to $[0, \infty)$ by \cite{KL18, KL20}. The negative eigenvalues, however, \textit{are} discrete and satisfy a variational characterization by \cite[Proposition 2.1]{KL23}. If there are $k$ negative eigenvalues, then
\begin{equation}\label{exteriorrq}
\lambda_k^\text{ext}(\D, \alpha) = \min_\mathcal{L} \max_{0 \neq f \in \mathcal{L}} \frac{\displaystyle \int_{\R^2 \setminus \D} |\grad f|^2\, dA + \alpha \int_{\partial \D} |f|^2\, dS}{\displaystyle \int_{\R^2 \setminus \D} |f|^2\, dA}
\end{equation}
where $\mathcal{L}$ varies over $k$-dimensional subspaces of $H^1(\R^2\setminus\D)$.

Consider the variational characterization \eqref{finalRobinRQ} of the spherical eigenvalue with scaling factor $\sin^2(\Theta) = 4R^2/(1+R^2)^2$. After one performs the conformal map $z \mapsto 1/z$ on the Rayleigh quotient in \eqref{finalRobinRQ}, the variational characterization reads
\begin{equation}\label{limitingRQ}
\lambda_k(\Theta, \alpha/\sin\Theta)\sin^2(\Theta)= \min_\mathcal{L}\max_{0 \neq f \in \mathcal{L}} \frac{\displaystyle \int_{\R^2 \setminus \D} |\grad f|^2\, dA + \alpha \int_{\partial \D} |f|^2\, dS}{\displaystyle \int_{\R^2 \setminus \D} |f|^2 \left( \frac{1+R^2}{r^2+R^2} \right)^{\!\! 2}\, dA}
\end{equation}
where $\mathcal{L}$ now varies over $k$-dimensional subspaces of the function space
\begin{equation}\label{hextspace}
H_\text{ext}(\R^2 \setminus \D) := \left\{ f : \int_{\R^2 \setminus \D} |f|^2 |z|^{-4} \, dA + \int_{\R^2 \setminus \D} |\grad f|^2 \, dA < \infty\right\}.
\end{equation}
The function space $H_\text{ext}(\R^2 \setminus \D)$ is \textit{not} the same as $H^1(\R^2 \setminus \D)$.  Note  that  $H^1(\R^2 \setminus \D) \subset H_\text{ext}(\R^2 \setminus \D)$.

Formally, if one takes the variational characterization $\eqref{limitingRQ}$ and lets $R \to \infty$, without regard for interchanging limits or different function spaces, the variational characterization \eqref{exteriorrq} is recovered. Of course, we \textit{do} care about interchanging limits and different function spaces, and so we must proceed quite delicately.

There are quite a few steps to the proof, and so we give a brief overview of the attack plan:

\begin{enumerate}[(i)]
    \item Show that the number of discrete negative eigenvalues is the same for both problems, so that the convergence makes sense. \label{stepone}\\
    \item Bound the spherical eigenvalues above by the exterior eigenvalues, and below by the eigenvalues of a problem related to the exterior problem, namely a Robin-Neumann problem on a large annulus.\label{steptwo}\\
    \item Justify that separation of variables applies to the exterior and annular eigenvalues, and reduce the problem to convergence of their respective radial modes.\label{stepthree}\\
    \item Prove the radial modes of the annular eigenvalues converge to the radial modes of the exterior eigenvalues as the outer radius of the annulus goes to $\infty$, by analyzing the implicit eigenvalue equations. \label{stepfour}
\end{enumerate}
Each step is proven in their respectively numbered subsections: \ref{steponeproof}, \ref{steptwoproof}, \ref{stepthreeproof}, \ref{stepfourproof}. 

\subsection{Step (\ref{stepone}): counting negative eigenvalues}\label{steponeproof}
We first show that the dimension of the negative eigenspace is equal for both the exterior problem and the spherical cap problem, and thus there is a variational characterization to converge to as $R \to \infty.$ We computed the dimension of the negative eigenspace for the spherical cap previously in Lemma \ref{unifpos}. The following lemma shows the dimension remains the same for the exterior problem.

\begin{lemma}[Counting exterior eigenvalues]\label{equalNegDim}
   If $\alpha \in [-m-1, -m)$ where $m$ is a non-negative integer, then there exist $2m+1$ negative discrete exterior eigenvalues $\lambda_k^\text{ext}(\D, \alpha)$, $k = 1, \dots, 2m+1.$
\end{lemma}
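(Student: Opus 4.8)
The plan is to compute the negative exterior eigenvalues essentially explicitly by separation of variables, exploiting the fact that the exterior problem on $\R^2 \setminus \D$ with a radial domain admits a Fourier decomposition in the angular variable. Writing $f(r,\phi) = \rho(r)e^{ij\phi}$ for $j \in \Z$, the equation $-\lapl u = \lambda u$ with $\lambda < 0$ separates into a radial ODE whose decaying (i.e., $L^2$-integrable at infinity) solutions are modified Bessel functions $K_j(\sqrt{-\lambda}\, r)$. The Robin condition $\partial_\nu u - \alpha u = 0$ on $\partial \D$, with $\partial_\nu$ the inner normal (pointing away from the disk into the exterior, so $\partial_\nu = \partial_r$ at $r=1$), then yields an implicit eigenvalue equation in each angular sector $j$. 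The main technical content of the lemma is to count, for each fixed $\alpha \in [-m-1,-m)$, how many of these sectors contribute a negative eigenvalue.

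First I would reduce to counting via the angular index. For each $j \geq 0$, substituting $\rho(r) = K_{|j|}(\tau r)$ with $\tau = \sqrt{-\lambda} > 0$ into the boundary condition produces a transcendental equation of the form $\tau K_{|j|}'(\tau) = \alpha K_{|j|}(\tau)$, equivalently $\tau K_{|j|}'(\tau)/K_{|j|}(\tau) = \alpha$. The key monotonicity fact is that the logarithmic derivative $g_j(\tau) := \tau K_{|j|}'(\tau)/K_{|j|}(\tau)$ is a strictly monotone function of $\tau$ on $(0,\infty)$ whose range determines exactly when a solution $\tau > 0$ (hence a negative eigenvalue $\lambda = -\tau^2$) exists. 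Using the known asymptotics $K_\nu(\tau) \sim \tfrac{1}{2}\Gamma(\nu)(\tau/2)^{-\nu}$ as $\tau \to 0^+$ for $\nu > 0$ (and the logarithmic behavior for $\nu = 0$), one finds $g_j(\tau) \to -|j|$ as $\tau \to 0^+$, while $g_j(\tau) \to -\infty$ as $\tau \to \infty$ (since $K_\nu$ decays like $e^{-\tau}$). Hence $g_j$ maps $(0,\infty)$ onto $(-\infty, -|j|)$ monotonically, so the equation $g_j(\tau) = \alpha$ has a (unique) positive solution precisely when $\alpha < -|j|$, i.e. when $|j| < -\alpha$.

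Next I would tally the multiplicities. For $\alpha \in [-m-1,-m)$ we have $-\alpha \in (m, m+1]$, so the condition $|j| < -\alpha$ is satisfied exactly for $|j| \in \{0,1,\dots,m\}$. The sector $j = 0$ contributes one eigenvalue (the radial mode), while each $|j| = 1,\dots,m$ contributes a two-dimensional eigenspace spanned by $\cos(j\phi)$ and $\sin(j\phi)$. This gives a total of $1 + 2m$ negative eigenvalues, matching the count $2m+1$ asserted in the lemma and, satisfyingly, the count for the spherical cap in Lemma \ref{unifpos}. I should also confirm that no eigenvalue is double-counted and that the modified Bessel solutions indeed lie in the relevant function space, which is immediate from their exponential decay; by \cite[Proposition 2.1]{KL23} these are genuine discrete eigenvalues below the essential spectrum $[0,\infty)$.

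The main obstacle I anticipate is establishing the strict monotonicity and the precise endpoint limits of $g_j(\tau) = \tau K_{|j|}'(\tau)/K_{|j|}(\tau)$ rigorously, since this is where all the counting is secretly encoded. The monotonicity can be obtained from the Bessel differential equation together with standard recurrence relations for $K_\nu$ (for instance $K_\nu'(\tau) = -\tfrac{1}{2}(K_{\nu-1}(\tau)+K_{\nu+1}(\tau))$ and the fact that $\tau \mapsto K_\nu(\tau)$ is positive, decreasing, and log-convex), which forces $g_j$ to be strictly decreasing; alternatively one can argue variationally, using that the number of negative eigenvalues in each angular sector equals the number of trial functions making the sectorial quadratic form negative. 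The small-$\tau$ asymptotics require care to handle the $j=0$ case separately because of the logarithmic singularity of $K_0$, but the qualitative conclusion $g_0(\tau) \to 0^-$ is unchanged, so the $j=0$ sector still contributes exactly when $\alpha < 0$.
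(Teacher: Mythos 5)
Your proposal follows essentially the same route as the paper's proof: separate variables, identify the decaying radial solutions as modified Bessel functions $K_j(\sqrt{-\lambda}\,r)$, reduce to the transcendental boundary equation $\tau K_j'(\tau) = \alpha K_j(\tau)$, and tally the sectors $|j| \leq m$ that admit a root, with multiplicity two for $j \geq 1$. Indeed, by the recurrence $K_j'(z) = -K_{j-1}(z) - \tfrac{j}{z}K_j(z)$ your function $g_j(\tau) = \tau K_j'(\tau)/K_j(\tau)$ equals $-\tau K_{j-1}(\tau)/K_j(\tau) - j$, so your equation $g_j(\tau) = \alpha$ is exactly the paper's equation \eqref{omegaBoundEq} in disguise, and your small-$\tau$ limit $g_j \to -j$ matches what the paper extracts from Segura's bounds.

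The one step with a genuine gap is your justification of the monotonicity of $g_j$, which you correctly identify as the crux. Log-convexity of $K_j$ says $u := K_j'/K_j$ is increasing; but then $g_j = \tau u$ has derivative $u + \tau u'$, a sum of a negative and a positive term, so nothing follows: compare $u = -1/\tau^2$ (increasing, negative, with $\tau u$ \emph{increasing}) against $u \equiv -1$ (with $\tau u$ decreasing). Concretely, the Riccati equation obtained from the modified Bessel ODE gives $g_j' = (\tau^2 + j^2 - g_j^2)/\tau$, so strict decrease of $g_j$ is \emph{equivalent} to the nontrivial bound $-\tau K_j'(\tau)/K_j(\tau) > \sqrt{\tau^2 + j^2}$; this is precisely the point where the paper leans on Segura's theorems \cite{S11} about ratios of modified Bessel functions, and it cannot be waved through with soft convexity facts. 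Note, however, that full monotonicity is not needed for most of the count: positivity of $K_{j-1}$ and $K_j$ already gives $g_j(\tau) = -\tau K_{j-1}(\tau)/K_j(\tau) - j < -j$ for every $\tau > 0$, so your limits plus the intermediate value theorem show the range of $g_j$ is exactly $(-\infty, -j)$, hence a negative eigenvalue exists in sector $j$ if and only if $\alpha < -j$. Monotonicity (or, as the paper does, citing the fiber decomposition of \cite{KL23}, under which each angular sector carries at most one negative eigenvalue) is then needed only to rule out multiple roots in a single sector, i.e., to guarantee the total is exactly, and not merely at least, $2m+1$. Your alternative variational suggestion would serve this purpose, but as written it is only a sketch; either make that argument precise or cite the Bessel-ratio monotonicity results as the paper does.
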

\begin{proof}
Our proof will be a modified version of \cite[Proposition 2.4]{KL23}. Denote the exterior Laplacian with parameter $\alpha$ as $-\lapl_\alpha^\text{ext}$. By \cite[Section 2.2]{KL23}, $-\lapl_\alpha^\text{ext}$ is diagonalized into an orthogonal sum of fiber operators $\mathsf{H}_{\alpha, j}$ acting on $L^2((1, \infty), r \,dr )$ for each $j \geq 0$. Each $\mathsf{H}_{\alpha, j}$ is of the form
\begin{gather*}
\mathsf{H}_{\alpha, j} f:= -f''(r) - \frac{f'(r)}{r} + \frac{j^2f(r)}{r^2}, \\
\text{dom } \mathsf{H}_{\alpha, j} = \left\{f : f, f'' + \frac{f'}{r} \in L^2((1,\infty), r \, dr), f'(1) = \alpha f(1)\right\}.
\end{gather*}
Let $\lambda_1(\mathsf{H}_{\alpha, j})$ be the infimum of the spectrum of $\mathsf{H}_{\alpha, j}$. By \cite{KL23}, there exists a constant $j_*(\alpha)$ such that $\lambda_1(\mathsf{H}_{\alpha, j})$ is negative and a discrete eigenvalue of $-\lapl_\alpha^\text{ext}$ if and only if $0 \leq j \leq j_*(\alpha)$. Furthermore, by \cite{KL23}, each $\mathsf{H}_{\alpha, j}$ has at most one negative eigenvalue, $\lambda_1(\mathsf{H}_{\alpha, 0})$ is a simple eigenvalue of $-\lapl_\alpha^\text{ext},$ and $\lambda_1(\mathsf{H}_{\alpha, j})$ is a double eigenvalue for $1 \leq j \leq j*(\alpha)$. The exterior problem thus has exactly $2j_*(\alpha) + 1$ negative eigenvalues and so our goal is to show $j_*(\alpha) = m$.

If $\alpha < 0$, the two eigenvalues $\lambda_1^\text{ext}(\D, \alpha)$ and $\lambda_1(\Theta, \alpha/ \sin \Theta)$ are both simple and negative. Our goal is then to detect when the exterior problem begins to accrue further negative eigenvalues. Let $j \geq 1$. Modifying \cite[Equation 2.11]{KL23}, the operator $\mathsf{H}_{\alpha, j}$ has a negative eigenvalue if and only if the system
\begin{equation*}
    \left\{
    \begin{aligned}
        & -f''(r) - \frac{f'(r)}{r} + \frac{j^2 f(r)}{r^2} = \sigma f(r)  \quad \text{in } (1, \infty), \\
        & f'(1) = \alpha f(1),
    \end{aligned}
    \right.
\end{equation*}
has a non-trivial eigenfunction in $L^2((1, \infty), r \, dr)$ with eigenvalue $\sigma < 0$. Using \cite[§10.25]{DLMF} and the fact an eigenfunction of the above system belongs to $L^2((1,\infty), r\,dr)$, we obtain the solution
\[
f(r) = K_j(\mu r)
\]
where $K_j$ is a modified Bessel function of the second kind and $\mu := \sqrt{-\sigma}$. Thus, $\sigma$ is a discrete negative eigenvalue of $\mathsf{H}_{\alpha, j}$ if and only if $\mu$ satisfies the Robin boundary condition at $r = 1:$
\[
\mu K_j'(\mu) = \alpha K_j(\mu).
\] 
By using the recurrence $K_j'(z) = -K_{j-1}(z) - \frac{j}{z}K_j(z)$ \cite[10.29]{DLMF}, we may rewrite the equation as
\begin{equation}\label{omegaBoundEq}
\frac{-\mu K_{j-1}(\mu)}{K_j(\mu)} = \alpha + j.
\end{equation}
Define
\[
h_j(x) := \frac{K_{j-1}(x)}{K_j(x)}
\]
so that \eqref{omegaBoundEq} reads $-xh_j(x) = \alpha + j$. By \cite[Theorem 1]{S11} and \cite[§10.37]{DLMF} we have the bounds
\begin{equation*}\label{besselBounds}
\frac{x}{j - 1/2 + \sqrt{(j- 1/2)^2 + x^2}} < h_j(x) \leq 1
\end{equation*}
    for $x > 0, j \geq 1$. It follows that $x h_j(x)$ tends to $0$ as $x \to 0$ and tends to $\infty$ as $x \to \infty$. We want to show that $xh_j(x)$ is strictly increasing so that the solution of \eqref{omegaBoundEq} is unique. From \cite[Lemma 1]{S11}, we see $h_j(x)$ is strictly increasing in a neighborhood of the origin. The proof of \cite[Theorem 2]{S11} shows $h_j'(x) > 0$ for all $x > 0$. It follows that $xh_j(x)$ is strictly increasing.

We conclude \eqref{omegaBoundEq} has a unique positive solution $\mu$ if and only if $\alpha + j < 0$. Since $\alpha \in [-m-1, -m),$ the condition $\alpha + j < 0$ means $j \leq m$ and so $j_*(\alpha) = m$. Hence, the dimension of the negative eigenvalues on the spherical cap and on the exterior of the unit disk are equal in the case $\alpha \in [-m-1, -m)$. 
\end{proof}

We have managed to show step (\ref{stepone}), justifying that the spherical eigenvalues \textit{can} converge to the exterior ones. Now, we must show that they \textit{do} converge. We start with some upper and lower bounds.

\subsection{Step (\ref{steptwo}): upper and lower bounds.}\label{steptwoproof}
Henceforth, we assume $k \leq 2m+1$ so that $\lambda_k(\Theta, \alpha)$ and $\lambda_k^\text{ext}(\D, \alpha)$ are both negative. Perform the conformal map $z \mapsto 1/z$ on \eqref{finalRobinRQ} to yield the characterization \eqref{limitingRQ} of $\lambda_k(\Theta, \alpha/\sin \Theta)\sin^2(\Theta)$. Observe in \eqref{limitingRQ} that the weight in the denominator is bounded by 1, and we may assume the numerator is negative. Furthermore, we may restrict $\mathcal{L}$ to vary over $k$-dimensional subspaces of $H^1(\R^2 \setminus \D)$ rather than $H_\text{ext}(\R^2 \setminus \D)$ (defined in \eqref{hextspace}), in light of the inclusion $H^1(\R^2 \setminus \D) \subset H_\text{ext}(\R^2 \setminus \D)$. Hence, we obtain  the upper bound
\begin{equation}\label{interupperBound}
\lim_{\Theta \to \pi} \lambda_k(\Theta, \alpha/\sin \Theta)\sin^2(\Theta) \leq \lambda_k^\text{ext}(\D, \alpha) < 0.
\end{equation}

We cannot directly bound below by the exterior eigenvalues nor take the limit of \eqref{limitingRQ} as $R \to \infty$. Instead, we will chop off the unbounded portion of the domain and consider a large annulus with a Robin boundary on the inner radius and Neumann boundary on the outer annulus. We are able to bound the scaled spherical eigenvalue below by these \textit{annular} eigenvalues.  

Define $A(M)$ as the annulus of inner radius $1$ (the Robin portion) and outer radius $M$ (the Neumann portion), and define $w$ as the weight $\left(\frac{1+R^2}{r^2+R^2}\right)^{ \!\! 2}$.
By reworking Lemma \ref{negrobmono} slightly for unbounded domains and different function spaces, we see that
\begin{align*}
\lambda_k(\Theta, \alpha/\sin\Theta)\sin^2(\Theta) & = \min_\mathcal{L}\max_{0 \neq f \in \mathcal{L}} \frac{\displaystyle \int_{\R^2 \setminus \D} |\grad f|^2\, dA + \alpha \int_{\partial \D} |f|^2\, dS}{\displaystyle \int_{\R^2 \setminus \D} |f|^2 \left( \frac{1+R^2}{r^2+R^2} \right)^{\!\! 2}\, dA} \\
& \geq
\min_\mathcal{L}\max_{0 \neq f \in \mathcal{L}} \frac{\displaystyle \int_{A(M)} |\grad f|^2\, dA + \alpha \int_{\partial \D} |f|^2\, dS}{\displaystyle \int_{A(M)} |f|^2 \left( \frac{1+R^2}{r^2+R^2} \right)^{\!\! 2}\, dA}
\end{align*}
where $\mathcal{L}$ varies over $k$-dimensional subspaces of $H_\text{ext}(\R^2 \setminus \D)$. The key point here is that if $f$ is in $H_\text{ext}(\R^2 \setminus \D)$, the restriction of the function to $A(M)$ must be in $L^2(A(M))$ since $|z|^4$ is a bounded function on the annulus. That is, when we restrict the function space to exist on $A(M)$, we can revert to the standard $H^1$ space.
We thus generate the lower bound
\[
\lambda_k(\Theta, \alpha/ \sin \Theta) \sin^2(\Theta) \geq \eta_k(A(M), \partial \D, \alpha, w).
\]
Since the weight $w$ converges uniformly to $1$ on the annulus, taking $R \to \infty$ yields the bound
\begin{equation}\label{interlowerBound}
\lim_{\Theta \to \pi} \lambda_k(\Theta, \alpha/\sin \Theta) \sin^2(\Theta) \geq \eta_k(A(M), \partial \D,  \alpha, 1).
\end{equation}
For the right-hand side, we will suppress notation and denote the unweighted Robin-Neumann eigenvalue on the annulus as $\eta_k(M, \alpha)$. Take care to note the inner radius at $R = 1$ has the Robin boundary condition for this eigenvalue problem. Again, using Lemma \ref{negrobmono}, this eigenvalue is an increasing function in $M$. By combining \eqref{interupperBound} and \eqref{interlowerBound}, we see the limit $\lim_{M \to \infty} \eta_k(M, \alpha)$ is a number $\eta_k^*(\alpha) < 0$. We will show $\eta_k^*(\alpha) = \lambda_k^\text{ext}(\D, \alpha)$.

Step (\ref{steptwo}) is now complete. To analyze further, we will look at the implicit equation that characterizes the exterior eigenvalues and the one that characterizes the annular eigenvalues. In order to do this, we will separate variables and reduce our analysis to the radial modes. The radial modes satisfy certain well-behaved implicit equations characterizing the eigenvalues completely, and we will show uniform convergence of these equations.

\subsection{Step (\ref{stepthree}): reduction to the radial modes}\label{stepthreeproof}

First, we state properties of the exterior eigenvalues. Observe the domain $\R^2 \setminus \D$ is radially symmetric. Then, referring to Subsection \ref{steponeproof}, we separate variables into polar coordinates and find the angular components of the modes are $\cos(j\phi), \sin(j\phi)$ and the radial components are found by solving the modified Bessel ODE system

\begin{equation}\label{separatedExteriorODE}
    \left\{
    \begin{aligned}
        & -f''(r) - \frac{f'(r)}{r} + \frac{j^2 f(r)}{r^2} = \sigma f(r)  \quad \text{in } (1, \infty), \\
        & f'(1) = \alpha f(1). \\
    \end{aligned}
    \right.
\end{equation}
For $\alpha \in [-m-1, -m)$, the system \eqref{separatedExteriorODE} has a unique eigenvalue $\sigma_j(\alpha) < 0$ for each $0 \leq j \leq m$ from the proof of Lemma \ref{equalNegDim}, and by \cite[Section 2.2]{KL23}, we have that $\sigma_k(\alpha) < \sigma_j(\alpha) < 0$ if $0 \leq k < j \leq m$.  The eigenvalue $\sigma_j(\alpha)$ has associated eigenfunction
\[
K_j(\mu_j(\alpha) r)
\]
where $\mu_j(\alpha) := \sqrt{-\sigma_j(\alpha)}$. So, separation of variables is quite quickly seen to be justified in the exterior case.

For annular eigenvalues, there is more work to be done, since less literature exists on this subject. Freitas and Krejčiřík provide a treatment of the first Robin eigenvalue with negative parameter on disks and annuli in \cite{FK15}. Here, we will be considering all negative eigenvalues, including the first. 

Rotational symmetry of the annulus again allows us to separate variables into polar coordinates, for the mixed Robin-Neumann problem. The angular components are of the form $\cos(j \phi)$ and $\sin (j \phi)$ while the radial components are found by solving the modified Bessel ODE system

\begin{equation}\label{separatedAnnularODE}
    \left\{
    \begin{aligned}
        & -f''(r) - \frac{f'(r)}{r} + \frac{j^2 f(r)}{r^2} = \rho f(r)  \quad \text{in } (1, M), \\
        & f'(1) = \alpha f(1), \\
        & f'(M) = 0.
    \end{aligned}
    \right.
\end{equation}
We need to prove for each $j = 1, \dots, m$ that the system possesses a unique eigenvalue $\rho_j(M, \alpha) < 0$ satisfying the eigenfunction equation and boundary conditions. Furthermore, we also want to prove that increasing $j$ increases the eigenvalue $\rho_j(M,\alpha)$. After proving these properties, we can justify separation of variables in the annular case.

\begin{lemma}[Negative Robin eigenvalues on annulus]\label{annularRobinEigs}
    Let $\alpha \in [-m-1, -m)$ where $m$ is a non-negative integer. If $0 \leq j \leq m$, then there exists a unique eigenvalue $\rho_j(M,\alpha) < 0$ of system \eqref{separatedAnnularODE}. Furthermore, if $0 \leq k < j \leq m$, then $\rho_k(M,\alpha) < \rho_j(M, \alpha) < 0$.
\end{lemma}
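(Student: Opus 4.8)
The plan is to treat \eqref{separatedAnnularODE} as a regular Sturm--Liouville problem on the bounded interval $(1,M)$ with weight $r$, and to split the assertion into two independent tasks: \emph{counting} the negative eigenvalues of each radial mode (which yields existence and uniqueness) and \emph{ordering} the modes in $j$. The counting is where explicit structure helps, and I would decouple it from the weight by the substitution $t = \log r$.

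Under $t = \log r$, with $t$ ranging over $(0,L)$ and $L := \log M$, the first-order term in \eqref{separatedAnnularODE} disappears and the numerator of the mode-$j$ Rayleigh quotient becomes
\[
B_j[f] = \int_0^L \bigl( (f')^2 + j^2 f^2 \bigr)\, dt + \alpha f(0)^2,
\]
while the weight $r\,dr$ turns into $e^{2t}\,dt$ in the denominator. Since the number of negative eigenvalues of a generalized eigenvalue problem equals the Morse index of its numerator form, and this index is independent of the (positive) reference weight, I may instead count the negative eigenvalues of the \emph{constant-coefficient} problem $-f'' + j^2 f = \nu f$ on $(0,L)$ with $f'(0) = \alpha f(0)$ and $f'(L)=0$. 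For $\nu < 0$ the solutions obeying the Neumann condition at $t = L$ are multiples of $\cosh(\beta(t-L))$ with $\beta := \sqrt{j^2 - \nu} > j$, and the Robin condition at $t = 0$ reduces to $\beta\tanh(\beta L) = |\alpha|$. As $\beta \mapsto \beta\tanh(\beta L)$ is strictly increasing from $0$ to $\infty$, this equation has a unique root $\beta^{*}>0$, so each mode has \emph{at most one} negative eigenvalue, and it has one precisely when $\beta^{*} > j$, i.e.\ when $j\tanh(jL) < |\alpha|$. For $\alpha \in [-m-1,-m)$ one has $|\alpha| > m$, so for $0 \le j \le m$ the estimate $j\tanh(jL) \le j \le m < |\alpha|$ gives exactly one negative eigenvalue $\rho_j(M,\alpha) < 0$, as claimed.

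For the ordering I would return to the original weighted quotient with numerator $Q_j[f] = \int_1^M \bigl((f')^2 + j^2 r^{-2} f^2\bigr)\, r\, dr + \alpha f(1)^2$. The centrifugal term is strictly increasing in $j$, so testing the mode-$k$ quotient against the ground state $f_j$ of mode $j$ yields, for $k < j$,
\[
\rho_k(M,\alpha) \le \frac{Q_k[f_j]}{\int_1^M f_j^2\, r\,dr} = \rho_j(M,\alpha) - (j^2 - k^2)\,\frac{\int_1^M f_j^2\, r^{-1}\,dr}{\int_1^M f_j^2\, r\,dr} < \rho_j(M,\alpha).
\]
Because each mode carries exactly one negative eigenvalue (its ground state), this is precisely the strict ordering $\rho_k(M,\alpha) < \rho_j(M,\alpha) < 0$.

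The one genuinely delicate point in this route is the decoupling used for counting: I must justify that the negative-eigenvalue count is unchanged when the weight $e^{2t}$ is replaced by $1$, i.e.\ invoke weight-independence of the Morse index for the regular form $B_j$. This is exactly what converts the modified-Bessel eigenvalue condition into the elementary relation $\beta\tanh(\beta L) = |\alpha|$, and it is where I expect the naive approach to stall. Solving \eqref{separatedAnnularODE} directly produces solutions $c_1 I_j(\mu r) + c_2 K_j(\mu r)$ and a transcendental determinant condition mixing $I_j$ and $K_j$; proving that this has a single root would demand a monotonicity analysis of a combined Bessel-ratio expression, considerably harder than the pure-$K_j$ argument of Lemma \ref{equalNegDim} that relied on Segura's bounds \cite{S11}. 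The logarithmic substitution sidesteps this obstacle entirely, leaving the ordering as a routine variational comparison.
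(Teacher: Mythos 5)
Your proposal is correct, but the existence--uniqueness half takes a genuinely different route from the paper. The paper proves existence by comparison with the exterior problem: restricting the Rayleigh quotient to trial functions with negative numerator and truncating the domain of integration (the reversed monotonicity of Lemma \ref{negrobmono}) gives $\rho_{j,1}(M,\alpha) \le \sigma_j(\alpha) < 0$, so it leans on the Bessel-function analysis of Lemma \ref{equalNegDim}; uniqueness is then a soft Sturm-type argument --- a second eigenfunction must change sign at some $x^* \in (1,M)$, and its restriction to $[x^*,M]$ solves a Dirichlet--Neumann problem with non-negative spectrum, forcing $\rho_{j,2} \ge 0$. You instead substitute $t = \log r$, invoke weight-independence of the negative-eigenvalue count (legitimate, and in fact the same min-max observation the paper itself uses in Lemmas \ref{weightedDivergesLemma} and \ref{negrobmono}: with a positive denominator, $\nu_k < 0$ iff some $k$-dimensional subspace makes the numerator form negative definite), and solve the resulting constant-coefficient problem explicitly via $\beta\tanh(\beta L) = |\alpha|$. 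Your route buys two things the paper's does not: the lemma becomes self-contained (no Bessel functions, no Segura bounds, no dependence on the exterior problem), and you obtain the sharp threshold $j\tanh(jL) < |\alpha|$ identifying exactly which modes carry a negative eigenvalue at finite $M$ --- which cleanly explains the paper's remark after the lemma that the annulus may have \emph{more} than $2m+1$ negative eigenvalues. What the paper's route buys is robustness: truncation and the sign-change argument do not require the ODE to be exactly solvable, whereas your counting step is tied to the constant-coefficient reduction. Your ordering argument is essentially the paper's (comparison of the fiber Rayleigh quotients \eqref{fiberRayleigh}), with the useful refinement that testing mode $k$ against the mode-$j$ ground state exhibits the strict gap explicitly. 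One small correction: your closing guess about the alternative --- that one would have to analyze a mixed $I_j$, $K_j$ determinant --- is not what the paper does either; that transcendental equation only appears in Step (iv), where the uniqueness you have just proved is precisely what guarantees its root is unique.
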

\begin{proof}
System \eqref{separatedAnnularODE} induces a discrete spectrum of eigenvalues
\[
\rho_{j,1}(M, \alpha) < \rho_{j,2}(M,\alpha) \leq \rho_{j,3}(M,\alpha) \leq \cdots \to \infty
\]
and we claim $\rho_{j,1}(M,\alpha) < 0$ while $\rho_{j,2}(M,\alpha) \geq 0$. 

The variational characterization for the lowest eigenvalue reads
\begin{equation}\label{fiberRayleigh}
\rho_{j,1}(M,\alpha) = \min_{f \neq 0} \frac{\displaystyle \int_1^M \left(f'(r)^2 + \frac{j^2}{r^2}f(r)^2\right) r\, dr + \alpha f(1)^2}{\displaystyle \int_1^M f(r)^2r\, dr }
\end{equation}
where $f$ varies over $H^1([1,M], r\,dr)$. Furthermore, also consider the variational characterization for the lowest eigenvalue $\sigma_j(\alpha)$ of System \eqref{separatedExteriorODE}:
\[
\sigma_j(\alpha) = \min_{f \neq 0} \frac{\displaystyle \int_1^\infty \left(f'(r)^2 + \frac{j^2}{r^2}f(r)^2\right) r\, dr + \alpha f(1)^2}{\displaystyle \int_1^\infty f(r)^2r\, dr }
\]
where $f$ varies over $H^1([1, \infty], r\, dr).$ 
As $\alpha \in [-m-1, -m)$ by assumption, the eigenvalue $\sigma_j(\alpha)$ is negative from the proof of Lemma \ref{equalNegDim}, so we may restrict its variational characterization to functions $f$ such that the numerator is negative. By reducing the domain of integration in the numerator and denominator (such as in Lemma \ref{negrobmono}), we see immediately that
\[
\rho_{j,1}(M, \alpha) \leq \sigma_j(\alpha) < 0.
\]

We show non-negativity for the rest of the spectrum. Let $f_1$ and $f_2$ be eigenfunctions for $\rho_{j,1}(M,\alpha)$ and $\rho_{j,2}(M,\alpha)$ respectively. Since $f_1$ cannot change sign on $[1,M]$ and $f_2$ is orthogonal to $f_1$ in $L^2([1, M], r\,dr)$, we see that $f_2$ must change sign at some point $x^* \in (1,M)$. Then, the restriction of $f_2$ to the interval $[x^*, M]$ is an eigenfunction of the system 
\begin{equation*}
    \left\{
    \begin{aligned}
        & -f''(r) - \frac{f'(r)}{r} + \frac{j^2 f(r)}{r^2} = \rho f(r)  \quad \text{in } (x^*, M), \\
        & f(x^*) = 0, \\
        & f'(M) = 0.
    \end{aligned}
    \right. 
\end{equation*} 
The system above represents an eigenvalue problem for a mixed Dirichlet-Neumann operator on the interval $[x^*, M]$. The variational characterization for this system implies the operator's eigenvalues must be non-negative, and it follows that $\rho_{j,2}(M,\alpha) \geq 0$. Hence, $\rho_{j,1}(M, \alpha)$ is the unique negative eigenvalue of \eqref{separatedAnnularODE}, and we denote it $\rho_j(M, \alpha)$.

By comparing the variational characterizations \eqref{fiberRayleigh}, we see $\rho_k(M, \alpha) < \rho_j(M,\alpha)$ if $k < j$ and the claim follows.
\end{proof}
Note that Lemma \ref{annularRobinEigs} is \textit{not} stating there are exactly $2m+1$ negative eigenvalues, rather that there are \textit{at least} $2m+1$ negative eigenvalues. 

For the exterior and annular radial modes, we see that increasing the parameter $j$ increases both eigenvalues $\sigma_j(\alpha)$ and $\rho_j(M, \alpha)$. Furthermore, for $j \geq 1$, each of the eigenvalues is repeated twice in the unseparated form of their respective PDE systems due to the existence of the two angular components $\cos(j\phi)$ and $\sin(j\phi)$. These facts, along with $\sigma_j(\alpha)$ and $ \rho_j(M,\alpha)$ being unique negative eigenvalues for the separated Bessel ODEs, imply that
\begin{align*}
\sigma_j(\alpha) &= \lambda_{2j}^\text{ext}(\D, \alpha) = \lambda_{2j+1}^\text{ext}(\D, \alpha) , \\  \rho_j(M, \alpha) &= \eta_{2j} (M, \alpha) = \eta_{2j+1}(M, \alpha),
\end{align*}
for $1 \leq j \leq m$, where we recall $\lambda_k^\text{ext}(\D, \alpha)$ is the $k^\text{th}$ exterior eigenvalue and $\eta_k(M,\alpha)$ is the $k^\text{th}$ Robin-Neumann eigenvalue on the annulus $A(M)$. For $j = 0$, the exterior and annular eigenvalues are simple and radial so that they coincide with their radial modes. Thus, proving $\lim_{M\to\infty} \eta_k(M, \alpha) = \lambda_k^\text{ext}(\D, \alpha)$, $0 \leq k \leq 2m+1$, is equivalent to proving the radial modes converge:
\[
\lim_{M \to \infty} \rho_j(M, \alpha) = \sigma_j(\alpha), \quad 0 \leq j \leq m.
\]

We have successfully completed step (\ref{stepthree}) of the proof. We will finish the final step of the proof by showing the implicit equations corresponding to the radial eigenvalues converge uniformly.

\subsection{Step (\ref{stepfour}): convergence of the radial modes}\label{stepfourproof} By Subsection \ref{steponeproof}, $\mu_j(\alpha) = \sqrt{-\sigma_j(\alpha)}$ is the unique root in $(0, \infty)$ of the implicit function
\[
A(x, \alpha) := x K_j'(x) - \alpha K_j(x).
\]
Similarly to how Equation (13) is derived in \cite{FK15}, we find $\omega_j(M, \alpha) := \sqrt{-\rho_j(M, \alpha)}$ is the unique root in $(0, \infty)$ of the implicit function
\[
G_M(x, \alpha) := A(x, \alpha)D_M(x, \alpha) - B(x, \alpha)C_M(x, \alpha)
\]
with coefficients given in terms of the modified Bessel functions $I_j$ and $K_j$ as
\begin{align*}
    A(x, \alpha) &:= x K_j'(x) - \alpha K_j(x), \\
    B(x, \alpha) &:= x I_j'(x) - \alpha I_j(x), \\
    C_M(x, \alpha) & := x K_j'(Mx), \\
    D_M(x, \alpha) & := x I_j'(Mx).
\end{align*}
(Despite the lack of dependence of $C_M$ and $D_M$ on $\alpha$, we keep the notation for consistency.)

Since $D_M$ is non-vanishing on $(0, \infty)$, we may divide $G_M$ by $D_M$ and preserve the location of the unique root of $G_M$. Furthermore, recall that $\eta_k(M,\alpha)$ is increasing as a function of $M$ and bounded above by $\lambda_k^\text{ext}(\D, \alpha)$ (see the paragraph after Equation \eqref{interupperBound}). It follows that $\omega_j(M, \alpha)$ is decreasing as a function of $M$ and bounded below by $\mu_j(\alpha) > 0$, and so we may restrict our analysis to the compact interval 
\[
S = [\mu_j(\alpha)/2, \omega_j(M_0, \alpha)]
\]where $M_0$ is large and finite. We will show that
$G_M(x,\alpha)/D_M(x, \alpha)$ converges uniformly to $A(x,\alpha)$ on the interval $S$ as $M \to \infty$, or, equivalently:
\[
-\frac{B(x, \alpha)C_M(x, \alpha)}{D_M(x, \alpha)} \xrightarrow{\text{unif.}} 0 \quad \text{on } S \text{ as }M \to \infty.
\]
Since the convergence is uniform, the roots $\omega_j(M, \alpha)$ must converge to a root of $A(x,\alpha)$, and as $A(x, \alpha)$ has $\mu_j(\alpha) \in S$ as its unique root, we will be able to conclude the proof.

By \cite[§10.29]{DLMF}, we have the recurrence relations

\begin{alignat*}{3}
& I_j' = \frac{1}{2}\left(I_{j-1} + I_{j+1}\right), &&\quad -K_j' = \frac{1}{2}\left(K_{j-1} + K_{j+1}\right), &&\quad (j \geq 1)\\
& I_0' = I_1, &&\quad -K_0' = K_1, &&\quad (j = 0).
\end{alignat*}
From these relations we see that $I_j'$ increases strictly from $0$ to $\infty$ (or $1/2$ to $\infty$ if $j = 1$) and $-K_j'$ decreases strictly from $
\infty$ to $0$. Thus, it follows the ratio
\[
\frac{-C_M(x,\alpha)}{D_M(x,\alpha)} = \frac{-K_j'(Mx)}{I_j'(Mx)}
\]
is positive and decreases to 0 as $M$ increases. Furthermore, the ratio achieves its maximum value at the left endpoint of $S$ for each fixed $M$. Since $B$ is positive and does not depend on $M$, we deduce that
\[
\sup_{x \in S} \frac{-B(x,\alpha)C_M(x,\alpha)}{D_M(x,\alpha)} \leq B(\mu_j(\alpha)/2, \alpha) \frac{-K_j'(M \mu_j(\alpha)/2)}{I_j'(M \mu_j(\alpha)/2)} \to 0 \quad \text{as } M \to \infty.
\]
It follows that the convergence is uniform, and we conclude that
\[
\lim_{\Theta \to \pi} \lambda_k(\Theta, \alpha/\sin \Theta)\sin^2(\Theta) = \lim_{M \to \infty} \eta_k(M, \alpha) = \lambda_k^\text{ext}(\D, \alpha).
\]

\section{Higher-dimensional Dirichlet monotonicity --- Theorem 1.3}\label{DirichProof}
For this section, we jump up to dimensions three and higher, and the reader needs to keep in mind we now analyze higher-dimensional Dirichlet eigenvalues, not two-dimensional Robin eigenvalues. One may find it helpful to refer to Appendix \ref{coords} for a refresher on the Laplace--Beltrami operator in our local coordinate system. The techniques here are quite different from the proofs of Theorems \ref{alphapos} and \ref{alphaneg}, as we use direct trial functions instead of conformal maps. 

There are three main steps in the proof of Theorem \ref{main}:
\begin{itemize}
    \item Transform an eigenspace on $C(\Theta_1)$ to a trial subspace on $C(\Theta_2)$.

    \item Rescale the Rayleigh quotient on $C(\Theta_2)$ back to $C(\Theta_1)$, incurring the cost of certain coefficient functions.

    \item Bound the coefficient functions and recover the standard Rayleigh quotient for $\lambda_k(\Theta_1)$.
\end{itemize}
The main point of interest in the proof is in the first step, where the transformation is performed to preserve the area elements of $C(\Theta_1)$ and $C(\Theta_2)$. We will first consider the spherical case corresponding to $\Theta > 0$, as the proof is slightly more demanding there.

\subsection{Spherical case}
We first note that $\lambda_k(\Theta)$ is a positive decreasing function of $\Theta$ by Dirichlet domain monotonicity, and as $\sin^2$ decreases strictly on $[\pi/2, \pi]$ and is positive, we need only show that $\lambda_k(\Theta)\sin^2(\Theta)$ is decreasing strictly on the  interval $(0, \pi/2)$. 

For any $\Theta$ we have the variational characterization.
\[
\lambda_k(\Theta) = \min_\mathcal{L} \max_{0 \neq f \in 
\mathcal{L}} \frac{\displaystyle \int_{C(\Theta)} |\grad_{\S^n} f|^2 \, dV }{\displaystyle \int_{C(\Theta)}  |f|^2 \, dV } =: \min_\mathcal{L} \max_{0 \neq f \in 
\mathcal{L}}  \mathcal{R}_\Theta(f)
\]
where $\mathcal{L}$ varies over $k$-dimensional subspaces of $H_0^1(C(\Theta))$. In coordinates, the Rayleigh quotient takes the form
\[
\mathcal{R}_\Theta(f) =\frac{ \displaystyle\int_{\S^{n-1}}\int_0^\Theta \left(|\partial_1 f|^2 + \frac{1}{\sin^2\theta}|\grad_{\S^{n-1}} f|^2\right)\sin^{n-1}\theta \, d\theta d\xi}{ \displaystyle \int_{\S^{n-1}}\int_0^\Theta |f|^2\sin^{n-1}\theta \, d\theta d\xi},
\]
where we write $\partial_1 f$ to denote the derivative in the first component, the aperture coordinate. Since we frequently pass between different spherical caps, this notation is employed for clarity. 

Let $0 < \Theta_1 < \Theta_2 \leq \pi/2.$ Our goal is to show $\lambda_k(\Theta_1)\sin^2(\Theta_1) > \lambda_k(\Theta_2)\sin^2(\Theta_2)$. Let $\{\phi_1, \dots, \phi_k\}$ be the first $k$ Dirichlet eigenfunctions on $C(\Theta_1)$. Call the subspace generated by these functions $\mathcal{K}$. We want to extend these functions suitably to $C(\Theta_2)$ to be trial functions in the Rayleigh quotient for $\lambda_k(\Theta_2)$. It is unclear what such an extension $\tilde{\phi}_j$ should be. A simple linear extension in the $\theta$ variable would not interact well with the Rayleigh quotient. The volume element would transform into $\sin^{n-1}(\Theta_1\theta/\Theta_2)$ in both the numerator and denominator and not cancel out with any factors, crushing our hopes of proving monotonicity. However, if we normalize the trial function by the volume element, we can defeat this obstacle. We are thus motivated to define the following transformation: let $f \in H_0^1(C(\Theta_1))$ and define
\[
\tilde{f} (\theta, \xi) := f(\Theta_1 \theta/\Theta_2, \xi) \frac{\sqrt{\sin^{n-1}(\Theta_1 \theta/\Theta_2)}}{\sqrt{\sin^{n-1}(\theta)}} \in H_0^1(C(\Theta_2)).
\]
(At the origin, the ratio of sines is understood as its limit $\lim_{\theta \to 0}\sin(\Theta_1 \theta/\Theta_2)/\sin(\theta) = \Theta_1/\Theta_2$.) In essence, this transformation is the spherical equivalent of linear dilation in the plane. Linear dilation in the plane stretches a function uniformly to fit into a larger domain while only incurring a constant multiplicative cost to the $L^2$ norm. Here one sees the same phenomenon and visualizes it suitably adjusted to a spherical cap scenario. One stretches the function along the spherical cap as one might stretch rubber, while scaling the $L^2$ norm by a constant, with 
\[
\int_{C(\Theta_2)} |\tilde{f}|^2 \, dV = \frac{\Theta_2}{\Theta_1}\int_{C(\Theta_1)} |f|^2 \, dV.
\]
We will state the slightly stronger lemma:
\begin{lemma}\label{tildetransf}
Let $0 < \Theta_1 < \Theta_2 \leq \pi/2$ and $f \in H_0^1(C(\Theta_1))$. Then, as defined above, $\tilde{f} \in H_0^1(C(\Theta_2))$. Further, if $\mathcal{K} = \text{span}\{f_1, \dots, f_k\}$ is linearly independent in $H_0^1(C(\Theta_1))$, then $\tilde{\mathcal{K}} := \text{span}\{\tilde{f_1}, \dots, \tilde{f_k}\}$ is linearly independent in $H_0^1(C(\Theta_2))$, and the map $f \mapsto \tilde{f}$ is linear.
\end{lemma}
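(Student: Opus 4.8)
The plan is to show that the assignment $T:f\mapsto\tilde f$ is a \emph{bounded, injective, linear} operator from $H_0^1(C(\Theta_1))$ into $H_0^1(C(\Theta_2))$; the three assertions of the lemma then follow in turn. Linearity is immediate, since $\tilde f$ is obtained from $f$ by precomposition with the fixed coordinate map $(\theta,\xi)\mapsto(a\theta,\xi)$, where $a:=\Theta_1/\Theta_2\in(0,1)$, followed by multiplication by the fixed factor $m(\theta):=\bigl(\sin(a\theta)/\sin\theta\bigr)^{(n-1)/2}$; both operations are linear in $f$. Injectivity is equally quick: on $(0,\Theta_2)$ the multiplier $m$ is strictly positive (as $a\theta\in(0,\Theta_1)\subset(0,\pi)$) and $\theta\mapsto a\theta$ is a bijection onto $(0,\Theta_1)$, so $\tilde f=0$ a.e.\ forces $f=0$ a.e. Granting linearity, injectivity upgrades to the statement about linear independence: if $\sum_j c_j\tilde f_j=0$ then $\widetilde{\textstyle\sum_j c_j f_j}=0$, whence $\sum_j c_j f_j=0$ and all $c_j$ vanish.

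The substance of the proof is the $H^1$ bound, which I would establish piece by piece in the coordinate form of the Rayleigh quotient, exploiting the single key identity $m(\theta)^2\sin^{n-1}\theta=\sin^{n-1}(a\theta)$. For the $L^2$ term, this identity reduces $\int_{C(\Theta_2)}|\tilde f|^2\,dV$ to $\int|f(a\theta,\xi)|^2\sin^{n-1}(a\theta)\,d\theta\,d\xi$, and the substitution $s=a\theta$ produces exactly $\tfrac{\Theta_2}{\Theta_1}\int_{C(\Theta_1)}|f|^2\,dV$ (the normalization recorded just before the lemma). For the angular part, $m$ depends only on $\theta$, so it commutes with $\grad_{\S^{n-1}}$; after multiplying by $\sin^{n-1}\theta$ the weight becomes $\sin^{n-1}(a\theta)/\sin^2\theta$, which I would bound by $\sin^{n-1}(a\theta)/\sin^2(a\theta)$ using $\sin(a\theta)\le\sin\theta$. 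This last inequality is exactly where the hypotheses $\Theta_1<\Theta_2\le\pi/2$ enter, since they guarantee $0<a\theta<\theta\le\pi/2$ and hence monotonicity of $\sin$; the same change of variables then bounds the angular energy of $\tilde f$ by $\tfrac{\Theta_2}{\Theta_1}$ times that of $f$.

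The delicate piece, and the step I expect to be the main obstacle, is the radial derivative. The product rule gives $\partial_1\tilde f=a\,(\partial_1 f)(a\theta,\xi)\,m(\theta)+f(a\theta,\xi)\,m'(\theta)$, so I must control $m'$. The first term is handled as above once $m$ is known to be bounded. The second term is dangerous near the pole $\theta=0$, where $m$ and $1/\sin\theta$ degenerate simultaneously; the resolution is the logarithmic-derivative identity
\[
\frac{m'(\theta)}{m(\theta)}=\frac{n-1}{2}\bigl(a\cot(a\theta)-\cot\theta\bigr),
\]
in which the two $1/\theta$ singularities cancel, leaving $m'(\theta)=O(\theta)$ as $\theta\to0$. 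Thus $m$, $m'$, and (since $m$ is continuous and positive on the compact interval $[0,\Theta_2]$) also $m^{-1}$ are all bounded; writing $m'(\theta)^2\sin^{n-1}\theta=(m'/m)^2\sin^{n-1}(a\theta)$ with $m'/m$ bounded and changing variables bounds the $m'$ term by a constant times $\int_{C(\Theta_1)}|f|^2\,dV$. Collecting the three estimates shows $T$ is bounded on the dense class $C_c^\infty$ of the open cap, hence extends to all of $H_0^1(C(\Theta_1))$. Finally, for $H_0^1$ membership I would transfer supports: if $\operatorname{supp}f\subset\{\theta\le\Theta_1-\varepsilon\}$ then $\operatorname{supp}\tilde f\subset\{\theta\le\Theta_2-(\Theta_2/\Theta_1)\varepsilon\}$ stays off the outer boundary, so $\tilde f\in H_0^1(C(\Theta_2))$ for smooth $f$; since $H_0^1(C(\Theta_2))$ is closed and $T$ is bounded, the inclusion $\tilde f\in H_0^1(C(\Theta_2))$ passes to the density limit, and an a.e.-convergent subsequence confirms that the limit agrees with the pointwise formula defining $\tilde f$.
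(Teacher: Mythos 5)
Your proposal is correct, and it proves the lemma by a more self-contained route than the paper does. The paper's own proof is essentially a forward reference: it invokes the calculations later in Section 6 that lead to Equation \eqref{finalRayl} --- the exact expansion of $|\partial_1\tilde g|^2$, an integration by parts to tame the cross term $g\,\partial_1 g$, and the sharp coefficient bounds $P\le\sin^2\Theta_1$, $Q\le\sin^2(\Theta_1)/\sin^2 t$, $R\le 0$ --- to conclude $\norm{\grad_{\S^n}\tilde f}^2_{L^2(C(\Theta_2))}\le\frac{\sin^2(\Theta_1)}{\sin^2(\Theta_2)}\norm{\tilde f}^2_{L^2(C(\Theta_2))}\mathcal{R}_{\Theta_1}(f)$, while declaring linearity and independence trivial. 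You instead use the same key identity $m(\theta)^2\sin^{n-1}\theta=\sin^{n-1}(a\theta)$ and change of variables, but replace the sharp coefficient analysis by crude $L^\infty$ bounds on $m$, $m^{-1}$ and $m'$, where the essential observation --- cancellation of the $1/\theta$ singularities in $m'/m=\frac{n-1}{2}\bigl(a\cot(a\theta)-\cot\theta\bigr)$ --- is precisely the mechanism that makes the paper's coefficient $R$ finite as well. Your bounds are too lossy to yield the strict monotonicity of Theorem 1.3 (that is what the paper's integration by parts and the sign of $R$ are for), but they suffice for the lemma and make it logically independent of the later computation. You also supply two details the paper glosses over: linear independence via injectivity of $f\mapsto\tilde f$, and membership in $H_0^1(C(\Theta_2))$ (rather than merely $H^1$) via support propagation plus density, which is a genuine gap-filling improvement since the vanishing trace at $\theta=\Theta_2$ is what licenses using $\tilde{\mathcal{K}}$ as a trial space for the Dirichlet problem. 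One cosmetic omission: when you square $\partial_1\tilde f=a(\partial_1 f)(a\theta,\xi)\,m+f(a\theta,\xi)\,m'$ you should say the cross term is absorbed by $(x+y)^2\le 2x^2+2y^2$ or Cauchy--Schwarz; with both squared terms already controlled this is immediate.
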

\begin{proof}
The calculations done below resulting in Equation (\ref{finalRayl}) show that $\norm{\grad_{\S^n} \tilde{f}}_{L^2(C(\Theta_2))}^2$ is bounded above by $\frac{\sin^2(\Theta_1)}{\sin^2(\Theta_2)}\norm{\tilde{f}}_{L^2(C(\Theta_2))}^2\mathcal{R}_{\Theta_1}(f)$. One can also clearly see $\norm{\tilde{f}}_{L^2(C(\Theta_2))}^2$ is finite. The linear independence and linearity are trivial and stated for completeness.
\end{proof}
Consider the trial functions on $C(\Theta_2)$ defined to be $\tilde{\phi}_j$ for $1 \leq j \leq k$, where the $\phi_j$ are Dirichlet eigenfunctions on $C(\Theta_1)$. The $\tilde{\phi}_j$ are linearly independent, so let their span be our trial subspace $\tilde{\mathcal{K}}$ in the variational characterization for $\lambda_k(\Theta_2)$. The maximizing function is thus achieved for some linear combination $\tilde{g} = \sum a_j \tilde{\phi}_j$ so that
\[
\lambda_k(\Theta_2) \leq \frac{ \displaystyle\int_{\S^{n-1}}\int_0^{\Theta_2} \left(|\partial_1 \tilde{g}|^2 + \frac{1}{\sin^2\theta}|\grad_{\S^{n-1}} \tilde{g}|^2\right)\sin^{n-1}(\theta) \, d\theta d\xi}{ \displaystyle \int_{\S^{n-1}}\int_0^{\Theta_2} |\tilde{g}|^2\sin^{n-1}(\theta) \, d\theta d\xi}.
\]
Define $g := \sum a_j \phi_j \in H_0^1(C(\Theta_1))$ and note that 
\[
|\grad_{\S^{n-1}} \tilde{g}|^2 = \frac{\sin^{n-1}(\Theta_1 \theta/ \Theta_2)}{\sin^{n-1}\theta} |\grad_{\S^{n-1}}g|^2
\]
as the transformation only affects $\theta$ and not $\xi$. After the linear change of variables $\theta = \Theta_2 t /\Theta_1$, we find an upper bound for $\lambda_k(\Theta_2)$ of

\[
\frac{ \displaystyle\int_{\S^{n-1}}\int_0^{\Theta_1} \left(|(\partial_1 \tilde{g})(\Theta_2 t/\Theta_1, \xi)|^2 \frac{\sin^{n-1}(\Theta_2 t/\Theta_1)}{\sin^{n-1}(t)}+ \frac{1}{\sin^2(\Theta_2 t/\Theta_1)}|\grad_{\S^{n-1}} g(t, \xi)|^2\right)\sin^{n-1}(t) \, dt d\xi}{ \displaystyle \int_{\S^{n-1}}\int_0^{\Theta_1} |g(t, \xi)|^2\sin^{n-1}(t) \, dt d\xi}.
\]

Our goal is to compare this last displayed formula with a Rayleigh quotient over $C(\Theta_1)$ involving the function $g$. A routine calculation shows in the numerator that
\begin{multline*}
|(\partial_1 \tilde{g})(\Theta_2 t/\Theta_1, \xi)|^2 \frac{\sin^{n-1}(\Theta_2 t/ \Theta_1)}{\sin^{n-1}(t)} = \frac{1}{4 \Theta_2^2} \Bigl( (  n-1)^2\bigl( \cot(t)\Theta_1 -  \cot ( \Theta _2 t/\Theta_1)\Theta_2 \bigr)^2g(t, \xi)^2 \\ + 4\Theta_1^2\partial_1 g(t, \xi)^2  + 4 (n-1)\Theta_1 \bigl( \cot(t) \Theta_1 - \cot (\Theta_2 t/ \Theta_1)\Theta_2 \bigr)g(t, \xi) \partial_1 g(t, \xi) \Bigr).
\end{multline*}
At this point, we would like to perform a direct comparison of Rayleigh quotients, but $g\partial_1 g$ need not be positive. So, we write $2g\partial_1 g = (g^2)_1$ and integrate by parts. The boundary terms vanish by the Dirichlet boundary conditions and the fact that $\sin^{n-2}(t) \cot (t) = 0$ when $t = 0$ and $n \geq 3$.

We thus obtain an upper bound for $\lambda_k(\Theta_2)\sin^2(\Theta_2)$ (observe here we multiply the Rayleigh quotient by the scaling factor $\sin^2(\Theta_2)$) of
\begin{equation}\label{finalRayl}
\lambda_k(\Theta_2)\sin^2(\Theta_2)\leq\frac{\displaystyle \int_{\S^{n-1}}\int_0^{\Theta_1} \left( P|\partial_1 g|^2 +  Q|\grad_{\S^{n-1}}g|^2 + R|g|^2 \right)\sin^{n-1}(t)\, dt d\xi}{\displaystyle \int_{\S^{n-1}}\int_0^{\Theta_1} |g|^2 \sin^{n-1}(t) \, dt d\xi}
\end{equation}
where the coefficient functions are
\begin{equation*}
\begin{gathered}
P :=  \sin^2(\Theta_2)\frac{\Theta_1^2}{\Theta_2^2}, \\
Q:= \frac{\sin^2(\Theta_2)}{\sin^2(\Theta_2 t / \Theta_1)}, \\
R :=   \frac{\sin^2(\Theta_2)(n-1)}{4 \Theta _2^2} \Bigl(\Theta_2^2
   ((n-1) \cot ^2(\Theta_2 t/\Theta _1)-2 \csc
   ^2(\Theta _2 t/\Theta _1)) \\ \hspace{7cm} -\Theta _1^2 ((n-1) \cot ^2(t) - 2 \csc ^2(t)) \Bigr).\\
\end{gathered}
\end{equation*}
We wish to recover the original form of the Rayleigh quotient by bounding $P, Q$, and $R$ above by the proper quantities. Namely, we want to show $P$ is bounded by $\sin^2(\Theta_1)$, $Q$ is bounded by $\sin^2(\Theta_1)/\sin^2(t)$, and $R$ is non-positive.

\subsubsection*{Bound on $P$}
As $\sin^2(x)/x^2$ is a strictly decreasing function up to $\pi$, we readily see that $P < \sin^2(\Theta_1)$.

\subsubsection*{Bound on $Q$}
Consider the function $\sin^2(x)/\sin^2(x t/\Theta_1)$ where $t < \Theta_1$. By taking a derivative in $x$, we see the function is strictly decreasing for $x \in (0, \pi/2)$ if the inequality $\tan(tx/\Theta_1) < \frac{t}{\Theta_1}\tan(x) $ holds. As $\tan(x)/x$ is a strictly increasing function, it follows that $Q < \frac{\sin^2(\Theta_1)}{\sin^2(t)}$.

\subsubsection*{Bound on $R$}
We will show that $R$ is negative. Since $\frac{\sin^2(\Theta_2)(n-1)}{4\Theta_2^2}$ is positive, we need only show negativity of the quantity within the parentheses. It is enough to show
\begin{equation}\label{cotdec}
x^2 \left((n-1) \cot ^2\left(\frac{x t}{\Theta _1}\right)-2 \csc
   ^2\left(\frac{x t}{\Theta _1}\right)\right) 
\end{equation}
is a decreasing function for $x \in (0, \pi/2)$. By differentiating with respect to $x$, we want to show
\begin{equation*}\label{cotneg}
(n-1)\cot^2y - (n-3)y\cot y\csc^2 y-2\csc^2 y
\end{equation*}
is negative for $y \in (0, \pi/2)$. Rewrite the above expression as
\[
(n-3)\csc^2(y)\left(1 - y \cot y\right) - (n-1). \
\] 
Since $y \cot y < 1$ on $(0, \pi/2)$ and $n \geq 3$, the expression is clearly negative and \eqref{cotdec} is decreasing.

With these bounds on the coefficient functions, it follows that
\[
\lambda_k(\Theta_2)\sin^2(\Theta_2) < \mathcal{R}_{\Theta_1}(g)\sin^2(\Theta_1).
\]
Since $g \in \text{span}\{\phi_1, \dots, \phi_k\}$, we know $\mathcal{R}_{\Theta_1}(g) \leq \lambda_k(\Theta_1)$
and thus $\lambda_k(\Theta)\sin^2(\Theta)$ is a strictly decreasing function on $(0, \pi/2)$. 

\subsection{Hyperbolic case}

The proof in this case is virtually identical. Hyperbolic trig functions replace their Euclidean counterparts.

For $\Theta < 0$, the Rayleigh quotient is
\[
\mathcal{R}_\Theta(f) =\frac{ \displaystyle\int_{\S^{n-1}}\int_0^{|\Theta|} \left(|\partial_1 f|^2 + \frac{1}{\sinh^2\theta}|\grad_{\S^{n-1}} f|^2\right)\sinh^{n-1}\theta \, d\theta d\xi}{ \displaystyle \int_{\S^{n-1}}\int_0^{|\Theta|} |f|^2\sinh^{n-1}\theta \, d\theta d\xi}.
\]

Let $\Theta_1 < \Theta_2 < 0$. We wish to show that $\lambda_k(\Theta_1)\sinh^2(\Theta_1) > \lambda_k(\Theta_2)\sinh^2(\Theta_2)$. We rework Lemma \ref{tildetransf} and define
\[
\tilde{f} (\theta, \xi) := f(|\Theta_1 |\theta/|\Theta_2|, \xi) \frac{\sqrt{\sinh^{n-1}(|\Theta_1| \theta/|\Theta_2|)}}{\sqrt{\sinh^{n-1}(\theta)}}.
\]
\begin{lemma}
Let $\Theta_1 < \Theta_2 < 0$ and $f \in H_0^1(C(\Theta_1))$. Then, $\tilde{f} \in H_0^1(C(\Theta_2))$ and the mapping $f \mapsto \tilde{f}$ is linear and preserves linear independence.
\end{lemma}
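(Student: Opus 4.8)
The plan is to follow the proof of Lemma \ref{tildetransf} essentially line by line, replacing the circular functions $\sin,\cos,\cot,\csc$ by their hyperbolic counterparts $\sinh,\cosh,\coth,\csch$. The map $f \mapsto \tilde f$ is the composition of the radial dilation $\theta \mapsto |\Theta_1|\theta/|\Theta_2|$ with multiplication by the fixed weight $\sqrt{\sinh^{n-1}(|\Theta_1|\theta/|\Theta_2|)/\sinh^{n-1}(\theta)}$, which extends smoothly and strictly positively to $\theta = 0$ with value $(|\Theta_1|/|\Theta_2|)^{(n-1)/2}$. Linearity of $f \mapsto \tilde f$ is therefore immediate. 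Injectivity holds because the dilation is invertible and the weight is nowhere zero, so $\sum_j a_j \tilde f_j = \widetilde{\sum_j a_j f_j} = 0$ forces $\sum_j a_j f_j = 0$; hence linear independence is preserved.

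First I would check $L^2$ membership. The substitution $s = |\Theta_1|\theta/|\Theta_2|$ cancels the weight against the volume element $\sinh^{n-1}(\theta)\,d\theta$ and gives the scaling identity
\[
\int_{C(\Theta_2)} |\tilde f|^2 \, dV = \frac{|\Theta_2|}{|\Theta_1|} \int_{C(\Theta_1)} |f|^2 \, dV,
\]
the exact hyperbolic counterpart of the $L^2$ identity displayed just before Lemma \ref{tildetransf}; in particular $\norm{\tilde f}_{L^2(C(\Theta_2))}$ is finite. The Dirichlet condition transfers because as $\theta \to |\Theta_2|$ the argument $|\Theta_1|\theta/|\Theta_2| \to |\Theta_1|$, where $f$ vanishes. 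Since the weight is smooth and the dilation maps functions compactly supported in $C(\Theta_1)$ to functions compactly supported in $C(\Theta_2)$, the transformation is a bounded linear map that is $H^1$-continuous and hence carries $H_0^1(C(\Theta_1))$ into $H_0^1(C(\Theta_2))$.

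The finiteness of $\norm{\grad_{\H^n}\tilde f}_{L^2(C(\Theta_2))}$ is the only real content, and I would obtain it from the hyperbolic analog of the Rayleigh-quotient computation that produces \eqref{finalRayl}. Expanding $\partial_1 \tilde f$ by the product rule yields, alongside the dilated radial derivative and the untouched angular term, a $|\tilde f|^2$-type contribution and a cross term proportional to $\bigl(\coth(t)|\Theta_1| - \coth(|\Theta_2| t/|\Theta_1|)|\Theta_2|\bigr)\, g\,\partial_1 g$. As in the spherical argument I would write $2g\,\partial_1 g = \partial_1(g^2)$ and integrate by parts in $t$, which converts the cross term into a further $|g|^2$ contribution and bounds $\norm{\grad_{\H^n}\tilde f}^2$ in terms of the finite quantities $\mathcal{R}_{\Theta_1}(f)$ and $\norm{\tilde f}^2$.

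The main obstacle, exactly as in the spherical case, is the integrability near the origin $t = 0$, where $\coth$ and $\csch$ are singular. The boundary term from the integration by parts vanishes at $t = |\Theta_1|$ by the Dirichlet condition and at $t = 0$ because $\sinh^{n-1}(t)\coth(t) = \sinh^{n-2}(t)\cosh(t) \to 0$ for $n \geq 3$; the same power count shows that the $|g|^2$ coefficient, carrying $\coth^2(t)$ and $\csch^2(t)$ singularities, is integrable against $\sinh^{n-1}(t)\,dt$ since $\sinh^{n-3}(t)$ is integrable near $0$ precisely when $n \geq 3$. This is the one place where the hypothesis $n \geq 3$ enters, mirroring its role in Lemma \ref{tildetransf}.
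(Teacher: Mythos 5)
Your proposal is correct and follows essentially the same route as the paper: the paper states this hyperbolic lemma without separate proof, deferring (as in the spherical Lemma \ref{tildetransf}) to the computation producing \eqref{finalRayl} with hyperbolic functions substituted, which is exactly what you carry out. Your power counts near $t=0$ (boundary term $\sinh^{n-2}(t)\cosh(t)\to 0$ and integrability of $\sinh^{n-3}(t)$ for $n\geq 3$) and the $L^2$ scaling identity with factor $|\Theta_2|/|\Theta_1|$ are the correct hyperbolic analogues of the paper's argument.
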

The proof of the theorem proceeds the same, making necessary modifications, and we arrive at the bound 
\begin{equation*}
\lambda_k(\Theta_2)\sinh^2(\Theta_2) \leq \frac{\displaystyle \int_{\S^{n-1}}\int_0^{|\Theta_1|} \left( P|\partial_1 g|^2 +  Q|\grad_{\S^{n-1}}g|^2 + R|g|^2 \right)\sinh^{n-1}(t)\, dt d\xi}{\displaystyle \int_{\S^{n-1}}\int_0^{|\Theta_1|} |g|^2 \sinh^{n-1}(t) \, dt d\xi}
\end{equation*}
where the new hyperbolic coefficient functions are
\begin{equation*}
\begin{gathered}
P :=  \sinh^2(|\Theta_2|)\frac{|\Theta_1|^2}{|\Theta_2|^2}, \\
Q:= \frac{\sinh^2(|\Theta_2|)}{\sinh^2(|\Theta_2| t / |\Theta_1|)}, \\
R :=   \frac{\sinh^2(|\Theta_2|)(n-1)}{4 |\Theta _2|^2} \Bigl(|\Theta_2|^2
   \Bigl((n-1) \coth ^2(|\Theta_2| t/|\Theta _1|)-2 \csch
   ^2(|\Theta| _2 t/|\Theta| _1)\Bigr)\\ \hspace{7.5cm}-|\Theta _1|^2 \Bigl((n-1) \coth ^2(t) - 2 \csch ^2(t)\Bigr) \Bigr).\\
\end{gathered}
\end{equation*}
The bounds on these coefficient functions follow from the fact that all of $\sinh^2(x)/x^2$, $\sinh^2(x)/\sinh^2(xt/|\Theta_1|)$ and $y^2 \left((n-1) \coth ^2y -2 \csch
^2y)\right) $ are increasing functions on $(0, \infty)$, proven similarly as in the spherical case. The rest of the proof is identical, and we conclude that $\lambda_k(\Theta)\sinh^2(\Theta)$ decreases strictly on $(-\infty, 0)$.

\subsection*{Continuity and limits as $\Theta \to -\infty, 0, \pi$}
Continuity away from $\Theta = 0$ follows from the fact $\lambda_k(\Theta)$, $\sin^2(\Theta)$ and $\sinh^2(\Theta)$ are all continuous. Near $\Theta = 0$, $\sin \Theta$ is well-approximated by $\Theta$, and so the Rayleigh quotient for $\lambda_k(\Theta)\sin^2(\Theta)$ is uniformly approximated near $\Theta = 0$ by
\[
\frac{ \Theta^2 \displaystyle\int_{\S^{n-1}}\int_0^\Theta \left(|\partial_1 f|^2 + \frac{1}{\theta^2}|\grad_{\S^{n-1}} f|^2\right)\theta^{n-1} \, d\theta d\xi}{ \displaystyle \int_{\S^{n-1}}\int_0^\Theta |f|^2\theta^{n-1} \, d\theta d\xi}.
\]
If $\lambda_k(\B^n)$ is the eigenvalue of the Euclidean Laplacian for $\B^n$, then the above quantity is the Rayleigh quotient for $\Theta^2 \lambda_k(\Theta \B^n)$ in spherical coordinates. By Euclidean scale invariance, $\Theta^2 \lambda_k(\Theta \B^n) = \lambda_k(\B^n)$. Apply the corresponding argument to $\sinh |\Theta|$ and continuity at the origin follows.

When $\Theta > \pi/2$, then $0 \leq \lambda_k(\Theta)\sin^2(\Theta) \leq \lambda_k(\pi/2)\sin^2(\Theta)$ by Dirichlet domain monotonicity and hence $\lambda_k(\Theta)\sin^2(\Theta) \to 0$ as $\Theta \to \pi$

As $\Theta \to -\infty$, it is sufficient to show $\lambda_1(\Theta)\sinh^2(\Theta) \to \infty$. From Borisov and Freitas \cite[Theorem 3.3]{BF17}, we have $\lambda_1(\Theta) = \pi^2/\Theta^2 + 1$ for $n=3$ and the statement follows easily there. For $n \geq 4$, they also provide the lower bound
\[
\lambda_1(\Theta) \geq \frac{j^2_{\frac{n-2}{2}, 1}}{\Theta^2} + \frac{n(n-1)}{6}
\]
and again the result is clear.

 \section*{Acknowledgments}
The author was supported by National Science Foundation award {\#}2246537 to Richard Laugesen.

\appendix
\section{Coordinate representations}\label{coords}
\begin{center}
\begin{tabular}{l l}
   Spherical  &  Hyperbolic\\[2ex]
 $
\grad_{\S ^n} = \partial_\theta \hat{e}_\theta + \displaystyle\frac{1}{\sin \theta} \grad_{\S ^{n-1}}
$ & $\grad_{\H ^n} = \partial_\theta \hat{e}_\theta + \displaystyle\frac{1}{\sinh \theta} \grad_{\S ^{n-1}}$ \\ [3ex]
$\lapl_{\S^ n} = \displaystyle\frac{1}{\sin^{n-1}\theta}\partial_\theta(\sin^{n-1}\theta \,\partial_\theta) + \displaystyle\frac{1}{\sin^2\theta}\lapl_{\S ^{n-1}}$ & $\lapl_{\H^ n} = \displaystyle\frac{1}{\sinh^{n-1}\theta}\partial_\theta(\sinh^{n-1}\theta \,\partial_\theta) + \displaystyle\frac{1}{\sinh^2\theta}\lapl_{\S ^{n-1}}$ \\ [3ex]
$dV = \sin^{n-1}\theta \, d\theta d\xi$ & $dV = \sinh^{n-1}\theta \, d\theta d\xi$
\end{tabular}
\end{center}

\section{Numerics}
The plots in this section accomplish the following:

\begin{itemize}
    \item Illustrate Theorem \ref{main} in Figure \ref{dirichletlambda1graph}.
    
    \item Numerically support Conjectures \ref{neumannconj}, \ref{surfaceareaconjdirch}, and \ref{surfaceareaconjneum} in Figures \ref{neumannlambdagraph}, \ref{surfacemonofigs}, and \ref{neummanincreasingfig} respectively. 
    
    \item Highlight non-monotonicity of certain eigenvalue functionals in Figures \ref{geononmono} and \ref{stereononmono}. 
\end{itemize}
The plots were generated by using the \texttt{NDEigenvalues} command in Wolfram Mathematica 14.1.

\begin{figure}[htbp]
    \centering
    \includegraphics[width=0.49\linewidth]{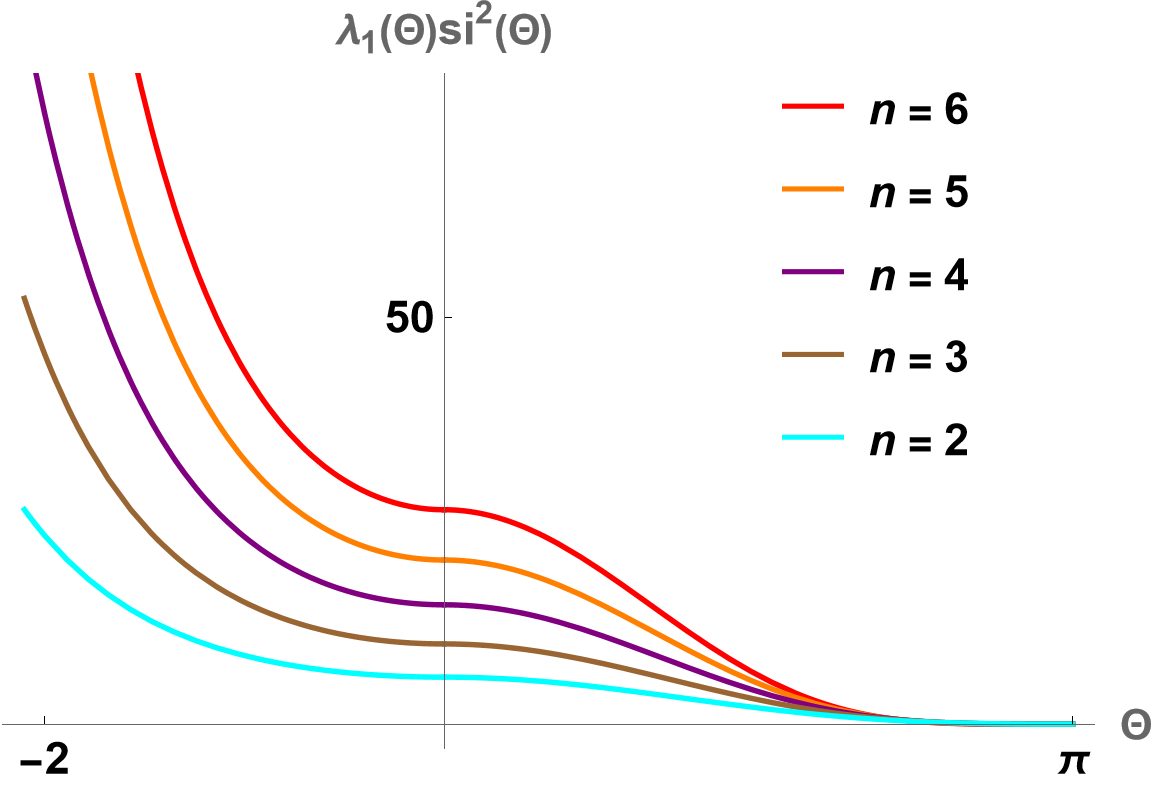}
    \includegraphics[width=0.49\linewidth]{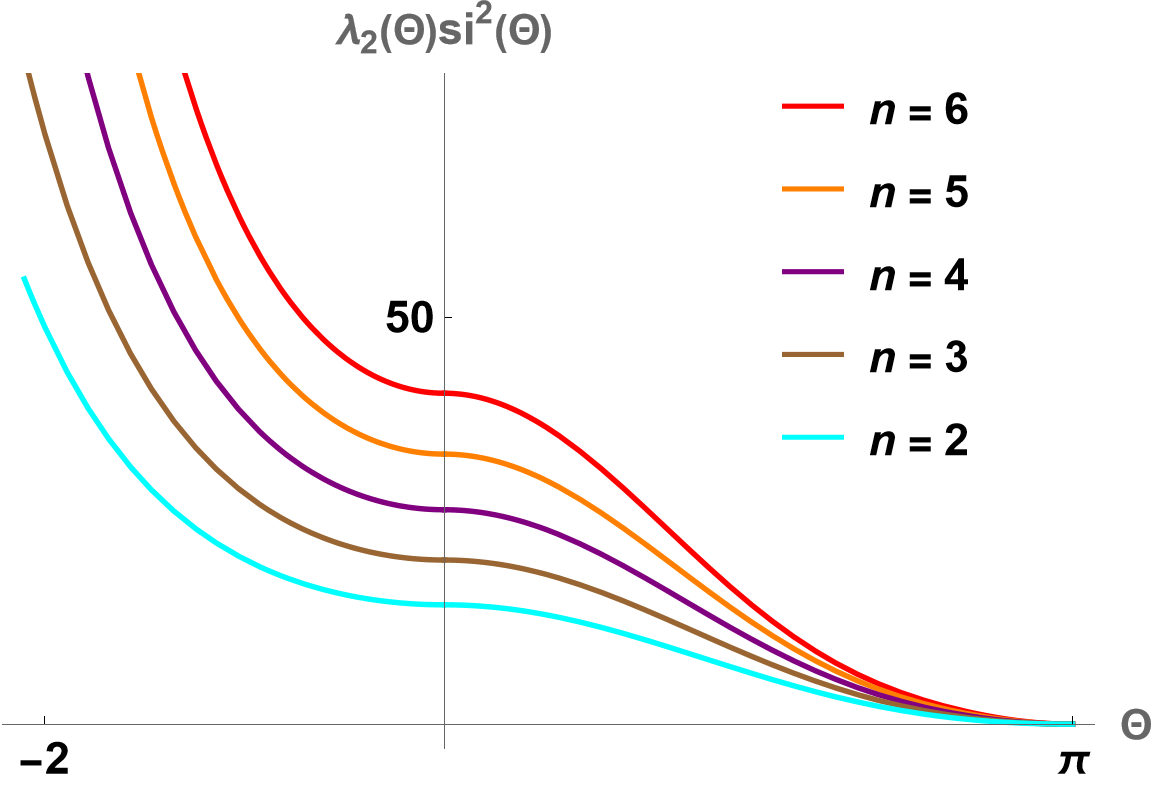}

    \caption{Graphs of the first and second Dirichlet eigenvalues as a function of the aperture (or geodesic radius) $\Theta$ in different dimensions, with $\si(\Theta)$ scaling. Monotonicity in dimension $n = 2$ was proven by Langford and Laugesen in \cite[Theorem 1(iii)]{LL22}. The higher dimensional case $n \geq 3$ is proven in this paper; see Theorem \ref{main}. Both results handle $\lambda_k$ for all $k$.} 
    \label{dirichletlambda1graph}
    
\end{figure}

\begin{figure}[]
    \centering
    
\includegraphics[width=0.9\linewidth]{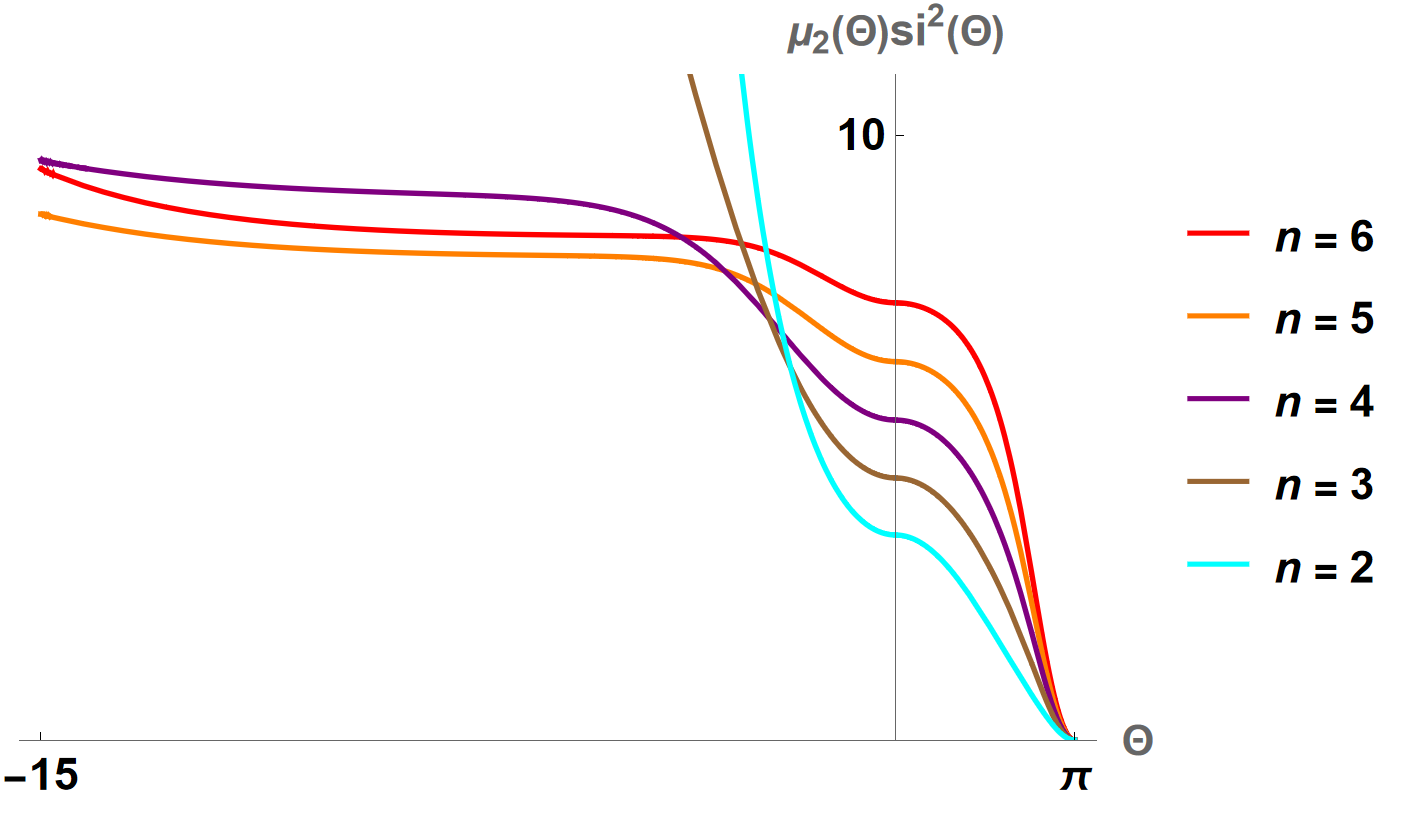}
    \caption{Graphs of the first non-zero Neumann eigenvalue as a function of $\Theta$ in different dimensions, with $\si^2(\Theta)$ scaling. There is some numerical instability for $\Theta < -15$. Monotonicity in dimension $n=2$ was proven by Langford and Laugesen \cite[Theorem 1(ii)]{LL22}. The higher dimensions are not proven; see Conjecture \ref{neumannconj}.}
    \label{neumannlambdagraph}
    
\end{figure}

\begin{figure}[]
\centering

\includegraphics[width=0.6\linewidth]{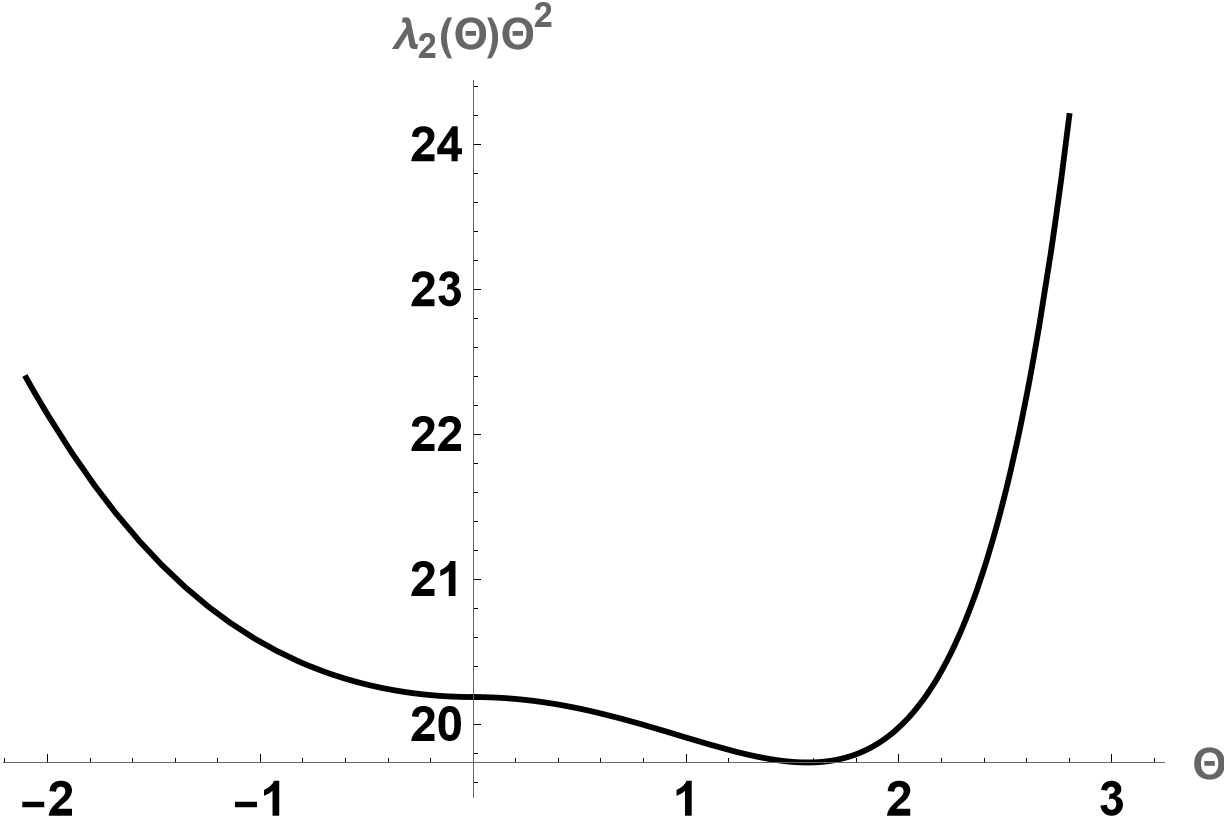}
    \caption{Graph of the second Dirichlet eigenvalue as a function of $\Theta$ in dimension $n = 3$. The scaling factor $\Theta^2$ is the square of geodesic radius. The functional here is non-monotonic. (For $n = 2$, see Langford and Laugesen \cite[Theorem 4]{LL22}.) See Table \ref{scaletab}.}
    \label{geononmono}
\end{figure}

\begin{figure}[]
\centering
\includegraphics[width=0.6\linewidth]{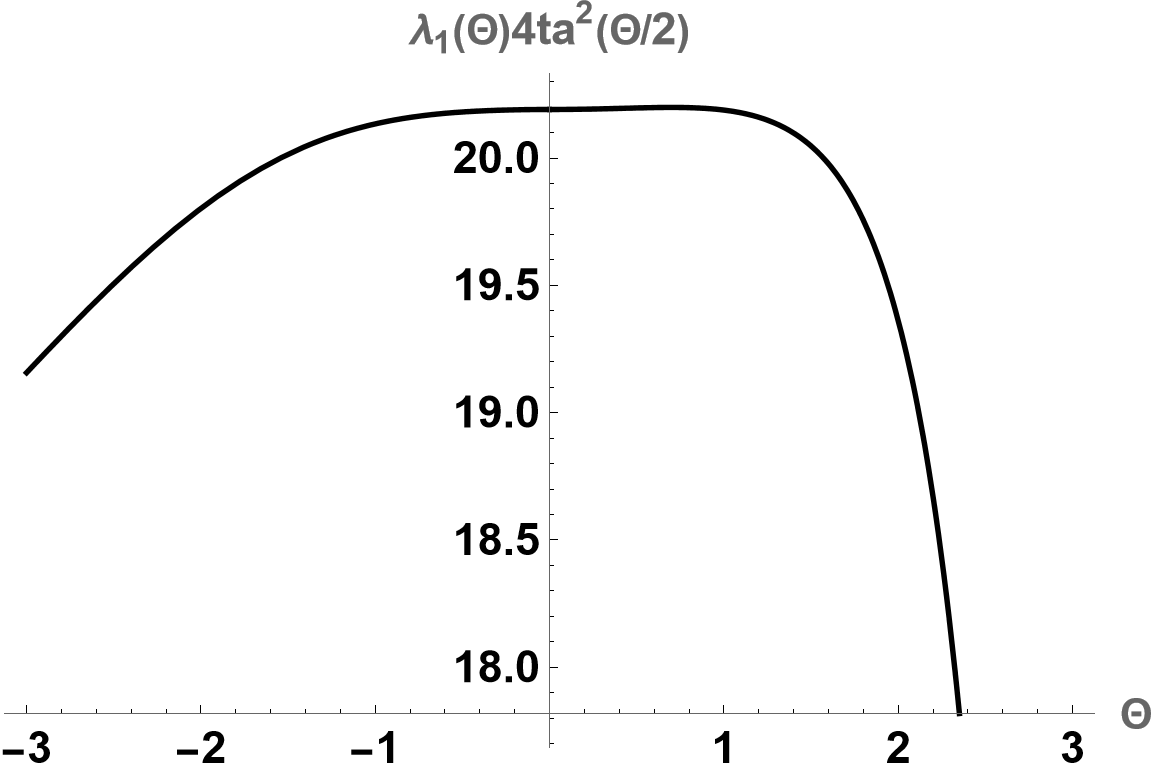}
    \caption{Graph of the first Dirichlet eigenvalue as a function of $\Theta$ in dimension $n = 5$. The scaling factor $4\ta^2(\Theta/2)$ is stereographic radius squared, and the functional here is non-monotonic. See Table \ref{scaletab}.}
    \label{stereononmono}
\end{figure}

\begin{figure}[]
    \centering
    \includegraphics[width=0.9\linewidth]{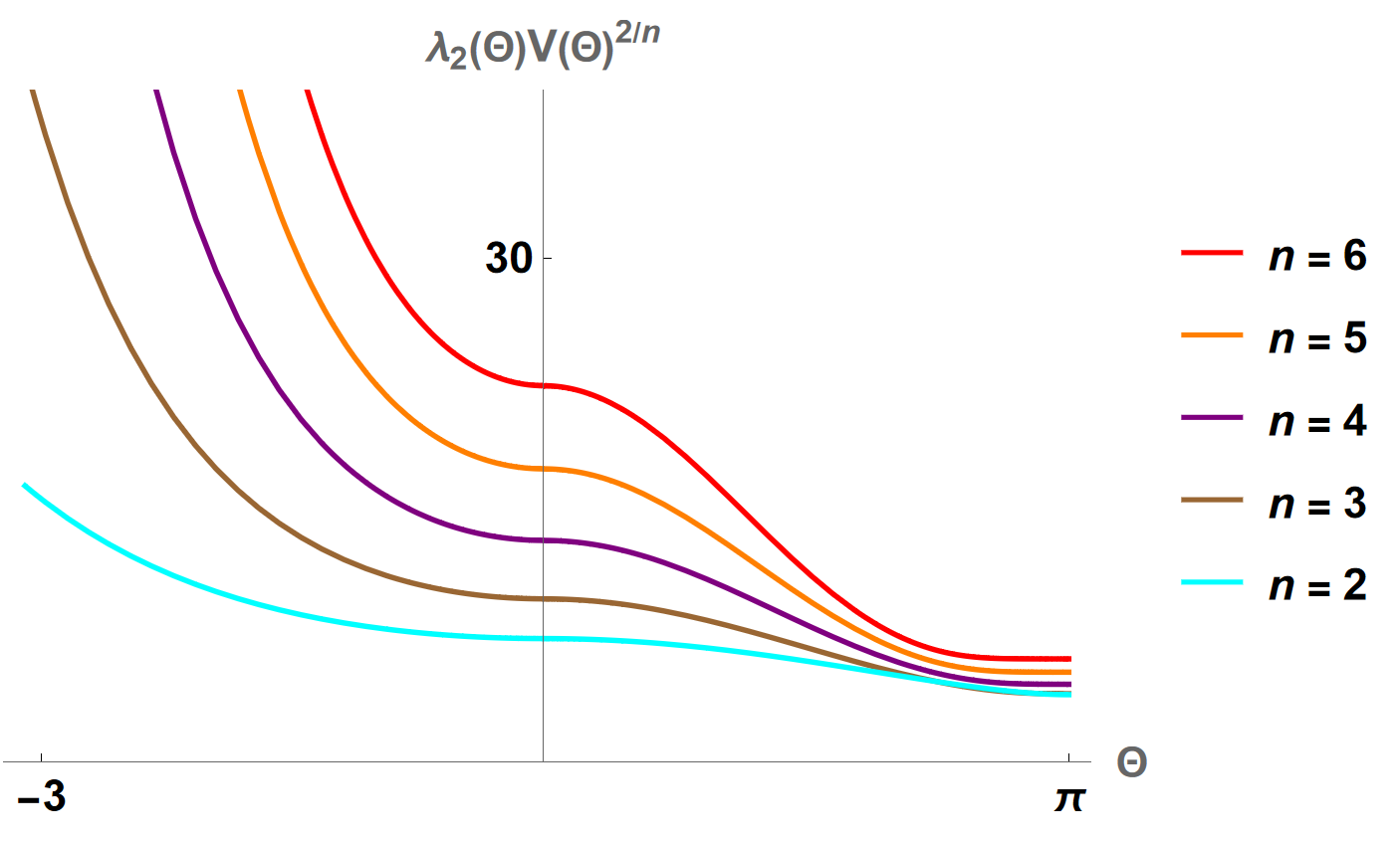}
    
    \caption{Graphs of the second Dirichlet eigenvalue as a function of $\Theta$ in multiple dimensions. The scaling factor is volume $V(\Theta) = c_n\int_0^{|\Theta|} \si(\theta)^{n-1}\, d \theta$.{} Monotonicity in dimension $n = 2$ was proven by Langford and Laugesen in \cite[Theorem 4]{LL22} for the interval $(-\infty, 0).$ The rest of the interval and higher dimensions are not proven; see Conjecture \ref{surfaceareaconjdirch}.}
    \label{surfacemonofigs} 
\end{figure}

\begin{figure}[]
    \centering
    \includegraphics[width=0.9\linewidth]{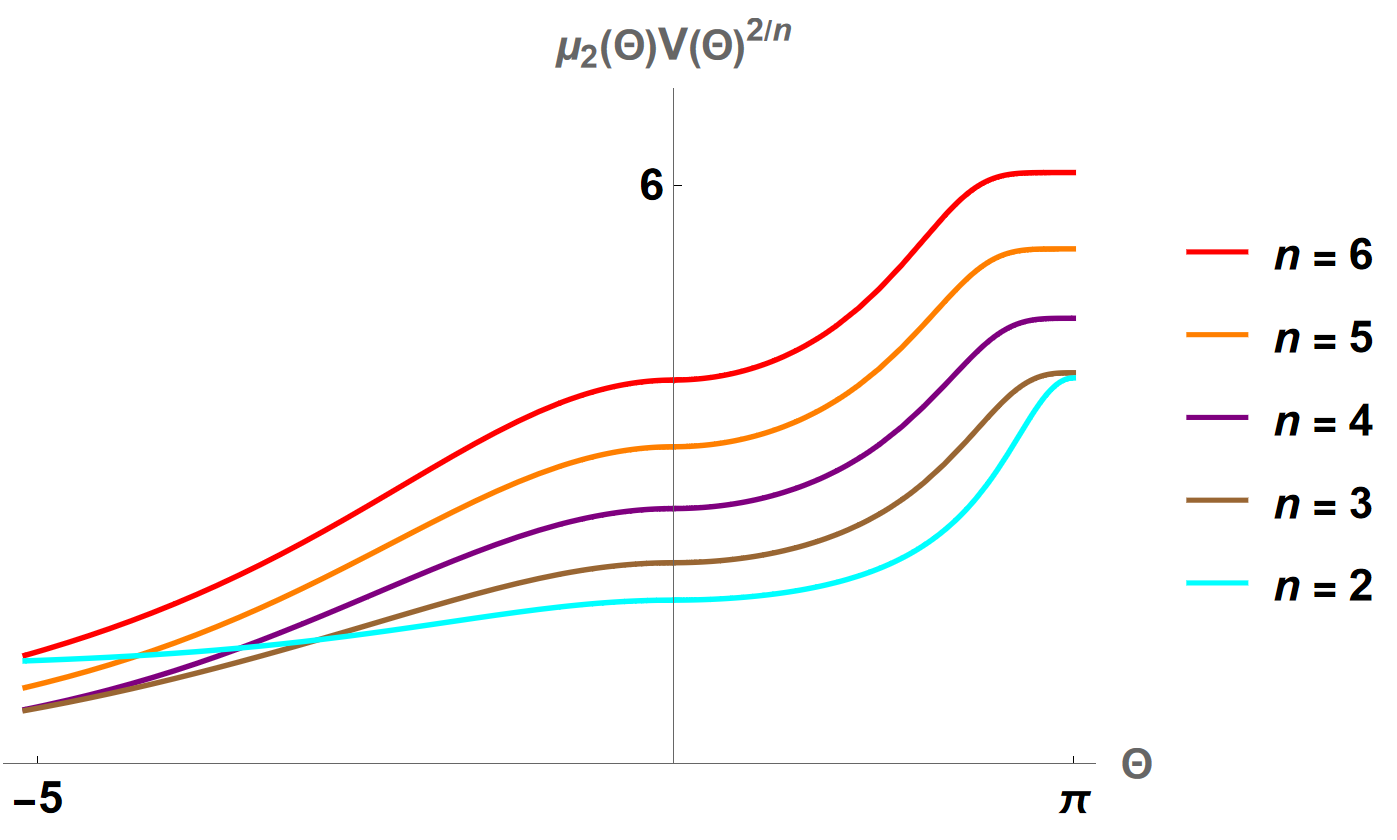}
    
    \caption{Graphs of the first non-zero Neumann eigenvalue as a function of $\Theta$ in multiple dimensions, with $V(\Theta)$ scaling. Monotonicity in dimension $n = 2$ was proven by Langford and Laugesen in \cite[Theorem 2]{LL22}. The higher dimensions are not proven; see Conjecture \ref{surfaceareaconjneum}.}
    \label{neummanincreasingfig}
\end{figure}


\begin{thebibliography}{99}
\bibitem{AB95}
M. S. Ashbaugh and R. D. Benguria.
\emph{Sharp upper bound to the first nonzero Neumann eigenvalue
for bounded domains in spaces of constant curvature.}
 J. London Math. Soc. (2)
\textbf{52} (1995), 402-–416.

\bibitem{AB01}
M. S. Ashbaugh and R. D. Benguria.
\emph{A sharp bound for the ratio of the first two Dirichlet eigenvalues of a domain in a hemisphere of $S^n$}.
Trans. Amer. Math. Soc. \textbf{353} (2001), 1055--1087.

\bibitem{BKN10}
C. Bandle, Y. Kabeya, and H. Ninomiya.
\emph{Imperfect bifurcations in nonlinear elliptic equations on spherical caps.} Commun. Pure Appl. Anal. \textbf{9} (2010), no.5, 1189--1208.

\bibitem{BKN19}
C. Bandle, Y. Kabeya, and H. Ninomiya. 
\emph{Bifurcating solutions of a nonlinear elliptic Neumann problem on large spherical caps.} Funkcial. Ekvac. \textbf{62} (2019), no. 3, 285--317


\bibitem{BL07}
R. D. Benguria and H. Linde.
\emph{A second eigenvalue bound for the Dirichlet Laplacian in hyperbolic space}.
Duke Math. J. \textbf{140} (2007), 245--279.

\bibitem{B22}
S. Berge. \emph{Eigenvalues of the Laplacian on balls with spherically symmetric metrics.} Anal. Math. Phys. \textbf{13}, 14 (2023). 

\bibitem{BF17}
D. Borisov and P. Freitas.
\emph{The spectrum of geodesic balls on spherically symmetric manifolds.}
Comm. Anal. Geom. \textbf{25}, no. 3 (2017), 507–-544.

\bibitem{BFK17}
D. Bucur, P. Freitas, and J. Kennedy.
\emph{The Robin problem}.
In: Shape optimization and spectral theory (pp. 78--119).
Warsaw: De Gruyter Open. (2017).

\bibitem{C75}
S. Y. Cheng. \emph{Eigenfunctions and eigenvalues of Laplacian}. In: Differential geometry
(Proc. Sympos. Pure Math., Vol. XXVII, Part 2, Stanford Univ., Stanford, CA, 1973). (1975).

\bibitem{C84}
I. Chavel.
\emph{Eigenvalues in Riemannian geometry.}
Pure Appl. Math., \textbf{115}
Academic Press, Inc., Orlando, FL, (1984).

\bibitem{E10}
L. Evans.
\emph{Partial differential equations.}
Second edition
Grad. Stud. Math., 19. American Mathematical Society, Providence, RI, (2010).

\bibitem{GP17}
A. Girouard and I. Polterovich.
\emph{Spectral geometry of the Steklov problem (survey article)}.
J. Spectr. Theory \textbf{7} (2017), no. 2, 321--359.

\bibitem{FK15}
P. Freitas and D. Krejčiřík.
\emph{The first Robin eigenvalue with negative boundary parameter.} Adv. Math. \textbf{280} (2015), 322--339.

\bibitem{KL18}
D. Krejcirík and V. Lotoreichik.
\emph{Optimisation of the lowest Robin eigenvalue in the exterior of a compact set}.
J. Convex Anal. \textbf{25}, no. 1 (2018), 319--337.

\bibitem{KL20}
D. Krejčiřík and V. Lotoreichik.
\emph{Optimisation of the lowest Robin eigenvalue in the exterior of a compact set, II: non-convex domains and higher dimensions}.
Potential Anal. \textbf{52} (2020), 601--614.
\bibitem{KL23}
D. Krejcirík and V. Lotoreichik.
\emph{Optimisation and monotonicity of the second Robin eigenvalue on a planar exterior domain}.
\href{https://arxiv.org/abs/2307.14286}{arXiv:2307.14286}.(2023).

\bibitem{LL22}
J. J. Langford and R. S. Laugesen.
\emph{Scaling inequalities for spherical and hyperbolic eigenvalues}.
J. Spectr. Theory \textbf{13} (2023), no. 1, 263--296.

\bibitem{L12}

R. S. Laugesen. \emph{Spectral Theory of Partial Differential Equations --- Lecture Notes.}	\href{https://arxiv.org/abs/1203.2344}{arXiv:1203.2344} (2017).

\bibitem{L17}
G. Leoni.
\emph{A first course in Sobolev spaces.
Second edition}
Grad. Stud. Math., 181
American Mathematical Society, Providence, RI, (2017).

\bibitem{S11}
J. Segura.
\emph{Bounds for ratios of modified Bessel functions and associated Turán-type inequalities}.
J. Math. Anal. Appl. \textbf{374}, no. 2 (2011), 516--528.

\bibitem{DLMF}
\emph{NIST Digital Library of Mathematical Functions.}
W. J. Olver, A. B. Olde Daalhuis, D. W. Lozier, B. I. Schneider, R. F. Boisvert, C. W. Clark, B. R. Miller, B. V. Saunders, H. S. Cohl, and M. A. McClain, eds.
\url{https://dlmf.nist.gov/}.


\end{thebibliography}
\end{document}